\documentclass{tac} 

\usepackage[colorlinks,linkcolor=darkblue,citecolor=darkblue,urlcolor=darkblue,breaklinks=true,pagebackref]{hyperref}
\usepackage[centertags,sumlimits,intlimits,namelimits,reqno]{amsmath}
\usepackage{latexsym,amsfonts,amssymb,exscale,enumerate,breakcites}
\usepackage{tikz}
\usepackage{float,multirow,array,multicol}
\usepackage[all,cmtip,2cell]{xy}

\usepackage{stmaryrd,mathrsfs}
\usepackage{anyfontsize}

\usepackage{tikz-cd}

\definecolor{darkblue}{rgb}{0,0,0.7}

\newcommand\maps{{\colon}}
\newcommand{\define}[1]{{\bf \boldmath #1}}



  \newcommand{\R}{{\mathbb{R}}}
  \newcommand{\hooklongrightarrow}{\lhook\joinrel\longrightarrow}
  \newcommand{\FinSet}{\mathrm{FinSet}}
  \newcommand{\Set}{\mathrm{Set}}
  \newcommand{\opp}{\mathrm{op}}

  \newcommand{\FinVect}{\mathrm{FinVect}}
  \newcommand{\Vect}{\mathrm{Vect}}
  \newcommand{\LinRel}{\mathrm{LinRel}}
  \DeclareMathOperator\corel{{Corel}}
  \DeclareMathOperator\cospan{{Cospan}}

\newcommand{\mc}{\mathcal}
\newcommand{\ot}{\otimes}
\newcommand{\im}{\mathrm{Im}\,}

\input{tikz}
\hfuzz = 9pt   


  \author{Brendan Fong}

  \thanks{I thank John Baez, Sam Staton, and Aleks Kissinger for useful
    conversations. I also gratefully acknowledge the support of the Queen
    Elizabeth Scholarships, Oxford, and the Basic Research Office of the ASDR\&E
    through ONR N00014-16-1-2010.
  }

  \address{Department of Electrical and Systems Engineering\\
    University of Pennsylvania \\
    United States of America
  }

  \title{Decorated corelations}

  \copyrightyear{2017}

  \keywords{decorated cospan, corelation, Frobenius monoid, 
  hypergraph category, well-supported compact closed category}
  \amsclass{18C10, 18D10}
  \eaddress{fo@seas.upenn.edu}

\begin{document}   

\maketitle

\begin{abstract}
  Let $\mathcal C$ be a category with finite colimits, and let $(\mc E,\mc M)$
  be a factorisation system on $\mc C$ with $\mc M$ stable under pushouts.
  Writing $\mc C;\mc M^\opp$ for the symmetric monoidal category with morphisms
  cospans of the form $\stackrel{c}\to \stackrel{m}\leftarrow$, where $c \in \mc
  C$ and $m \in \mc M$, we give method for constructing a category from a
  symmetric lax monoidal functor $F\maps (\mc C; \mc M^\opp,+) \to
  (\Set,\times)$. A morphism in this category, termed a \emph{decorated
  corelation}, comprises (i) a cospan $X \to N \leftarrow Y$ in $\mc C$ such
  that the canonical copairing $X+Y \to N$ lies in $\mc E$, together with (ii)
  an element of $FN$.  Functors between decorated corelation categories can be
  constructed from natural transformations between the decorating functors $F$.
  This provides a general method for constructing hypergraph
  categories---symmetric monoidal categories in which each object is a special
  commutative Frobenius monoid in a coherent way---and their functors. Such
  categories are useful for modelling network languages, for example circuit
  diagrams, and such functors their semantics.
\end{abstract}

\section{Introduction}

Consider a circuit diagram.
\[
\resizebox{.35\textwidth}{!}{
    \tikzset{every path/.style={line width=1.1pt}}
  \begin{tikzpicture}[circuit ee IEC, set resistor graphic=var resistor IEC graphic]
	\begin{pgfonlayer}{nodelayer}
		\node [style=dot] (0) at (-2.5, 0.75) {};
		\coordinate (1) at (-2, -2) {};
		\coordinate (2) at (1.5, 2) {};
		\coordinate (A) at (-2, 0.75) {};
		\node [style=dot] (4) at (2, 1.25) {};
		\coordinate (B) at (1.5, 0.75) {};
		\node [style=dot] (6) at (2, 0.25) {};
		\coordinate (ub) at (0.75, 1) {};
		\coordinate (la) at (-1.25, 0.5) {};
		\coordinate (C) at (-0.25, -1.375) {};
		\coordinate (lb) at (0.75, 0.5) {};
		\coordinate (ua) at (-1.25, 1) {};
	\end{pgfonlayer}
	\begin{pgfonlayer}{edgelayer}
		\draw (0.center) to (A);
		\draw (6) to (B);
		\draw (4) to (B);
    \path (A) edge (ua);
    \path (A) edge (la);
    \path (B) edge (ub);
    \path (B) edge (lb);
    \path (ua) edge  [resistor] (ub);
    \path (la) edge  [resistor] (lb);
    \path (A) edge  [resistor]  (C);
    \path (C) edge  [resistor]  (B);
	\end{pgfonlayer}
	\begin{pgfonlayer}{background}
	  \filldraw [fill=black!5!white, draw=black!40!white] (1) rectangle (2);
	\end{pgfonlayer}
\end{tikzpicture}
}
\]
We often view such diagrams atomically, representing the complete physical
system built as specified. Yet the very process of building such a system
involves assembling it from its parts, each of which we might diagram in the
same way. The goal of this paper is to develop formal category-theoretic tools
for describing and interpreting this process of assembly.

As we wish to compose circuits, we model them as morphisms in a category. One
method for realising the above circuit as a morphism is to use decorated cospans
\cite{Fon15}. To do so, consider the part inside the shaded area as a graph with
three vertices and the four resistors as edges. Writing $n$ for a set of $n$
elements, we have functions $1 \to 3$ and $2 \to 3$ describing how the terminals
$\bullet$ on the left and the right respectively are attached to the vertex set
$3$ of this graph. Thus the above circuit can be modelled as a \emph{cospan} of
functions $1 \to 3 \leftarrow 2$, \emph{decorated} by the aforementioned graph
on the apex $3$ of this cospan.

While often useful for syntactic purposes, a significant limitation of using
cospans alone is that composition of cospans indiscrimately accumulates
information. For example, here is a depiction of the composite of five circuits
using decorated cospans:
\[
\begin{aligned}
\resizebox{.45\textwidth}{!}{
    \tikzset{every path/.style={line width=1.1pt}}
  \begin{tikzpicture}[circuit ee IEC, set resistor graphic=var resistor IEC graphic]
	\begin{pgfonlayer}{nodelayer}
		\node [style=dotbig] (0) at (-2.25, 4) {};
		\coordinate (1) at (-1.75, 1.25) {};
		\coordinate (2) at (1.75, 5.25) {};
		\coordinate (3) at (-1.75, 4) {};
		\node [style=dotbig] (4) at (2.25, 4.5) {};
		\coordinate (5) at (1.75, 4) {};
		\node [style=dotbig] (6) at (2.25, 3.5) {};
		\coordinate (7) at (1, 4.25) {};
		\coordinate (8) at (-1, 3.75) {};
		\coordinate (9) at (0, 1.75) {};
		\coordinate (10) at (1, 3.75) {};
		\coordinate (11) at (-1, 4.25) {};
		\node [style=dotbig] (12) at (7.25, 4) {};
		\coordinate (13) at (-3, -1.5) {};
		\coordinate (14) at (6, 2) {};
		\coordinate (15) at (3.5, 2) {};
		\coordinate (16) at (4.75, -0) {};
		\coordinate (17) at (3, 3.5) {};
		\coordinate (18) at (3, 4.5) {};
		\node [style=dotbig] (19) at (2.5, -1.5) {};
		\coordinate (20) at (6.75, 4) {};
		\node [style=dotbig] (21) at (7.25, -1.5) {};
		\node [style=dotbig] (22) at (-6.5, -1) {};
		\node [style=dotbig] (23) at (7.25, 1.5) {};
		\node [style=dotbig] (24) at (-6.5, 4.5) {};
		\node [style=dotbig] (25) at (-6.5, 3) {};
		\coordinate (26) at (-6, -2) {};
		\coordinate (27) at (-3, 5.25) {};
		\coordinate (28) at (1.75, 1) {};
		\coordinate (29) at (-1.75, -2) {};
		\coordinate (30) at (3, -2) {};
		\coordinate (31) at (-6, 4.5) {};
		\coordinate (32) at (-6, 3) {};
		\coordinate (33) at (-3, 4) {};
		\coordinate (34) at (6.75, 2.5) {};
		\coordinate (35) at (3, 2.75) {};
		\coordinate (36) at (6.75, 5.25) {};
		\node [style=dotbig] (37) at (7.25, 0.5) {};
		\node [style=dotbig] (38) at (-2.25, -1.5) {};
		\node [style=dotbig] (39) at (-2.5, 4) {};
		\node [style=dotbig] (40) at (2.5, 4.5) {};
		\node [style=dotbig] (41) at (2.5, 3.5) {};
		\node [style=dotbig] (42) at (-2.25, 0.5) {};
		\node [style=dotbig] (43) at (-2.5, -1.5) {};
		\node [style=dotbig] (44) at (-2.5, 0.5) {};
		\node [style=dotbig] (45) at (2.25, -1.5) {};
		\node [style=dotbig] (46) at (2.25, -0) {};
		\node [style=dotbig] (47) at (2.5, -0) {};
		\node [style=dotbig] (48) at (-2.5, -0.5) {};
		\node [style=dotbig] (49) at (-2.25, -0.5) {};
		\coordinate (50) at (-6, -1) {};
		\coordinate (51) at (-5, -0) {};
		\coordinate (52) at (-3, 0.5) {};
		\coordinate (53) at (-3, -0.5) {};
		\coordinate (54) at (-1.75, 0.5) {};
		\coordinate (55) at (-1.75, -0.5) {};
		\coordinate (56) at (0, -0) {};
		\coordinate (57) at (1.75, -0) {};
		\coordinate (58) at (3, -0) {};
		\coordinate (59) at (-5.75, 1.25) {};
		\coordinate (60) at (4.75, 1) {};
		\coordinate (61) at (6.75, 1.5) {};
		\coordinate (62) at (6.75, 0.5) {};
		\coordinate (63) at (-1.75, -1.5) {};
		\coordinate (64) at (1.75, -1.5) {};
		\coordinate (65) at (3, -1.5) {};
		\coordinate (66) at (6.75, -1.5) {};
		\coordinate (67) at (-5.5, 1.75) {};
		\coordinate (68) at (-5.25, 1) {};
		\coordinate (69) at (-3.75, 2.5) {};
		\coordinate (70) at (-3.5, 1.75) {};
		\coordinate (71) at (-3.25, 2.25) {};
	\end{pgfonlayer}
	\begin{pgfonlayer}{edgelayer}
		\path (50) edge [resistor] (13);
		\draw (24) to (31);
		\draw (61) to (23);
		\draw (62) to (37);
		\draw (25) to (32);
		\draw (22) to (50);
		\draw (33) to (39);
		\draw (0) to (3);
		\draw (52) to (44);
		\draw (42) to (54);
		\draw (53) to (48);
		\draw (49) to (55);
		\path (31) edge [resistor] (33);
		\path (32) edge [resistor] (33);
		\draw (3) to (11);
		\draw (3) to (8);
		\path (11) edge [resistor] (7);
		\path (8) edge [resistor] (10);
		\draw (7) to (5);
		\draw (10) to (5);
		\draw (5) to (4);
		\draw (5) to (6);
		\draw (41) to (17);
		\draw (40) to (18);
		\path (18) edge [resistor] (20);
		\path (17) edge [resistor] (20);
		\draw (20) to (12);
		\path (15) edge [resistor] (14);
		\path (54) edge [resistor] (56);
		\path (55) edge [resistor] (56);
		\path (56) edge [resistor] (57);
		\path (51) edge [resistor] (52);
		\path (51) edge [resistor] (53);
		\draw (57) to (46);
		\draw (47) to (58);
		\path (58) edge [resistor] (16);
		\path (3) edge [resistor] (9);
		\path (9) edge [resistor] (5);
		\draw (13) to (43);
		\path (60) edge [resistor] (61);
		\path (60) edge [resistor] (62);
		\draw (38) to (63);
		\path (63) edge [resistor] (64);
		\draw (64) to (45);
		\draw (19) to (65);
		\path (65) edge [resistor] (66);
		\draw (66) to (21);
		\path (67) edge [resistor] (69);
		\path (68) edge [resistor] (70);
		\draw (59) to (67);
		\draw (59) to (68);
		\draw (69) to (71);
		\draw (71) to (70);
	\end{pgfonlayer}
	\begin{pgfonlayer}{background}
	  \filldraw [fill=black!5!white, draw=black!40!white] (1) rectangle (2);
	  \filldraw [fill=black!5!white, draw=black!40!white] (26) rectangle (27);
	  \filldraw [fill=black!5!white, draw=black!40!white] (29) rectangle (28);
	  \filldraw [fill=black!5!white, draw=black!40!white] (35) rectangle (36);
	  \filldraw [fill=black!5!white, draw=black!40!white] (30) rectangle (34);
	\end{pgfonlayer}
\end{tikzpicture}
}
\end{aligned}
\quad
\mapsto
\enspace
\begin{aligned}
\resizebox{.4\textwidth}{!}{
    \tikzset{every path/.style={line width=1.1pt}}
  \begin{tikzpicture}[circuit ee IEC, set resistor graphic=var resistor IEC graphic]
	\begin{pgfonlayer}{nodelayer}
		\coordinate (0) at (-1.75, 4) {};
		\coordinate (1) at (1.75, 4) {};
		\coordinate (2) at (1, 4.25) {};
		\coordinate (3) at (-1, 3.75) {};
		\coordinate (4) at (0, 1.75) {};
		\coordinate (5) at (1, 3.75) {};
		\coordinate (6) at (-1, 4.25) {};
		\node [style=dotbig] (7) at (6.25, 4) {};
		\coordinate (8) at (5, 2) {};
		\coordinate (9) at (2.5, 2) {};
		\coordinate (10) at (3.75, -0) {};
		\coordinate (11) at (2.5, 3.75) {};
		\coordinate (12) at (2.5, 4.25) {};
		\coordinate (13) at (5.75, 4) {};
		\node [style=dotbig] (14) at (6.25, -1.5) {};
		\node [style=dotbig] (15) at (-5.5, -1) {};
		\node [style=dotbig] (16) at (6.25, 1.5) {};
		\node [style=dotbig] (17) at (-5.5, 4.5) {};
		\node [style=dotbig] (18) at (-5.5, 3) {};
		\coordinate (19) at (-5, -2) {};
		\coordinate (20) at (-5, 4.5) {};
		\coordinate (21) at (-5, 3) {};
		\coordinate (22) at (5.75, 5.25) {};
		\node [style=dotbig] (23) at (6.25, 0.5) {};
		\coordinate (24) at (-5, -1) {};
		\coordinate (25) at (-4, -0) {};
		\coordinate (26) at (-1.75, 0.5) {};
		\coordinate (27) at (-1.75, -0.5) {};
		\coordinate (28) at (0, -0) {};
		\coordinate (29) at (1.75, -0) {};
		\coordinate (30) at (-4.75, 1.25) {};
		\coordinate (31) at (3.75, 1) {};
		\coordinate (32) at (5.75, 1.5) {};
		\coordinate (33) at (5.75, 0.5) {};
		\coordinate (34) at (-1.75, -1.5) {};
		\coordinate (35) at (1.75, -1.5) {};
		\coordinate (36) at (5.75, -1.5) {};
		\coordinate (37) at (-4.25, 1.75) {};
		\coordinate (38) at (-4.25, 1) {};
		\coordinate (39) at (-2.75, 2.5) {};
		\coordinate (40) at (-2.5, 1.75) {};
		\coordinate (41) at (-2.25, 2.25) {};
		\coordinate (42) at (5, 4.25) {};
		\coordinate (43) at (5, 3.75) {};
	\end{pgfonlayer}
	\begin{pgfonlayer}{edgelayer}
		\draw (17) to (20);
		\draw (18) to (21);
		\draw (15) to (24);
		\draw (13) to (7);
		\draw (36) to (14);
		\draw (32) to (16);
		\draw (33) to (23);
		\draw (1) to (12);
		\draw (1) to (11);
		\draw (2) to (1);
		\draw (5) to (1);
		\draw (0) to (6);
		\draw (0) to (3);
		\draw (30) to (37);
		\draw (30) to (38);
		\draw (39) to (41);
		\draw (41) to (40);
		\draw (42) to (13);
		\draw (43) to (13);
		\path (6) edge [resistor] (2);
		\path (3) edge [resistor] (5);
		\path (9) edge [resistor] (8);
		\path (26) edge [resistor] (28);
		\path (27) edge [resistor] (28);
		\path (28) edge [resistor] (29);
		\path (0) edge [resistor] (4);
		\path (4) edge [resistor] (1);
		\path (31) edge [resistor] (32);
		\path (31) edge [resistor] (33);
		\path (34) edge [resistor] (35);
		\path (37) edge [resistor] (39);
		\path (38) edge [resistor] (40);
		\path (20) edge [resistor] (0);
		\path (21) edge [resistor] (0);
		\path (25) edge [resistor] (26);
		\path (25) edge [resistor] (27);
		\path (24) edge [resistor] (34);
		\path (29) edge [resistor] (10);
		\path (35) edge [resistor] (36);
		\path (12) edge [resistor] (42);
		\path (11) edge [resistor] (43);
	\end{pgfonlayer}
	\begin{pgfonlayer}{background}
	  \filldraw [fill=black!5!white, draw=black!40!white] (19) rectangle (22);
	\end{pgfonlayer}
\end{tikzpicture}
}
\end{aligned}
\]
Note in particular that the composite of these circuits contains a unique
resistor for every resistor in the factors. If we are interested in describing
the syntax of a diagrammatic language, then this is useful: composition builds
given expressions into a larger one. If we are only interested in the
semantics---given, say by the electrical behaviour at the terminals---this is
often unnecessary and thus often wildly inefficient.

Indeed, suppose our semantics for open circuits is given by the information that
can be gleaned by connecting other open circuits, such as measurement devices,
to the terminals. In these semantics we consider two open circuits equivalent
if, should they be encased, but for their terminals, in a black box
\[
\resizebox{.4\textwidth}{!}{
    \tikzset{every path/.style={line width=1.1pt}}
  \begin{tikzpicture}
    \begin{pgfonlayer}{nodelayer}
		\node [style=dotbig] (14) at (-5.5, 4.5) {};
		\node [style=dotbig] (15) at (-5.5, 3) {};
		\node [style=dotbig] (12) at (-5.5, -1) {};
		\node [style=dotbig] (7) at (6.25, 4) {};
		\node [style=dotbig] (13) at (6.25, 1.5) {};
		\node [style=dotbig] (20) at (6.25, 0.5) {};
		\node [style=dotbig] (11) at (6.25, -1.5) {};
		\coordinate (17) at (-5, 4.5) {};
		\coordinate (18) at (-5, 3) {};
		\coordinate (21) at (-5, -1) {};
		\coordinate (10) at (5.75, 4) {};
		\coordinate (23) at (5.75, 1.5) {};
		\coordinate (24) at (5.75, 0.5) {};
		\coordinate (27) at (5.75, -1.5) {};
		\coordinate (16) at (-5, -2) {};
		\coordinate (19) at (5.75, 5.25) {};
	\end{pgfonlayer}
	\begin{pgfonlayer}{edgelayer}
		\draw (14) to (17);
		\draw (15) to (18);
		\draw (12) to (21);
		\draw (10) to (7);
		\draw (27) to (11);
		\draw (23) to (13);
		\draw (24) to (20);
	\end{pgfonlayer}
	\begin{pgfonlayer}{background}
	  \filldraw [fill=black!80!white, draw=black!40!white] (16) rectangle (19);
	\end{pgfonlayer}
\end{tikzpicture}
}
\]
we would be unable to distinguish them through our electrical investigations. In
this case, at the very least, the previous circuit is equivalent to the circuit
\[
\resizebox{.4\textwidth}{!}{
    \tikzset{every path/.style={line width=1.1pt}}
  \begin{tikzpicture}[circuit ee IEC, set resistor graphic=var resistor IEC graphic]
	\begin{pgfonlayer}{nodelayer}
		\coordinate (0) at (-1.75, 4) {};
		\coordinate (1) at (1.75, 4) {};
		\coordinate (2) at (1, 4.25) {};
		\coordinate (3) at (-1, 3.75) {};
		\coordinate (4) at (0, 1.75) {};
		\coordinate (5) at (1, 3.75) {};
		\coordinate (6) at (-1, 4.25) {};
		\node [style=dotbig] (7) at (6.25, 4) {};
		\coordinate (8) at (2.5, 3.75) {};
		\coordinate (9) at (2.5, 4.25) {};
		\coordinate (10) at (5.75, 4) {};
		\node [style=dotbig] (11) at (6.25, -1.5) {};
		\node [style=dotbig] (12) at (-5.5, -1) {};
		\node [style=dotbig] (13) at (6.25, 1.5) {};
		\node [style=dotbig] (14) at (-5.5, 4.5) {};
		\node [style=dotbig] (15) at (-5.5, 3) {};
		\coordinate (16) at (-5, -2) {};
		\coordinate (17) at (-5, 4.5) {};
		\coordinate (18) at (-5, 3) {};
		\coordinate (19) at (5.75, 5.25) {};
		\node [style=dotbig] (20) at (6.25, 0.5) {};
		\coordinate (21) at (-5, -1) {};
		\coordinate (22) at (3.75, 1) {};
		\coordinate (23) at (5.75, 1.5) {};
		\coordinate (24) at (5.75, 0.5) {};
		\coordinate (25) at (-1.75, -1.5) {};
		\coordinate (26) at (1.75, -1.5) {};
		\coordinate (27) at (5.75, -1.5) {};
		\coordinate (28) at (5, 4.25) {};
		\coordinate (29) at (5, 3.75) {};
	\end{pgfonlayer}
	\begin{pgfonlayer}{edgelayer}
		\draw (14) to (17);
		\draw (15) to (18);
		\draw (12) to (21);
		\draw (10) to (7);
		\draw (27) to (11);
		\draw (23) to (13);
		\draw (24) to (20);
		\draw (1) to (9);
		\draw (1) to (8);
		\draw (2) to (1);
		\draw (5) to (1);
		\draw (0) to (6);
		\draw (0) to (3);
		\draw (28) to (10);
		\draw (29) to (10);
		\path (6) edge [resistor] (2);
		\path (3) edge [resistor] (5);
		\path (0) edge [resistor] (4);
		\path (4) edge [resistor] (1);
		\path (22) edge [resistor] (23);
		\path (22) edge [resistor] (24);
		\path (25) edge [resistor] (26);
		\path (17) edge [resistor] (0);
		\path (18) edge [resistor] (0);
		\path (21) edge [resistor] (25);
		\path (26) edge [resistor] (27);
		\path (9) edge [resistor] (28);
		\path (8) edge [resistor] (29);
	\end{pgfonlayer}
	\begin{pgfonlayer}{background}
	  \filldraw [fill=black!5!white, draw=black!40!white] (16) rectangle (19);
	\end{pgfonlayer}
\end{tikzpicture}
}
\]
where we have removed circuitry not connected to the terminals.  Moreover, this
second circuit is a more efficient representation, as it does not model
inaccessible, internal structure. If we wish to construct a category modelling
the semantics of open circuits then, we require circuit representations and a
composition rule that only retain the information relevant to the black boxed
circuit. In this paper we introduce the notion of corelation to play this role.

Indeed, corelations allow us to pursue a notion of composition that discards
extraneous information as we compose our systems. Consider, for example, the
category $\cospan(\FinSet)$ of cospans in the category of finite sets and
functions. Given a pair of cospans $X \to N \leftarrow Y$, $Y \to M \leftarrow
Z$, their composite has apex the pushout $N+_YM$. This, roughly speaking, is the
union of $N$ and $M$ with two points identified if they are both images of the
same element of $Y$. For example, the following pair of cospans:
\begin{center}
  \begin{tikzpicture}[auto,scale=2]
    \node[circle,draw,inner sep=1pt,fill=gray,color=gray] (x1) at (-1.5,.2) {};
    \node[circle,draw,inner sep=1pt,fill=gray,color=gray] (x2) at (-1.5,-.2) {};
    \node at (-1.5,-.8) {$X$};
    \node[circle,draw,inner sep=1pt,fill]         (A) at (0,.4) {};
    \node[circle,draw,inner sep=1pt,fill]         (B) at (0,.133) {};
    \node[circle,draw,inner sep=1pt,fill]         (C) at (0,-.133) {};
    \node[circle,draw,inner sep=1pt,fill]         (D) at (0,-.4) {};
    \node at (0,-.8) {$N$};
    \node[circle,draw,inner sep=1pt,fill=gray,color=gray] (y1) at (1.5,.3) {};
    \node[circle,draw,inner sep=1pt,fill=gray,color=gray] (y2) at (1.5,.1) {};
    \node[circle,draw,inner sep=1pt,fill=gray,color=gray] (y3) at (1.5,-.1) {};
    \node[circle,draw,inner sep=1pt,fill=gray,color=gray] (y4) at (1.5,-.3) {};
    \node at (1.5,-.8) {$Y$};
    \node[circle,draw,inner sep=1pt,fill]         (A') at (3,.5) {};
    \node[circle,draw,inner sep=1pt,fill]         (B') at (3,.25) {};
    \node[circle,draw,inner sep=1pt,fill]         (C') at (3,0) {};
    \node[circle,draw,inner sep=1pt,fill]         (D') at (3,-.25) {};
    \node[circle,draw,inner sep=1pt,fill]         (E') at (3,-.5) {};
    \node at (3,-.8) {$M$};
    \node[circle,draw,inner sep=1pt,fill=gray,color=gray] (z1) at (4.5,.2) {};
    \node[circle,draw,inner sep=1pt,fill=gray,color=gray] (z2) at (4.5,-.2) {};
    \node at (4.5,-.8) {$Z$};
    \path[color=gray, very thick, shorten >=10pt, shorten <=5pt, ->, >=stealth]
    (x1) edge (C);
    \path[color=gray, very thick, shorten >=10pt, shorten <=5pt, ->, >=stealth]
    (x2) edge (C);
    \path[color=gray, very thick, shorten >=10pt, shorten <=5pt, ->, >=stealth]
    (y1) edge (B);
    \path[color=gray, very thick, shorten >=10pt, shorten <=5pt, ->, >=stealth]
    (y2) edge (C);
    \path[color=gray, very thick, shorten >=10pt, shorten <=5pt, ->, >=stealth]
    (y3) edge (D);
    \path[color=gray, very thick, shorten >=10pt, shorten <=5pt, ->, >=stealth]
    (y4) edge (D);
    \path[color=gray, very thick, shorten >=10pt, shorten <=5pt, ->, >=stealth]
    (y1) edge (A');
    \path[color=gray, very thick, shorten >=10pt, shorten <=5pt, ->, >=stealth]
    (y2) edge (B');
    \path[color=gray, very thick, shorten >=10pt, shorten <=5pt, ->, >=stealth]
    (y3) edge (C');
    \path[color=gray, very thick, shorten >=10pt, shorten <=5pt, ->, >=stealth]
    (y4) edge (D');
    \path[color=gray, very thick, shorten >=10pt, shorten <=5pt, ->, >=stealth]
    (z1) edge (B');
    \path[color=gray, very thick, shorten >=10pt, shorten <=5pt, ->, >=stealth]
    (z2) edge (E');
  \end{tikzpicture}
\end{center}
becomes
\begin{center}
  \begin{tikzpicture}[auto,scale=2]
    \node[circle,draw,inner sep=1pt,fill=gray,color=gray] (x1) at (1.5,.2) {};
    \node[circle,draw,inner sep=1pt,fill=gray,color=gray] (x2) at (1.5,-.2) {};
    \node at (1.5,-.8) {$X$};
    \node[circle,draw,inner sep=1pt,fill]         (A) at (3,.5) {};
    \node[circle,draw,inner sep=1pt,fill]         (B) at (3,.25) {};
    \node[circle,draw,inner sep=1pt,fill]         (C) at (3,0) {};
    \node[circle,draw,inner sep=1pt,fill]         (D) at (3,-.25) {};
    \node[circle,draw,inner sep=1pt,fill]         (E) at (3,-.5) {};
    \node at (3,-.8) {$N+_YM$};
    \node[circle,draw,inner sep=1pt,fill=gray,color=gray] (z1) at (4.5,.2) {};
    \node[circle,draw,inner sep=1pt,fill=gray,color=gray] (z2) at (4.5,-.2) {};
    \node at (4.5,-.8) {$Z$};
    \path[color=gray, very thick, shorten >=10pt, shorten <=5pt, ->, >=stealth]
    (x1) edge (C);
    \path[color=gray, very thick, shorten >=10pt, shorten <=5pt, ->, >=stealth]
    (x2) edge (C);
    \path[color=gray, very thick, shorten >=10pt, shorten <=5pt, ->, >=stealth]
    (z1) edge (C);
    \path[color=gray, very thick, shorten >=10pt, shorten <=5pt, ->, >=stealth]
    (z2) edge (E);
  \end{tikzpicture}
\end{center}
Here we see essentially the same phenomenon as we described for circuits above:
the apex of the cospan is much larger than the image of the maps from the feet.

Corelations address this with what is known as a $(\mc E,\mc M)$-factorisation
system. A factorisation system comprises subcategories $\mc E$ and $\mc M$ of
$\mc C$ such that every morphism in $\mc C$ factors, in a coherent way, as the
composite of a morphism in $\mc E$ followed by a morphism in $\mc M$. An
example, known as the epi-mono factorisation system on $\Set$, is yielded by the
observation that every function can be written as a surjection followed by an
injection.

Corelations, or more precisely $(\mc E,\mc M)$-corelations, are cospans $X
\to N \leftarrow Y$ such that the copairing $X+Y \to N$ of the two maps is an
element of the first factor $\mc E$ of the factorisation system. Composition of
corelations proceeds first as composition of cospans, but then takes only the
so-called $\mc E$-part of the composite cospan, to ensure the composite is again
a corelation. If we take the $\mc E$-part of a cospan $X \to N \leftarrow Y$, we
write the new apex $\overline{N}$, and so the resulting corelation $X \to
\overline{N} \leftarrow Y$. 

Mapping the above two cospans to epi-mono corelations in $\FinSet$ they become 
\begin{center}
  \begin{tikzpicture}[auto,scale=2]
    \node[circle,draw,inner sep=1pt,fill=gray,color=gray] (x1) at (-1.5,.2) {};
    \node[circle,draw,inner sep=1pt,fill=gray,color=gray] (x2) at (-1.5,-.2) {};
    \node at (-1.5,-.8) {$X$};
    \node[circle,draw,inner sep=1pt,fill]         (B) at (0,.3) {};
    \node[circle,draw,inner sep=1pt,fill]         (C) at (0,0) {};
    \node[circle,draw,inner sep=1pt,fill]         (D) at (0,-.3) {};
    \node at (0,-.8) {$\overline{N}$};
    \node[circle,draw,inner sep=1pt,fill=gray,color=gray] (y1) at (1.5,.3) {};
    \node[circle,draw,inner sep=1pt,fill=gray,color=gray] (y2) at (1.5,.1) {};
    \node[circle,draw,inner sep=1pt,fill=gray,color=gray] (y3) at (1.5,-.1) {};
    \node[circle,draw,inner sep=1pt,fill=gray,color=gray] (y4) at (1.5,-.3) {};
    \node at (1.5,-.8) {$Y$};
    \node[circle,draw,inner sep=1pt,fill]         (A') at (3,.5) {};
    \node[circle,draw,inner sep=1pt,fill]         (B') at (3,.25) {};
    \node[circle,draw,inner sep=1pt,fill]         (C') at (3,0) {};
    \node[circle,draw,inner sep=1pt,fill]         (D') at (3,-.25) {};
    \node[circle,draw,inner sep=1pt,fill]         (E') at (3,-.5) {};
    \node at (3,-.8) {$M=\overline{M}$};
    \node[circle,draw,inner sep=1pt,fill=gray,color=gray] (z1) at (4.5,.2) {};
    \node[circle,draw,inner sep=1pt,fill=gray,color=gray] (z2) at (4.5,-.2) {};
    \node at (4.5,-.8) {$Z$,};
    \path[color=gray, very thick, shorten >=10pt, shorten <=5pt, ->, >=stealth]
    (x1) edge (C);
    \path[color=gray, very thick, shorten >=10pt, shorten <=5pt, ->, >=stealth]
    (x2) edge (C);
    \path[color=gray, very thick, shorten >=10pt, shorten <=5pt, ->, >=stealth]
    (y1) edge (B);
    \path[color=gray, very thick, shorten >=10pt, shorten <=5pt, ->, >=stealth]
    (y2) edge (C);
    \path[color=gray, very thick, shorten >=10pt, shorten <=5pt, ->, >=stealth]
    (y3) edge (D);
    \path[color=gray, very thick, shorten >=10pt, shorten <=5pt, ->, >=stealth]
    (y4) edge (D);
    \path[color=gray, very thick, shorten >=10pt, shorten <=5pt, ->, >=stealth]
    (y1) edge (A');
    \path[color=gray, very thick, shorten >=10pt, shorten <=5pt, ->, >=stealth]
    (y2) edge (B');
    \path[color=gray, very thick, shorten >=10pt, shorten <=5pt, ->, >=stealth]
    (y3) edge (C');
    \path[color=gray, very thick, shorten >=10pt, shorten <=5pt, ->, >=stealth]
    (y4) edge (D');
    \path[color=gray, very thick, shorten >=10pt, shorten <=5pt, ->, >=stealth]
    (z1) edge (B');
    \path[color=gray, very thick, shorten >=10pt, shorten <=5pt, ->, >=stealth]
    (z2) edge (E');
  \end{tikzpicture}
\end{center}
with composite
\begin{center}
  \begin{tikzpicture}[auto,scale=2]
    \node[circle,draw,inner sep=1pt,fill=gray,color=gray] (x1) at (1.5,.2) {};
    \node[circle,draw,inner sep=1pt,fill=gray,color=gray] (x2) at (1.5,-.2) {};
    \node at (1.5,-.45) {$X$};
    \node[circle,draw,inner sep=1pt,fill]         (A) at (3,.2) {};
    \node[circle,draw,inner sep=1pt,fill]         (B) at (3,-.2) {};
    \node at (3,-.45) {$\overline{N+_YM}$};
    \node[circle,draw,inner sep=1pt,fill=gray,color=gray] (z1) at (4.5,.2) {};
    \node[circle,draw,inner sep=1pt,fill=gray,color=gray] (z2) at (4.5,-.2) {};
    \node at (4.5,-.45) {$Z$.};
    \path[color=gray, very thick, shorten >=10pt, shorten <=5pt, ->, >=stealth]
    (x1) edge (A);
    \path[color=gray, very thick, shorten >=10pt, shorten <=5pt, ->, >=stealth]
    (x2) edge (A);
    \path[color=gray, very thick, shorten >=10pt, shorten <=5pt, ->, >=stealth]
    (z1) edge (A);
    \path[color=gray, very thick, shorten >=10pt, shorten <=5pt, ->, >=stealth]
    (z2) edge (B);
  \end{tikzpicture}
\end{center}
Note that the apex of the composite corelation is the subset of the apex of the
composite cospan comprising exactly those elements in the image of the maps from
the feet. The intuition, again, is that composition of corelations discards
irrelevant information---of course, exactly what information it discards depends
on our choice of factorisation system.

Recall that a hypergraph category is a symmetric monoidal category in which
every object is equipped with the structure of a special commutative Frobenius
monoid in a coherent way. Due to the readily available Frobenius structure,
hypergraph categories are well suited to modelling network-style composition.
For example, in the circuits example above, the Frobenius structure allows
description of many-to-many interconnections, as well as the ability to turn
inputs into outputs, and vice versa. 

Our first contribution is to show that, under a mild condition on the
factorisation system, we can use corelations to construct
hypergraph categories and their functors.

\begin{theorem}
Let $\mc C$ be a category with finite colimits and a factorisation system $(\mc
E,\mc M)$. If $\mc M$ is stable under pushout, then corelations in $\mc C$ form
the morphisms of a hypergraph category. 

Moreover, let $A$ be a colimit-preserving functor between categories $\mathcal
C$, $\mathcal C'$, where $\mc C$ and $\mc C'$ are respectively equipped with
factorisation systems $(\mathcal E, \mathcal M)$, $(\mathcal E', \mathcal M')$
such that $\mc M$ and $\mc M'$ are stable under pushout. If the image under $A$
of $\mathcal M$ lies in $\mathcal M'$, then $A$ induces a hypergraph functor
between their corelation categories.
\end{theorem}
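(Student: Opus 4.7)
The plan is to leverage the well-known hypergraph structure on $\cospan(\mc C)$ and show that the $(\mc E,\mc M)$-factorisation of apices defines a quotient-like construction inheriting this structure. I would define composition of corelations $X \to N \leftarrow Y$ and $Y \to M \leftarrow Z$ by first forming the pushout $N +_Y M$ in $\mc C$, then factorising the copairing $X+Z \to N +_Y M$ as $X+Z \xrightarrow{e} \overline{N+_YM} \xrightarrow{m} N+_YM$ with $e \in \mc E$ and $m \in \mc M$; the apex of the composite corelation is $\overline{N+_YM}$, with identities $X \xrightarrow{\idn} X \xleftarrow{\idn} X$. Well-definedness is immediate from the essential uniqueness of $(\mc E,\mc M)$-factorisations.

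For the symmetric monoidal and hypergraph structure, the generators on each object---the multiplication cospan $X+X \to X \leftarrow X$, unit $0 \to X \leftarrow X$, and their opposites---come directly from the cocartesian structure of $\mc C$. Their copairings are either identities or canonical isomorphisms, hence lie in $\mc E$, so these structure cospans are already corelations. Since the hypergraph axioms hold in $\cospan(\mc C)$ and the assignment to corelations is identity-on-objects and sends each generator to its image, the axioms descend to $\corel(\mc C)$. For the functor statement, I would define the induced map on a corelation $X \to N \leftarrow Y$ by applying $A$ to get a cospan in $\mc C'$ and then factorising, yielding $AX \to \overline{AN} \leftarrow AY$ where $\overline{AN}$ is the $\mc E'$-part of $AX+AY \to AN$. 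Functoriality uses that $A$ preserves coproducts and pushouts, and the hypergraph functor axioms follow because the generating Frobenius cospans are built from coproduct data that $A$ preserves strictly.

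The principal technical obstacle, underlying both associativity of corelation composition and compatibility of $A$ with composition, is the following stability lemma: given any pushout square and any $(\mc E,\mc M)$-factorisation of one of its sides, factorising before or after taking the pushout yields the same $\overline{\cdot}$-apex up to canonical isomorphism. This rests crucially on the hypothesis that $\mc M$ is stable under pushout, since that is precisely what allows the $\mc M$-part of an intermediate factorisation to be pushed through a subsequent pushout and absorbed into a final $\mc M$-factor, leaving the same $\mc E$-part as a single end-of-composition factorisation. Once this lemma is verified, associativity of corelation composition, its interchange with the tensor, and the preservation of composition by $A$ (using $A(\mc M) \subseteq \mc M'$ to control how $A$ interacts with factorisations) all follow by diagrammatic bookkeeping, and the remaining hypergraph axioms transfer from $\cospan(\mc C)$ essentially for free.
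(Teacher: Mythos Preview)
Your approach is essentially the paper's: both inherit the hypergraph structure on $\corel(\mc C)$ from $\cospan(\mc C)$ via the identity-on-objects, surjective-on-morphisms map sending a cospan to its $\mc E$-part, with the crux being that this map preserves composition because $\mc M$-stability under pushout forces the induced map $\overline N +_Y \overline M \to N +_Y M$ to lie in $\mc M$, so factorising before or after the pushout yields the same $\mc E$-part.

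Two points worth noting. First, the paper gains economy by observing that $\cospan(\mc C) = \corel_{(\mc C,\mc I_{\mc C})}(\mc C)$, so the general functor statement specialised to $A = 1_{\mc C}$ with domain factorisation system $(\mc C,\mc I_{\mc C})$ \emph{is} the map $\cospan(\mc C) \to \corel(\mc C)$; proving once that the general $\square$ preserves composition and has natural coherence maps thus establishes both halves of the theorem simultaneously, rather than as the parallel arguments you sketch. Second, you implicitly assume the tensor of two corelations is again a corelation, i.e.\ that $\mc E$ is closed under coproduct; the paper isolates this as a separate lemma, proved via the orthogonality property of the factorisation system rather than pushout-stability, and it does require its own short argument.
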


The object of this paper is to construct hypergraph categories of
\emph{decorated} corelations. How do decorations enter the picture? An
instructive example comes from matrices. Suppose we have devices built from
channels that take the signal at some input, amplify it, and deliver it to some
output. For simplicity let these signals be real numbers, and amplication be
linear: we just multiply by some fixed scalar.  We depict an example device like
so:
\[
    \tikzset{every path/.style={line width=.8pt}}
\begin{tikzpicture}
	\begin{pgfonlayer}{nodelayer}
		\node [style=sdot] (0) at (-2.5, 1.5) {};
		\node [style=sdot] (1) at (-2.5, -0) {};
		\node [style=sdot] (2) at (-2.5, -1.5) {};
		\node [style=amp] (3) at (-0.75, 2) {$5$};
		\node [style=amp] (4) at (-0.75, 1.25) {$1$};
		\node [style=amp] (5) at (-0.75, -1) {$-2$};
		\node [style=amp] (6) at (-0.75, 0.25) {$2.1$};
		\node [style=amp] (7) at (-0.75, -0.25) {$-0.4$};
		\node [style=sdot] (8) at (1, -0.5) {};
		\node [style=sdot] (9) at (1, -1.5) {};
		\node [style=sdot] (10) at (1, 0.5) {};
		\node [style=sdot] (11) at (1, 1.5) {};
	\end{pgfonlayer}
	\begin{pgfonlayer}{edgelayer}
		\draw (3.west) to (0.center);
		\draw (4.west) to (0.center);
		\draw (6.west) to (1.center);
		\draw (7.west) to (1.center);
		\draw (5.west) to (2.center);
		\draw (3) to (11);
		\draw (4) to (11);
		\draw (6) to (10);
		\draw (7) to (8);
		\draw (5) to (8);
	\end{pgfonlayer}
\end{tikzpicture}
\]
Here there are three inputs, four outputs, and five paths. Formally, we might
model these devices as finite sets of inputs $X$, outputs $Y$, and paths $N$,
together with functions $i\maps N \to X$ and $o\maps N \to Y$ describing the
start and end of each path, and a function $s\maps N \to \R$ describing the
amplification along it. In other words, these are cospans $X \to N \leftarrow Y$
in $\FinSet^\opp$ decorated by `scalar assignment' functions $N \to \R$. This
suggests a decorated cospans construction.

For a decorated cospan category we begin with a lax symmetric monoidal functor
on a category with finite colimits, such as the functor that takes a finite set
$N$ to the set of circuits with vertex set $N$. Here we begin with the lax
symmetric monoidal functor $\mathbb R^{(-)}\maps$ $\FinSet^\opp \to \Set$ that
takes a finite set $N$ to the set $\R^N$ of functions $s\maps N \to \R$, and
takes an opposite function $f^\opp\maps N \to M$ to the map sending $s\maps N
\to \R$ to $s \circ f \maps M \to \R$. Note that the coproduct in $\FinSet^\opp$
is the cartesian product of sets. The coherence maps of the functor, which are
critical for composing the decorations, are given by $\varphi_{N,M}\maps \R^N
\times \R^M \to \R^{N\times M}$, taking $(s,t) \in \R^N \times \R^M$ to the
function $s \cdot t\maps N \times M \to \R$ defined by pointwise multiplication
in $\R$. 

Composition in this decorated cospan category is thus given by the
multiplication in $\R$. In detail, given decorated cospans $(X
\xrightarrow{i_X^\opp} N \xleftarrow{o_Y^\opp} Y, N \xrightarrow{s} \R)$ and $(Y
\xrightarrow{i_Y^\opp} M \xleftarrow{o_Z^\opp} Z, M \xrightarrow{t} \R)$, the
composite has a path from $x \in X$ to $z \in Z$ for every triple $(y,n,m)$
where $y \in Y$, $n \in N$, and $m \in M$, such that $n$ is a path from $x$ to
$y$ and $m$ is a path from $y$ to $z$. The scalar assigned to this path is the
product of those assigned to $n$ and $m$. For example, we have the following
composite
\[
    \tikzset{every path/.style={line width=.8pt}}
    \begin{aligned}
\begin{tikzpicture}
	\begin{pgfonlayer}{nodelayer}
		\node [style=sdot] (0) at (-2.5, 1.5) {};
		\node [style=sdot] (1) at (-2.5, -0) {};
		\node [style=sdot] (2) at (-2.5, -1.5) {};
		\node [style=amp] (3) at (-0.75, 2) {$5$};
		\node [style=amp] (4) at (-0.75, 1.25) {$1$};
		\node [style=amp] (5) at (-0.75, -1) {$-2$};
		\node [style=amp] (6) at (-0.75, 0.25) {$2.1$};
		\node [style=amp] (7) at (-0.75, -0.25) {$-0.4$};
		\node [style=sdot] (8) at (1, -0.5) {};
		\node [style=sdot] (9) at (4.5, 0.5) {};
		\node [style=sdot] (10) at (1, -1.5) {};
		\node [style=sdot] (11) at (4.5, -1.5) {};
		\node [style=amp] (12) at (2.75, -0.5) {$-1$};
		\node [style=amp] (13) at (2.75, 1.25) {$3$};
		\node [style=amp] (14) at (2.75, -1.5) {$-2.3$};
		\node [style=amp] (15) at (2.75, 2) {$1$};
		\node [style=sdot] (16) at (4.5, 1.5) {};
		\node [style=amp] (17) at (2.75, -0) {$3$};
		\node [style=sdot] (18) at (4.5, -0.5) {};
		\node [style=sdot] (19) at (1, 0.5) {};
		\node [style=sdot] (20) at (1, 1.5) {};
	\end{pgfonlayer}
	\begin{pgfonlayer}{edgelayer}
		\draw (3.west) to (0.center);
		\draw (4.west) to (0.center);
		\draw (6.west) to (1.center);
		\draw (7.west) to (1.center);
		\draw (5.west) to (2.center);
		\draw (16) to (15);
		\draw (16) to (13);
		\draw (18) to (12);
		\draw (14) to (11);
		\draw (10) to (14);
		\draw (15) to (20);
		\draw (13) to (20);
		\draw (3) to (20);
		\draw (4) to (20);
		\draw (6) to (19);
		\draw (7) to (8);
		\draw (5) to (8);
		\draw (12) to (8);
		\draw (19) to (17);
		\draw (17) to (18);
	\end{pgfonlayer}
\end{tikzpicture}
\end{aligned}
\qquad = \qquad
\begin{aligned}
\begin{tikzpicture}
	\begin{pgfonlayer}{nodelayer}
		\node [style=sdot] (0) at (-2.5, 1.5) {};
		\node [style=sdot] (1) at (-2.5, -0) {};
		\node [style=sdot] (2) at (-2.5, -1.5) {};
		\node [style=amp] (3) at (-0.75, 1.75) {$15$};
		\node [style=amp] (4) at (-0.75, 1.25) {$1$};
		\node [style=amp] (5) at (-0.75, -1) {$2$};
		\node [style=amp] (6) at (-0.75, -0) {$6.3$};
		\node [style=amp] (7) at (-0.75, -0.5) {$0.4$};
		\node [style=sdot] (8) at (1, -0.5) {};
		\node [style=sdot] (9) at (1, -1.5) {};
		\node [style=sdot] (10) at (1, 0.5) {};
		\node [style=sdot] (11) at (1, 1.5) {};
		\node [style=amp] (12) at (-0.75, 2.25) {$5$};
		\node [style=amp] (13) at (-0.75, 0.75) {$3$};
	\end{pgfonlayer}
	\begin{pgfonlayer}{edgelayer}
		\draw (3.west) to (0.center);
		\draw (4.west) to (0.center);
		\draw (6.west) to (1.center);
		\draw (7.west) to (1.center);
		\draw (5.west) to (2.center);
		\draw (3) to (11);
		\draw (4) to (11);
		\draw (7) to (8);
		\draw (5) to (8);
		\draw (0) to (12);
		\draw (12) to (11);
		\draw (0) to (13);
		\draw (13) to (11);
		\draw (6) to (8);
	\end{pgfonlayer}
\end{tikzpicture}
\end{aligned}
\]
There are four paths between the top-most element $x_1$ of the domain and the
top-most element $z_1$ of the codomain: we may first take the path that
amplifies by $5\times$ and then the path that amplifies by $1\times$ for a total
amplification of $5\times$, or $5\times$ and $3\times$ for $15\times$, and so
on. This means we end up with four elements relating $x_1$ and $z_1$ in the
composite. The apex of the composite is in fact given by the pullback $N
\times_Y M$ of the cospan $N \xrightarrow{o_Y} Y \xleftarrow{i_Y} M$ in
$\FinSet$.

Here we again see the problem of decorated cospans: the composite of the above
puts decorations on $N\times_Y M$, which can be of much larger cardinality than
$N$ and $M$. We wish to avoid the size of our decorated cospan from growing so
fast. Moreover, from our open systems perspective, we care not about the path
but by the total amplification of the signal from some chosen input to some
chosen output. The intuition is that if we black box the system, then we cannot
tell what paths the signal took through the system, only the total amplification
from input to output.

We thus want to restrict our apex to contain at most one point for each
input--output pair $(x,y)$. We do this by pushing the decoration along the
surjection $e$ in the epi-mono factorisation of the function $N\times_YM
\xrightarrow{e} \overline{N\times_YM} \xrightarrow{m} X \times Z$. Put another
way, we want the category of decorated corelations, not decorated cospans.

Represented as decorated corelations, the above composite becomes
\[
  \tikzset{every path/.style={line width=.8pt}}
  \begin{aligned}
    \begin{tikzpicture}
      \begin{pgfonlayer}{nodelayer}
	\node [style=sdot] (0) at (-2.5, 1.5) {};
	\node [style=sdot] (1) at (-2.5, -0) {};
	\node [style=sdot] (2) at (-2.5, -1.5) {};
	\node [style=amp] (3) at (-0.75, 1.5) {$6$};
	\node [style=amp] (4) at (-0.75, -1) {$-2$};
	\node [style=amp] (5) at (-0.75, 0.25) {$2.1$};
	\node [style=amp] (6) at (-0.75, -0.25) {$-0.4$};
	\node [style=sdot] (7) at (1, -0.5) {};
	\node [style=sdot] (8) at (4.5, 0.5) {};
	\node [style=sdot] (9) at (1, -1.5) {};
	\node [style=sdot] (10) at (4.5, -1.5) {};
	\node [style=amp] (11) at (2.75, -0.5) {$-1$};
	\node [style=amp] (12) at (2.75, -1.5) {$-2.3$};
	\node [style=amp] (13) at (2.75, 1.5) {$4$};
	\node [style=sdot] (14) at (4.5, 1.5) {};
	\node [style=amp] (15) at (2.75, -0) {$3$};
	\node [style=sdot] (16) at (4.5, -0.5) {};
	\node [style=sdot] (17) at (1, 0.5) {};
	\node [style=sdot] (18) at (1, 1.5) {};
      \end{pgfonlayer}
      \begin{pgfonlayer}{edgelayer}
	\draw (3.west) to (0.center);
	\draw (5.west) to (1.center);
	\draw (6.west) to (1.center);
	\draw (4.west) to (2.center);
	\draw (14) to (13);
	\draw (16) to (11);
	\draw (12) to (10);
	\draw (9) to (12);
	\draw (13) to (18);
	\draw (3) to (18);
	\draw (5) to (17);
	\draw (6) to (7);
	\draw (4) to (7);
	\draw (11) to (7);
	\draw (17) to (15);
	\draw (15) to (16);
      \end{pgfonlayer}
    \end{tikzpicture}
  \end{aligned}
  \qquad = \qquad
  \begin{aligned}
    \begin{tikzpicture}
      \begin{pgfonlayer}{nodelayer}
	\node [style=sdot] (0) at (-2.5, 1.5) {};
	\node [style=sdot] (1) at (-2.5, -0) {};
	\node [style=sdot] (2) at (-2.5, -1.5) {};
	\node [style=amp] (3) at (-0.75, 1.5) {$24$};
	\node [style=amp] (4) at (-0.75, -1) {$2$};
	\node [style=amp] (5) at (-0.75, -0.25) {$6.7$};
	\node [style=sdot] (6) at (1, -0.5) {};
	\node [style=sdot] (7) at (1, -1.5) {};
	\node [style=sdot] (8) at (1, 0.5) {};
	\node [style=sdot] (9) at (1, 1.5) {};
      \end{pgfonlayer}
      \begin{pgfonlayer}{edgelayer}
	\draw (3.west) to (0.center);
	\draw (5.west) to (1.center);
	\draw (4.west) to (2.center);
	\draw (3) to (9);
	\draw (5) to (6);
	\draw (4) to (6);
      \end{pgfonlayer}
    \end{tikzpicture}
  \end{aligned}
\]
Note that composite is not simply the composite as decorated cospans, but the
composite decorated cospan reduced to a decorated corelation. We will show that
this decorated corelations category is equivalent to the category of real vector
spaces and linear maps, with monoidal product the tensor product.

It is not a trivial fact that the above composition rule for decorated
corelations defines a category. Indeed, the reason that it is possible to push
the decoration along the surjection $e$ is that the lax symmetric monoidal
functor $\R^{(-)}\maps \FinSet^\opp \to \Set$ extends to a lax symmetric
monoidal functor $(\FinSet^\opp;\mathrm{Sur}^\opp) \to \Set$. Here
$\FinSet^\opp;\mathrm{Sur}^\opp$ is the subcategory of $\cospan(\FinSet^\opp)$
comprising cospans of the form $\xrightarrow{f^\opp}\xleftarrow{e^\opp}$, where
$f$ is any function but $e$ must be a surjection.

More generally, given a category $\mc C$ with finite colimits and a subcategory
$\mc M$ stable under pushouts, we may construct a symmetric monoidal category
$\mc C;\mc M^\opp$ with isomorphisms classes of cospans of the form
$\xrightarrow{f}\xleftarrow{m}$, where $f \in \mc C$, $m \in \mc M$, as
morphisms. The monoidal product is again derived from the coproduct in $\mc C$.

The main theorem is that these decorated corelations form a hypergraph category.
\begin{theorem}
Given a category $\mc C$ with finite colimits, factorisation system $(\mc E,\mc
M)$ such that $\mc M$ is stable under pushouts, and a symmetric lax monoidal functor 
\[
  F\maps \mc C; \mc M^\opp \longrightarrow \Set,
\]
define a decorated corelation to be an $(\mc E,\mc
M)$-corelation $X \to N \leftarrow Y$ in $\mc C$ together with an element of
$FN$. Then there is a hypergraph category $F\mathrm{Corel}$ with the objects of
$\mc C$ as objects and isomorphism classes of decorated corelations as
morphisms. 
\end{theorem}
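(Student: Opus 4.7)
The plan is to layer decoration data on top of the hypergraph category of corelations provided by the previous theorem, paralleling the decorated cospan construction of \cite{Fon15}.

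First, I would fix the composition rule. Given decorated corelations $(X \to N \leftarrow Y, s \in FN)$ and $(Y \to M \leftarrow Z, t \in FM)$, the underlying composite corelation $X \to \overline{N +_Y M} \leftarrow Z$ is supplied by the previous theorem, where $\overline{N +_Y M}$ is the apex of the $(\mc E,\mc M)$-factorisation of the copairing $X + Z \to N +_Y M$. For the decoration, first form $\varphi_{N,M}(s, t) \in F(N+M)$ using the laxator of $F$. The crucial observation is that the zigzag $N + M \to N +_Y M \leftarrow \overline{N +_Y M}$---the right-pointing map coming from the pushout and the left-pointing map being the $\mc M$-part of the factorisation---is a morphism in $\mc C; \mc M^\opp$; applying $F$ pushes $\varphi_{N,M}(s,t)$ forward to the required element of $F(\overline{N +_Y M})$. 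This step is exactly why $F$ must be defined on $\mc C; \mc M^\opp$ rather than merely on $\mc C$.

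Identities on $X$ use the identity underlying corelation decorated by the \emph{empty decoration}: the image of the unit element $1 \to F\emptyset$ along $F$ of the cospan $\emptyset \to X \xleftarrow{\mathrm{id}} X$. The same recipe promotes any corelation to a decorated one, and the unit axioms of the laxator together with functoriality of $F$ ensure that empty decorations behave as units under the composition rule above. Well-definedness on isomorphism classes follows from functoriality of $F$ applied to the canonical isomorphisms between pushouts and between $\mc E$-parts. The symmetric monoidal product is defined on objects by the coproduct in $\mc C$ and on morphisms by juxtaposing corelations and combining decorations via $\varphi$; the requisite coherences follow from those of the coproduct together with the symmetric structure of $F$.

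The heart of the argument is associativity. The underlying triple corelation is unambiguous by the previous theorem, so it suffices to compare the two iterated decorations in $F$ of the common apex of the triple composite. Each is obtained by applying $F$ to a composite in $\mc C; \mc M^\opp$ preceded by a double application of $\varphi$; equality of the two composite cospans uses the universal property of iterated pushouts together with stability of $\mc M$ under pushout, while equality of the two iterations of $\varphi$ is precisely the associativity coherence of the lax monoidal functor $F$. I expect this to be the main obstacle, as it is the one point where lax monoidal coherence, pushout structure, and the $(\mc E,\mc M)$-factorisation must interact simultaneously. Once it is in hand, the special commutative Frobenius structure transports from the corelation category of the previous theorem by decorating each structural morphism with its empty decoration, and the hypergraph axioms lift immediately by stability of empty decorations under composition.
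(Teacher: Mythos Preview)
Your proposal is correct in outline, but takes a different route from the paper. You opt for direct verification: fix the composition rule, check identities via empty decorations, tackle associativity by comparing the two iterated morphisms in $\mc C;\mc M^\opp$ applied to the laxator, and lift the Frobenius structure from $\corel(\mc C)$ by empty-decorating. This works, and your identification of associativity as the crux is accurate---it does require showing that two zigzags $N+M+P \rightsquigarrow \overline{N+_YM+_ZP}$ in $\mc C;\mc M^\opp$ coincide, which in turn needs stability of $\mc M$ under pushout to control how the intermediate $\mc M$-factors interact with the subsequent pushouts.

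The paper avoids this direct verification entirely. Its strategy is to prove a more general statement first: that a colimit-preserving functor $A\maps \mc C \to \mc C'$ with $A(\mc M)\subseteq \mc M'$, together with a monoidal natural transformation $\theta\maps F \Rightarrow G\circ A$, induces a composition-preserving, monoidally coherent map $F\mathrm{Corel} \to G\mathrm{Corel}$. Specialising to $A = 1_{\mc C}$, $(\mc E,\mc M) = (\mc C,\mc I_{\mc C})$ on the domain side, and $\theta$ the identity, this yields a bijective-on-objects, surjective-on-morphisms, structure-preserving map $F\mathrm{Cospan} \to F\mathrm{Corel}$. Since $F\mathrm{Cospan}$ is already known to be a hypergraph category, every axiom---associativity, interchange, Frobenius laws---descends along this surjection without needing to be re-proved. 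The payoff is twofold: the associativity diagram chase is done once in the general functor lemma rather than bespoke for $F\mathrm{Corel}$, and the same lemma immediately gives the construction of hypergraph functors between decorated corelation categories. Your direct approach is more self-contained but forgoes this economy; in particular, you would still owe a separate argument for functors afterwards.
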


As for decorated cospans, hypergraph functors between these so-named decorated
corelations categories can further be defined from natural transformations
between the decorating functors. This is especially useful for problems of
constructing compositional semantics, such as the circuit setting outlined
above.

\subsection*{Outline.}
The structure of this paper is straightforward. After a brief review of
background material, we discuss in turn corelation categories
(\textsection\ref{sec.corels}), functors between corelation categories
(\textsection\ref{sec.corelfunctors}), decorated corelation categories
(\textsection\ref{sec.dcorc}), and functors between decorated corelation
categories (\textsection\ref{sec.dcorf}). We then conclude with detailed
discussions of two examples: matrices and linear relations.

\section{Background} \label{sec.background}

This section provides a brief review of hypergraph categories, cospans,
decorated cospans, and corelations. For details, see \cite{Fon15, Fon16}.

\subsection*{Hypergraph categories}\hfill

\noindent
We recall special commutative Frobenius monoids, writing
our axioms using the string calculus for monoidal categories introduced by Joyal
and Street \cite{JS91}. Diagrams will be read left to right, and we shall suppress
the labels as we deal with a unique generating object and a unique generator of
each type. 

\begin{definition}
  A \define{special commutative Frobenius monoid} $(X,\mu,\eta,\delta,\epsilon)$
  in a monoidal category $(\mathcal C, \otimes)$ is an object $X$ of $\mathcal
  C$ together with maps 
\[
  \xymatrixrowsep{1pt}
  \xymatrix{
    \mult{.075\textwidth} & & \unit{.075\textwidth} & & 
    \comult{.075\textwidth} & & \counit{.075\textwidth} \\
    \mu\maps X\otimes X \to X & & \eta\maps I \to X & & 
    \delta\maps X\to X \otimes X & & \epsilon\maps X \to I
  }
\]
obeying the commutative monoid axioms
\[
  \xymatrixrowsep{1pt}
  \xymatrixcolsep{25pt}
  \xymatrix{
    \assocl{.1\textwidth} = \assocr{.1\textwidth} & \unitl{.1\textwidth} =
    \idone{.1\textwidth} & \commute{.1\textwidth} = \mult{.07\textwidth} \\
    \textrm{(associativity)} & \textrm{(unitality)} & \textrm{(commutativity)}
  }
\]
the cocommutative comonoid axioms
\[
  \xymatrixrowsep{1pt}
  \xymatrixcolsep{25pt}
  \xymatrix{
    \coassocl{.1\textwidth} = \coassocr{.1\textwidth} & \counitl{.1\textwidth} =
    \idone{.1\textwidth} & \cocommute{.1\textwidth} = \comult{.07\textwidth} \\
    \textrm{(coassociativity)} & \textrm{(counitality)} &
    \textrm{(cocommutativity)}
  }
\]
and the Frobenius and special axioms
  \[
  \xymatrixrowsep{1pt}
  \xymatrixcolsep{25pt}
  \xymatrix{
    \frobs{.1\textwidth} = \frobx{.1\textwidth} = \frobz{.1\textwidth} & \spec{.1\textwidth} =
    \idone{.1\textwidth} \\
    \textrm{(Frobenius)} & \textrm{(special)} 
  }
  \]
where $\swap{1em}$ is the braiding on $X \otimes X$.   
\end{definition}

\begin{definition}
  A \define{hypergraph category} is a symmetric monoidal category in which each
  object $X$ is equipped with a special commutative Frobenius structure
  $(X,\mu_X,\delta_X,\eta_X,\epsilon_X)$ such that 
\[
  \begin{array}{cc}
    \mu_{X\otimes Y} = (\mu_X \otimes \mu_Y)\circ(1_X \otimes \sigma_{YX}\otimes
    1_Y) \qquad&
    \eta_{X\otimes Y} = \eta_X \otimes \eta_Y \\
    \delta_{X\otimes Y} = (1_X \otimes \sigma_{XY}\otimes 1_Y)\circ(\delta_X
    \otimes \delta_Y) \qquad&
    \epsilon_{X\otimes Y} = \epsilon_X \otimes \epsilon_Y.
  \end{array}
\]
A functor $(F,\varphi)$ of hypergraph categories, or \define{hypergraph
functor}, is a strong symmetric monoidal functor $(F,\varphi)$ that preserves
the hypergraph structure. More precisely, the latter condition means that given
an object $X$, the special commutative Frobenius structure on $FX$ must be 
\[
  (FX,\enspace F\mu_X \circ \varphi_{X,X},\enspace  \varphi^{-1} \circ F\delta_X,\enspace  F\eta_X \circ
\varphi_1,\enspace  \varphi_1 \circ \epsilon_X).
\]
\end{definition}

Hypergraph categories were first defined by Carboni and Walters, under the name
well-supported compact closed categories \cite{Car91}.
\smallskip

\subsection*{Cospans}\hfill

\noindent
We give a fundamental example of hypergraph categories.

Let $\mc C$ be a category with finite colimits.
Recall that a \define{cospan} $X \stackrel{i}{\longrightarrow} N
\stackrel{o}{\longleftarrow} Y$  from $X$ to $Y$ in $\mathcal C$ is a pair of
morphisms with common codomain. We refer to $X$ and $Y$ as the \define{feet},
and $N$ as the \define{apex}.  Given two cospans $X
\stackrel{i}{\longrightarrow} N \stackrel{o}{\longleftarrow} Y$ and $X
\stackrel{i'}{\longrightarrow} N' \stackrel{o'}{\longleftarrow} Y$ with the same
feet, a \define{map of cospans} is a morphism $n\colon  N \to N'$ in $\mathcal
C$ between the apices such that
\[
  \xymatrix{
    & N \ar[dd]^n  \\
    X \ar[ur]^{i} \ar[dr]_{i'} && Y \ar[ul]_{o} \ar[dl]^{o'}\\
    & N'
  }
\]
commutes.

Cospans may be composed, up to isomorphism, using the pushout from the common
foot: given cospans $X \stackrel{i_X}{\longrightarrow} N
\stackrel{o_Y}{\longleftarrow} Y$ and $Y \stackrel{i_Y}{\longrightarrow} M
\stackrel{o_Z}{\longleftarrow} Z$, their composite cospan is $X \xrightarrow{j_N
  \circ i_X} N+_YM \xleftarrow{j_M\circ i_Z} Z$,
  where 
\[
  \xymatrix{
    && N+_YM \\
    & N \ar[ur]^{j_N} && M \ar[ul]_{j_M} \\
    \quad X \quad \ar[ur]^{i_X} && Y \ar[ul]_{o_Y} \ar[ur]^{i_Y} && \quad Z \quad \ar[ul]_{o_Z}
  }
\]
is a pushout square. 

Write $+$ for the coproduct in $\mc C$. We may consider $\mc C$ as a symmetric
monoidal category $(\mc C,+)$ with monoidal product given by the coproduct.
Also, given maps $f \maps A \to C$, $g \maps B \to C$ with common codomain, the
universal property of the coproduct gives a unique map $[f,g]\maps A+B \to C$;
we call this the \define{copairing} of $f$ and $g$. We write $!\maps \varnothing
\to X$ for the unique map from the initial object.

\begin{proposition}[{\cite[\textsection 2.2]{RSW08}}]
  Given a category $\mc C$ with finite colimits, we may define a hypergraph
  category $\cospan(\mc C)$ as follows:
\smallskip 

  \begin{center}
  \begin{tabular}{ |c| p{.65\textwidth}|}
      \hline
      \multicolumn{2}{|c|}{The hypergraph category $(\mathrm{Cospan(\mc C)},+)$} \\
    \hline
    \textbf{objects} & the objects of $\mathcal C$ \\ 
    \textbf{morphisms} & isomorphism classes of cospans in
    $\mathcal C$\\ 
  \textbf{composition} & given by pushout \\
  \textbf{monoidal product} & the coproduct in $\mathcal C$ \\
  \textbf{coherence maps} & inherited from $(\mc C,+)$\\
  \textbf{hypergraph maps} & $\mu = [1,1]$, $\eta = !$,
  $\delta = [1,1]^\opp$, $\epsilon = !^\opp$. \\
      \hline
  \end{tabular}
\end{center}
\smallskip
\end{proposition}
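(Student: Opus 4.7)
The plan is to verify the hypergraph category structure in four stages, working up from basic categorical structure through to the full hypergraph axioms.

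First, I would check that $\cospan(\mc C)$ is a well-defined category. Composition is independent of the choice of pushout, since any two pushouts of the same diagram are canonically isomorphic, giving equal isomorphism classes of composite cospans. Associativity of composition follows from the fact that an iterated pushout $(N +_Y M) +_Z P$ is canonically isomorphic to $N +_Y (M +_Z P)$, both presenting the colimit of the underlying zigzag. The identity on $X$ is the cospan $X \xrightarrow{1_X} X \xleftarrow{1_X} X$; composing with any $X \to N \leftarrow Y$ requires the pushout of $X \xleftarrow{1_X} X \xrightarrow{i} N$, which is $N$ itself, so identities behave correctly.

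Second, I would equip $\cospan(\mc C)$ with symmetric monoidal structure. On objects the monoidal product is the coproduct $+$ in $\mc C$; on cospans, I define $(X \to N \leftarrow Y) + (X' \to N' \leftarrow Y')$ to be $X+X' \to N+N' \leftarrow Y+Y'$ using the functoriality of $+$. This is functorial on cospans because the coproduct of two pushout squares is again a pushout square (colimits commute with colimits). The unit is the initial object $\varnothing$, and the associator, unitor, and braiding are inherited from $(\mc C,+)$, considered as degenerate cospans. Coherence axioms reduce to those already known for $(\mc C,+)$.

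Third, and most substantively, I would verify the commutative Frobenius monoid axioms for each object $X$ with the stated structure maps. All of these reduce to computing specific pushouts in $\mc C$ and invoking universal properties of coproducts and the initial object. Commutativity of $\mu_X = [1_X,1_X]$ follows from $[1,1] \circ \sigma_{X,X} = [1,1]$; coassociativity is dual. The unitality axiom requires computing the pushout of $X+X \leftarrow X+\varnothing = X \xrightarrow{1_X} X$, which gives $X$, and then checking the composite is the identity cospan---which it is, essentially by the universal property of the initial object. The Frobenius axiom requires computing the composite of $\mu_X \otimes 1_X$ with $1_X \otimes \delta_X$ and comparing it to $\delta_X \circ \mu_X$; both yield the cospan $X+X \to X \leftarrow X+X$ whose legs are both $[1_X,1_X]$. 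The special axiom amounts to the observation that the pushout of $X \xleftarrow{1_X} X \xrightarrow{1_X} X$ is $X$, so $\mu_X \circ \delta_X = 1_X$.

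Fourth, I would check the compatibility of the Frobenius structures with the monoidal product, i.e.\ the displayed equations $\mu_{X \otimes Y} = (\mu_X \otimes \mu_Y) \circ (1_X \otimes \sigma_{YX} \otimes 1_Y)$ and the three analogues. These follow from the corresponding universal property identity for the copairing, namely $[1_{X+Y},1_{X+Y}] = (1_X \otimes \sigma_{YX} \otimes 1_Y) ; ([1_X,1_X] + [1_Y,1_Y])$ (and the dual/unit versions), and are easy to verify on generators using the universal property of the coproduct. The main obstacle will simply be notational bookkeeping in the Frobenius axiom---computing the three-fold pushouts and identifying their apices as canonically isomorphic to $X$---but no genuinely hard step arises, since every axiom ultimately reduces to a universal property in $\mc C$.
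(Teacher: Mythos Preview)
The paper does not actually prove this proposition: it is stated as background material and attributed to \cite[\textsection 2.2]{RSW08}, with no argument supplied. Your proof plan is the standard direct verification and is correct in outline; it is presumably close to what the cited reference does. One small slip: in your special axiom check, the pushout defining $\mu_X \circ \delta_X$ is taken over the common foot $X+X$, so the relevant diagram is $X \xleftarrow{[1,1]} X+X \xrightarrow{[1,1]} X$, not $X \xleftarrow{1_X} X \xrightarrow{1_X} X$. The conclusion is unaffected, since the pushout of a map along itself, when that map is a (split) epimorphism, is again the codomain with identity legs.
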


Note that, as we do above, we shall frequently use representative cospans to
refer to their isomorphism class.  Furthermore, given $f \maps X \to Y$ in
$\mc C$, we also abuse notation by writing $f \in \mathrm{Cospan}(\mc C)$ for
the cospan $X \stackrel{f}\to Y \stackrel{1_Y}\leftarrow Y$, and $f^\opp$ for
the cospan $Y \stackrel{1_Y}\to Y \stackrel{f}\leftarrow X$.
\smallskip

\subsection*{Decorated cospans}\hfill

\noindent
Write $(\Set,\times)$ for the symmetric monoidal category of finite sets and
functions, where the monoidal product is the categorical product.

\begin{definition} \label{def:fcospans}
  Let $\mathcal C$ be a category with finite colimits, and
  \[
    (F,\varphi): (\mathcal C,+) \longrightarrow (\Set, \times)
  \]
  be a symmetric lax monoidal functor. We define a \define{decorated cospan}, or more
  precisely an $F$-decorated cospan, to be a pair 
  \[
    \left(
    \begin{aligned}
      \xymatrix{
	& N \\  
	X \ar[ur]^{i} && Y \ar[ul]_{o}
      }
    \end{aligned}
    ,
    \qquad
    \begin{aligned}
      \xymatrix{
	FN \\
	1 \ar[u]_{s}
      }
    \end{aligned}
    \right)
  \]
  comprising a cospan $X \stackrel{i}\rightarrow N \stackrel{o}\leftarrow Y$ in
  $\mathcal C$ together with an element $1 \stackrel{s}\rightarrow FN$ of
  the $F$-image $FN$ of the apex of the cospan. The element $1
  \stackrel{s}\rightarrow FN$ is known as the \define{decoration} of the decorated
  cospan. A morphism of decorated cospans 
  \[
    n: \big(X \stackrel{i_X}\longrightarrow N \stackrel{o_Y}\longleftarrow
    Y,\enspace 1 \stackrel{s}\longrightarrow FN\big) \longrightarrow \big(X
    \stackrel{i'_X}\longrightarrow N' \stackrel{o'_Y}\longleftarrow Y,\enspace 1
    \stackrel{s'}\longrightarrow FN'\big)
  \]
  is a morphism $n: N \to N'$ of cospans such that $Fn \circ s = s'$.
\end{definition}

On representatives of the isomorphism classes, composition of decorated
cospans is given by the usual composite cospan decorated with the composite
\[
  1 \stackrel{\lambda^{-1}}\longrightarrow 1 \otimes 1 \stackrel{s \otimes
  t}\longrightarrow FN \otimes FM \stackrel{\varphi_{N,M}}\longrightarrow
  F(N+M) \stackrel{F[j_N,j_M]}\longrightarrow F(N+_YM)
\]
of the tensor product of the decorations with the $F$-image of the copairing
of the pushout maps.

Each cospan may be given the empty decoration. The \define{empty decoration} is the
decoration $1 \stackrel{\varphi_1}{\rightarrow} F\varnothing
\stackrel{F!}{\rightarrow} FN$.

\begin{proposition}[{\cite[Theorem 3.4]{Fon15}}] \label{thm:fcospans}
  Let $\mathcal C$ be a category with finite colimits and $(F,\varphi):
  (\mathcal C,+) \to (\Set, \times)$ a symmetric lax monoidal functor. We
  define:
\begin{center}
  \begin{tabular}{| c | p{.65\textwidth} |}
    \hline
    \multicolumn{2}{|c|}{The hypergraph category $(F\mathrm{Cospan},+)$} \\
    \hline
    \textbf{objects} & the objects of $\mathcal C$ \\ 
    \textbf{morphisms} & isomorphism classes of $F$-decorated cospans in
    $\mathcal C$\\ 
    \textbf{composition} & given by pushout, as described above \\
    \textbf{monoidal product} & the coproduct in $\mathcal C$ \\
    \textbf{coherence maps} &  maps from $\cospan(\mc C)$ with empty decoration \\
    \textbf{hypergraph maps} & maps from $\cospan(\mc C)$ with empty decoration
    \\
    \hline
  \end{tabular}
\end{center}
\end{proposition}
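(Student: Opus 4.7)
The strategy is to lift the hypergraph structure of $\cospan(\mc C)$, which is already established, through the decorating functor $F$. All underlying cospan-level operations are inherited; the new content concerns only how decorations transform, and will reduce to the symmetric lax monoidal coherence of $(F,\varphi)$.

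I would begin by checking that composition is well-defined on isomorphism classes: given an isomorphism $n\colon N \to N'$ of cospans compatible with decorations ($Fn \circ s = s'$), naturality of $\varphi$ and functoriality of $F$ force the induced pushout map $N+_Y M \to N'+_Y M'$ to be compatible with the composite decoration. I would then verify the identity laws: the identity on $X$ is the cospan $X \xrightarrow{1_X} X \xleftarrow{1_X} X$ with empty decoration, and that composing with it preserves decorations follows from the unitality axioms of $(F,\varphi)$ together with the universal property of $\varnothing$ entering through $\varphi_1$.

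Next I would prove associativity. The underlying cospan composition is associative via the canonical isomorphism of iterated pushouts, so the question is whether the decoration formula
\[
  1 \longrightarrow 1 \otimes 1 \longrightarrow FN \otimes FM \longrightarrow F(N+M) \longrightarrow F(N+_Y M)
\]
agrees under either order of bracketing. This reduces to the associativity coherence square for $\varphi$, combined with naturality of $\varphi$ applied to the comparison of the two iterated pushout diagrams. For the symmetric monoidal structure, the tensor of decorated cospans uses $\varphi_{N,M} \circ (s \otimes t)$ on the coproduct apex $N + M$; the associator, unitors, and braiding are inherited from $\cospan(\mc C)$ equipped with empty decoration, and checking the symmetric monoidal axioms reduces directly to the symmetry coherence of $(F,\varphi)$.

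Finally, the candidate hypergraph maps $\mu_X, \eta_X, \delta_X, \epsilon_X$ on each object are the corresponding cospans from $\cospan(\mc C)$ decorated with the empty decoration. Since the Frobenius, commutative, and special axioms already hold in $\cospan(\mc C)$, and since the compatibility of $\mu_{X \otimes Y}, \eta_{X\otimes Y}, \delta_{X\otimes Y}, \epsilon_{X\otimes Y}$ with the monoidal product holds there too, it suffices to show that composing empty decorations yields empty decorations; this again reduces to the universal property of $\varnothing$ together with $\varphi_{\varnothing,\varnothing}$ and $\varphi_1$. The main obstacle I expect is the bookkeeping in the associativity verification: translating the canonical iso of the triple pushout into a precise sequence of naturality and coherence steps for $\varphi$ is where the technical heart of the argument lies, and essentially every other condition follows as a diagrammatic consequence of this same coherence.
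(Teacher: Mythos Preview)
Your outline is correct and is precisely the standard strategy: inherit all cospan-level structure from $\cospan(\mc C)$ and reduce every decoration-level check to the coherence axioms of the symmetric lax monoidal functor $(F,\varphi)$, with the empty-decoration closure handling the hypergraph maps. Note, however, that the present paper does not itself prove this proposition; it is quoted as background and attributed to \cite[Theorem~3.4]{Fon15}, so there is no in-paper argument to compare against beyond observing that your sketch matches the approach of that reference.
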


\begin{remark}
In previous expositions of decorated cospans we have let decorations lie in any
braided monoidal category. Sam Staton pointed out that it is general enough to
let decorations lie in the symmetric monoidal category $(\Set,\times)$. See
Appendix \ref{ssec.setdecorations} for details.
\end{remark}

We will also need the following lemma, showing that empty decorations act
trivially.

\begin{lemma}[{\cite[Proposition A.4]{Fon15}}] \label{lem.emptydecorations}
  Let $(X \stackrel{i_X}\longrightarrow N
\stackrel{o_Y}\longleftarrow Y,\enspace I \stackrel{s}\longrightarrow FN)$
be a decorated cospan, and suppose we have an empty-decorated cospan $(Y \stackrel{i_Y}\longrightarrow M \stackrel{o_Z}\longleftarrow Z,\enspace I
  \stackrel{\varphi\circ F!}\longrightarrow FM)$.
Then the composite of these decorated cospans is 
\[
  \big(X \xrightarrow{j_N \circ i_X} N+_YM \xleftarrow{j_M \circ o_Z} Z,\enspace 
  I \xrightarrow{Fj_N \circ s} F(N+_YM)\big).
\]
In particular, the decoration on the composite is the decoration $s$ pushed
forward along the $F$-image of the map $j_N\colon N \to N+_YM$ to become an
$F$-decoration on $N+_YM$. The analogous statement also holds for composition
with an empty-decorated cospan on the left.
\end{lemma}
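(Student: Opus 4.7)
The plan is to unfold the general composition formula for decorated cospans and simplify the resulting composite using (i) naturality of the monoidal coherence $\varphi$, (ii) the universal property of the coproduct in $\mathcal{C}$, and (iii) the unit coherence axioms for the lax symmetric monoidal functor $(F,\varphi)$.

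First, I would write out the composite decoration explicitly: applying the definition with $t = F!_M \circ \varphi_1 \colon 1 \to FM$, the composite is
\[
 F[j_N,j_M] \circ \varphi_{N,M} \circ \bigl(s \otimes (F!_M \circ \varphi_1)\bigr) \circ \lambda_1^{-1}.
\]
Separating the two factors of the tensor, this equals $F[j_N,j_M] \circ \varphi_{N,M} \circ (\mathrm{id}_{FN} \otimes F!_M) \circ (s \otimes \varphi_1) \circ \lambda_1^{-1}$. Naturality of $\varphi$ in its second argument lets us rewrite $\varphi_{N,M} \circ (\mathrm{id}_{FN} \otimes F!_M) = F(\mathrm{id}_N + !_M) \circ \varphi_{N,\varnothing}$, after which the leading arrows combine into a single $F$-image $F\bigl([j_N,j_M] \circ (\mathrm{id}_N + !_M)\bigr)$.

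Next, I would evaluate this composite in $\mathcal{C}$ using the universal property of the coproduct. Since $j_M \circ !_M$ is the unique map $\varnothing \to N+_Y M$, one has $[j_N,j_M] \circ (\mathrm{id}_N + !_M) = [j_N, j_M \circ !_M]$, and this equals $j_N \circ \rho_N$, where $\rho_N = [\mathrm{id}_N,!_N] \colon N + \varnothing \to N$ is the right unitor for the coproduct. So the composite decoration becomes
\[
 Fj_N \circ F\rho_N \circ \varphi_{N,\varnothing} \circ (\mathrm{id}_{FN} \otimes \varphi_1) \circ (s \otimes \mathrm{id}_1) \circ \lambda_1^{-1}.
\]

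Finally, by the right-unit axiom for a lax symmetric monoidal functor, $F\rho_N \circ \varphi_{N,\varnothing} \circ (\mathrm{id}_{FN} \otimes \varphi_1) = \rho_{FN}$. Then $\rho_{FN} \circ (s \otimes \mathrm{id}_1) \circ \lambda_1^{-1} = s$ in $(\Set,\times)$, using naturality of $\rho$ together with $\rho_1 = \lambda_1$. Hence the composite decoration simplifies to $Fj_N \circ s$, as claimed. The statement about composition with an empty-decorated cospan on the left follows by the symmetric argument (or by braiding), swapping the roles of $s$ and the empty decoration. The only real obstacle is the bookkeeping of coherence cells; once $[j_N,j_M] \circ (\mathrm{id}_N + !_M)$ is recognised as $j_N \circ \rho_N$, the rest is a direct application of the lax monoidal unit axioms.
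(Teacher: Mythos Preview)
Your argument is correct. The paper does not supply its own proof of this lemma but merely cites \cite[Proposition~A.4]{Fon15}; your unwinding of the composite decoration via naturality of $\varphi$, the identification $[j_N,j_M]\circ(\mathrm{id}_N + !_M) = j_N \circ \rho_N$, and the right-unit axiom for the lax monoidal functor is precisely the expected direct verification, and matches the approach in the cited source.
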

\smallskip

\subsection*{Corelations}\hfill

\noindent
Given sets $X$, $Y$, a relation $X \to Y$ is a subset of the product $X
\times Y$. Note that by the universal property of the product, spans $X
\leftarrow N \to Y$ are in one-to-one correspondence with functions $N \to X
\times Y$. When this map is monic, we say that the span is \emph{jointly monic}.
More abstractly then, we might say a relation is an isomorphism class of jointly
monic spans in the category of sets. Here we generalise the dual concept: these
are our so-called corelations.

The category theoretic study of relations is extensive; for a survey, see
\cite{Mil00}.  In our general setting, the key insight is the use of a
factorisation system. A factorisation system allows any morphism in a category
to be factored into the composite of two morphisms in a coherent way.

\begin{definition}
  A \define{factorisation system} $(\mathcal E,\mathcal M)$ in a category
  $\mathcal C$ comprises subcategories $\mathcal E$, $\mathcal M$ of $\mathcal
  C$ such that
  \begin{enumerate}[(i)]
    \item $\mathcal E$ and $\mathcal M$ contain all isomorphisms of $\mathcal
      C$.
    \item  every morphism $f \in \mathcal C$ admits a factorisation $f=m \circ
      e$, $e \in \mathcal E$, $m \in \mathcal M$.
\item given morphisms $f,f'$, with factorisations $f = m \circ e$, $f' = m' \circ
  e'$ of the above sort, for every $u$, $v$ such that $v \circ f = f' \circ u$, there exists a unique morphism $s$ such that
  \[
    \xymatrixcolsep{3pc}
    \xymatrixrowsep{3pc}
    \xymatrix{
      \ar[r]^e \ar[d]_u & \ar[r]^m \ar@{.>}[d]^{\exists! s} &  \ar[d]^v \\
       \ar[r]_{e'}& \ar[r]_{m'} & 
    }
  \]
  commutes.
  \end{enumerate}
\end{definition}

Observe that relations are just spans $X \leftarrow N \to Y$ in $\Set$ such that
$N \to X \times Y$ is an element of $\mathrm{Inj}$, the right factor in the
factorisation system $(\mathrm{Sur},\mathrm{Inj})$. Relations may thus be
generalised as spans such that the span maps jointly belong to some class $\mc
M$ of an $(\mc E,\mc M)$-factorisation system. We define corelations in the dual
manner.

\begin{definition}
  Let $\mathcal C$ be a category with finite colimits, and let $(\mathcal E,
  \mathcal M)$ be a factorisation system on $\mathcal C$. An $(\mathcal
  E,\mathcal M)$\define{-corelation} $X \to Y$ is a cospan $X
  \stackrel{i}\longrightarrow N \stackrel{o}\longleftarrow Y$ in $\mc C$ such
  that the copairing $[i,o]\maps X+Y \to N$ lies in $\mathcal E$.
\end{definition}

When the factorisation system is clear from context, we simply call $(\mathcal
E,\mathcal M)$-corelations `corelations'.

We also say that a cospan $X \stackrel{i}\longrightarrow N
\stackrel{o}\longleftarrow Y$ with the property that the copairing $[i,o]\maps
X+Y \to N$ lies in $\mathcal E$ is \define{jointly} $\mathcal E$\define{-like}.
Note that if a cospan is jointly $\mc E$-like then so are all isomorphic
cospans. Thus the property of being a corelation is closed under isomorphism of
cospans, and we again are often lazy with our language, referring to both
jointly $\mc E$-like cospans and their isomorphism classes as corelations. 

If $f\maps A \to N$ is a morphism with factorisation $f = m \circ e$, write
$\overline N$ for the object such that $e\maps A \to \overline N$ and $m\maps
\overline N \to N$. Now, given a cospan $X \stackrel{i_X}{\longrightarrow} N
\stackrel{o_Y}{\longleftarrow} Y$, we may use the factorisation system to write
the copairing $[i_X,o_Y]\maps X+Y \to N$ as
\[
  X+Y \stackrel{e}{\longrightarrow} \overline{N} \stackrel{m}{\longrightarrow}
  N.
\]
From the universal property of the coproduct, we also have maps $\iota_X\maps X
\to X+Y$ and $\iota_Y\maps Y \to X+Y$. We then call the corelation 
\[
  X \stackrel{e\circ \iota_X}{\longrightarrow} \overline{N} \stackrel{e \circ
  \iota_Y}{\longleftarrow} Y
\]
the $\mathcal E$\define{-part} of the above cospan. On occasion we will also
write $e\maps X+Y \to \overline N$ for the same corelation.

We compose corelations by taking the $\mathcal E$-part of their composite
cospan. That is, given corelations $X \stackrel{i_X}{\longrightarrow} N
\stackrel{o_Y}{\longleftarrow} Y$ and $Y \stackrel{i_Y}{\longrightarrow} M
\stackrel{o_Z}{\longleftarrow} Z$, their composite is given by the cospan $X
\xrightarrow{e\circ\iota_X} \overline{N+_YM} \xleftarrow{e \circ \iota_Z} Z$ in the commutative diagram
\[
  \begin{tikzcd}[row sep=7ex,column sep=7ex]
    && N+_YM \\
    && \overline{N+_YM} \arrow[u,pos=.4,"m"] \\
    & N \arrow[uur,"j_N"] & X+Z \arrow[from=dll,pos=.35,swap,"\iota_X"]
    \arrow[from=drr,pos=.35,"\iota_Z"]
    \arrow[u,"e"] & 
    M \arrow[uul,swap,"j_M"] \\
    X \arrow[ur,"i_X"] && Y
    \arrow[ul,crossing over,pos=.35,"o_Y"] \arrow[ur,crossing
    over,pos=.35,swap,"i_Y"] && Z,
    \arrow[ul,swap,"o_Z"]
  \end{tikzcd}
\]
where $m \circ e$ is the $(\mc E,\mc M)$-factorisation of $[j_N\circ i_X,j_M
\circ o_Z]\maps X+Z \to N+_YM$. It is not difficult to show, that this composite
is unique up to isomorphism. 

For nice categorical properties, like associativity under composition, it is
important that our factorisation system be costable.
\begin{definition}
  Given a category $\mc C$, we say that a subcategory $\mc M$ is \define{stable
  under pushout} if for every pushout square
  \[
    \xymatrixcolsep{3pc}
    \xymatrixrowsep{3pc}
    \xymatrix{
      \ar[r]^j & \\
      \ar[u] \ar[r]^m &  \ar[u]
    }
  \]
  such that $m \in \mathcal M$, we also have that $j \in \mathcal M$. We say
  that a factorisation system $(\mc E,\mc M)$ is \define{costable} if $\mc M$ is
  stable under pushout.
\end{definition}

\begin{proposition} \label{prop.corelcat}
  Let $\mc C$ be a category with finite colimits and a costable factorisation
  system $(\mc E,\mc M)$. Then there exists a category $\corel_{(\mc E,\mc
  M)}(\mc C)$ with the objects of $\mc C$ as objects, $(\mc E,\mc
  M)$-corelations as morphisms, and composition given as above. Moreover, the
  map taking a cospan to its $\mc E$-part defines a functor $\square\maps
  \cospan(\mc C) \to \corel(\mc C)$.
\end{proposition}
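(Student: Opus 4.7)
The plan is to realise $\corel_{(\mc E,\mc M)}(\mc C)$ as a quotient of $\cospan(\mc C)$, with $\square$ serving as the quotient functor. To this end, take the composition of corelations and the identity corelation on $X$ to be as defined in the preceding discussion: compose representative cospans in $\cospan(\mc C)$ and then take the $\mc E$-part, and let $\mathrm{id}_X := \square(X \xrightarrow{1_X} X \xleftarrow{1_X} X)$. Since pushouts are functorial up to canonical isomorphism and $(\mc E,\mc M)$-factorisations are unique up to unique isomorphism, these operations are well-defined on isomorphism classes.

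The crux of the argument is the following compatibility lemma: for any cospan $c\maps X \to N \leftarrow Y$ with $\mc E$-part $c'\maps X \to \overline N \leftarrow Y$, and any cospan $d\maps Y \to M \leftarrow Z$, the cospans $c \cdot d$ and $c' \cdot d$ have the same $\mc E$-part, and symmetrically on the other side. Writing $m\maps \overline N \to N$ for the $\mc M$-part of the copairing of $c$, the pasting law for pushouts identifies the induced comparison map $\overline N +_Y M \to N +_Y M$ with the pushout of $m$ along $\overline N \to \overline N +_Y M$; by the costability of $(\mc E,\mc M)$, it therefore lies in $\mc M$. Factoring the copairing $X+Z \to \overline N +_Y M$ of $c' \cdot d$ as $X+Z \xrightarrow{e''} Q \xrightarrow{m''} \overline N +_Y M$ and post-composing with this comparison yields a factorisation of the copairing of $c \cdot d$ whose $\mc E$-part is again $e''$, since the composite of two $\mc M$-morphisms lies in $\mc M$. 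Hence $Q$ is the $\mc E$-part of both composites.

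Armed with the lemma, the category axioms reduce to the corresponding facts in $\cospan(\mc C)$. For associativity, two applications of the lemma show that both $(r \circ s) \circ t$ and $r \circ (s \circ t)$ coincide with $\square$ applied to the bracketed triple cospan composite, which agree by associativity in $\cospan(\mc C)$. For the unit laws, the lemma yields $\square(\square(1_X) \cdot c) = \square(1_X \cdot c) = \square(c) = c$ for every corelation $c\maps X \to Y$, and likewise on the right. Functoriality of $\square\maps \cospan(\mc C) \to \corel(\mc C)$ is then immediate: identities are preserved by definition, and the composition law follows from the lemma applied once on each side.

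The main obstacle is the compatibility lemma. This is exactly where the costability hypothesis plays its role: without $\mc M$ being stable under pushout, the natural comparison between the two candidate apex objects need not be an $\mc M$-morphism, and the equality of $\mc E$-parts would fail. Every remaining step is a direct diagram chase or transfer from $\cospan(\mc C)$ once this obstruction is cleared.
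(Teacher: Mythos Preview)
Your proof is correct and follows essentially the same strategy the paper adopts: although the proposition itself is deferred to a citation, the paper later recovers the argument as the special case $A=1_{\mc C}$, $(\mc E,\mc M)=(\mc C,\mc I_{\mc C})$ of Lemma~\ref{lem.corelfuncomposition} and the remark immediately following it, showing that $\square$ preserves composition and then transporting associativity and unitality from $\cospan(\mc C)$. Your one-sided compatibility lemma is a mild repackaging of that two-sided preservation statement, but the crucial step---costability forces the comparison map $\overline N+_YM \to N+_YM$ into $\mc M$, so uniqueness of factorisations identifies the $\mc E$-parts---is exactly the paper's.
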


We will drop explicit reference to the factorisation system when context
allows, simply writing $\mathrm{Corel}(\mc C)$.

This is a standard result. For instance, a bicategorical version of the dual
theorem, for relations, can be found in \cite{JW00}. For more intuition
regarding corelations and their relationship to special commutative Frobenius
monoids, see \cite{CF17}.

\begin{examples} \label{ex.factsysts}
  Write $\mathcal I_{\mathcal C}$ for the wide subcategory of $\mathcal C$
  containing exactly the isomorphisms of $\mathcal C$. Two seemingly trivial,
  but important, examples of costable factorisation systems are $(\mathcal
  I_{\mathcal C}, \mathcal C)$ and $(\mathcal C, \mathcal I_{\mathcal C})$. The
  category $\corel_{(\mathcal I_{\mc C}, \mc C)}(\mc C)$ is equivalent to the
  terminal category, while $\corel_{(\mc C,\mc I_{\mc C})}(\mc C)$ is isomorphic
  to $\cospan(\mc C)$. 

  Another example of costable factorisation system is the epi-mono factorisation
  system $(\mathrm{Sur},\mathrm{Inj})$ in $\Set$, whence corelations $X \to Y$
  are equivalence relations on $X+Y$. 
  
  This generalises to any topos.  Indeed, Lack and Soboci\'nski showed that
  monomorphisms are stable under pushout in any adhesive category \cite{LS04}.
  Since any topos is both a regular category and an adhesive category
  \cite{LS06,Lac11}, the regular epimorphism-monomorphism factorisation system
  in any topos is costable.
  
  Another class of examples comes from coregular categories. A coregular
  category is by definition a category that has finite colimits and a costable
  epimorphism-regular monomorphism factorisation system. Examples of these
  include the category of topological spaces and continuous maps, as well as
  $\Set^\opp$, any cotopos, and so on.
\end{examples}

\section{Corelations form hypergraph categories} \label{sec.corels}

The focus of this paper is not just the construction of categories, but
\emph{hypergraph} categories.  In fact, all corelation categories come equipped
with this extra structure. In this section we explain the relevant data, and
tackle some of the monoidal considerations. We shall complete the proof that we
have a hypergraph category simultaneously with our consideration of functors in
the next section.

The hypergraph structure on $\corel(\mc C)$ is that which makes the canonical functor
\[
  \square\maps \cospan(\mc C) \longrightarrow \corel(\mc C)
\]
a hypergraph functor. Indeed, we define the coherence and Frobenius maps of
$\corel(\mc C)$ to be their image under this map. For the monoidal product we
again use the coproduct in $\mc C$; the monoidal product of two corelations is
their monoidal product as cospans.

\begin{theorem} \label{thm.cospantocorel}
  Let $\mathcal C$ be a category with finite colimits, and let $(\mathcal E,
  \mathcal M)$ be a costable factorisation system. Then there exists a hypergraph category
  $\mathrm{Corel}(\mathcal C)$ with 
  \begin{center}
    \begin{tabular}{| c | p{.65\textwidth} |}
      \hline
      \multicolumn{2}{|c|}{The hypergraph category $\big(\mathrm{Corel}_{(\mc
      E,\mc M)}(\mc C),+\big)$} \\
      \hline
      \textbf{objects} & the objects of $\mathcal C$ \\ 
      \textbf{morphisms} & isomorphism classes of $(\mc E,\mc M)$-corelations in $\mathcal C$\\ 
      \textbf{composition} & given by the $\mc E$-part of pushout \\
      \textbf{monoidal product} & the coproduct in $\mathcal C$ \\
      \textbf{coherence maps} & inherited from $\cospan(\mc C)$  \\
      \textbf{hypergraph maps} & inherited from $\cospan(\mc C)$ \\
      \hline
    \end{tabular}
  \end{center}  
  \smallskip
\end{theorem}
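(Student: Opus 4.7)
The plan is to promote the functor $\square\maps \cospan(\mc C) \to \corel(\mc C)$ of Proposition \ref{prop.corelcat} to an identity-on-objects, strict symmetric monoidal functor, and then inherit all of the hypergraph structure along it from $\cospan(\mc C)$. Since every axiom of a hypergraph category is an equation built from composites and monoidal products, a strict symmetric monoidal, identity-on-objects functor transports them automatically, so the whole proof reduces to establishing the monoidal compatibilities of $\square$.

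First I would equip $\corel(\mc C)$ with a symmetric monoidal structure using the coproduct $+$ in $\mc C$ on objects, and on morphisms by taking the coproduct of the representing cospans. The nontrivial step is that the coproduct of corelations is a corelation: given jointly $\mc E$-like cospans $X_i \to N_i \leftarrow Y_i$ with copairings $e_i \in \mc E$, the copairing of the coproduct cospan is, up to the canonical symmetry isomorphism of $\mc C$, equal to $e_1 + e_2$. I would therefore prove the closure lemma that $\mc E$ is stable under binary coproducts. Given a factorisation $e_1 + e_2 = m \circ e'$, precomposing with each coproduct injection yields squares $\xymatrix{{\cdot} \ar[r]^{e_i} \ar[d] & \cdot \ar[d]^{m} \\ \cdot \ar[r] & \cdot}$, each admitting a unique diagonal filler by orthogonality; the copairing of these fillers inverts $m$, so $m$ is an isomorphism and $e_1+e_2 \in \mc E$. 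The dual argument, applied to $\mc M$ (which is the right class of a factorisation system and thus satisfies the same orthogonality property), shows that $\mc M$ is also stable under coproducts.

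Second, I would verify that $\square$ is strict symmetric monoidal on morphisms, that is, the $\mc E$-part of a coproduct of cospans is the coproduct of the individual $\mc E$-parts. Given cospans with copairing factorisations $[i_j,o_j] = m_j \circ e_j$, the map $(m_1+m_2) \circ (e_1+e_2)$ provides a factorisation of the copairing of the coproduct cospan into a morphism of $\mc E$ followed by one of $\mc M$, by the closure lemmas above; uniqueness of $(\mc E,\mc M)$-factorisations up to isomorphism then gives the canonical identification. The same essential uniqueness makes the coherence isomorphisms (associator, unitor, braiding) of $\cospan(\mc C)$ project to coherence isomorphisms of $\corel(\mc C)$, giving a symmetric monoidal category $(\corel(\mc C), +)$ and making $\square$ strict symmetric monoidal.

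Finally, I would define the hypergraph data $(\mu_X, \eta_X, \delta_X, \epsilon_X)$ on each object $X$ as the $\square$-images of the corresponding cospan data $[1,1]$, $!$, $[1,1]^{\opp}$, $!^{\opp}$. All hypergraph axioms---commutative monoid, cocommutative comonoid, Frobenius, speciality, and the monoidal compatibilities $\mu_{X\otimes Y} = (\mu_X\ot\mu_Y)\circ(1\ot\sigma\ot 1)$ and its kin---hold in $\cospan(\mc C)$ by the cited proposition, and they are preserved by the strict symmetric monoidal functor $\square$, so they hold in $\corel(\mc C)$. The main obstacle is the closure-under-coproducts lemma for $\mc E$ (and dually $\mc M$) and the ensuing compatibility of $\mc E$-parts with coproducts in step two; these are where the diagonal-filler property of $(\mc E, \mc M)$ really does the work, and everything else is formal transport along $\square$.
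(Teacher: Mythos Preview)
Your overall strategy is exactly the paper's: establish that $\square\maps\cospan(\mc C)\to\corel(\mc C)$ preserves composition and monoidal product, then transport every equational axiom along this identity-on-objects surjection. The closure lemma for $\mc E$ under $+$ is also proved the same way (the paper's Lemma~\ref{lem.monfact}).

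There is, however, one genuine gap. Your ``dual argument'' for closure of $\mc M$ under coproducts does not go through. In the $\mc E$ case the filler is obtained from a square whose top row is the factorisation $e_i=1\circ e_i$; the nontrivial $\mc M$-part of $e_1+e_2$ then gets inverted by the copaired fillers. Dually, for $m_1,m_2\in\mc M$ one has $m_i=m_i\circ 1$ with trivial $\mc E$-part, so the diagonal filler is forced to be $e\circ\iota_{A_i}$ and yields no new information; to run the argument the other way would require coproduct \emph{projections}, which do not exist. More abstractly, in any factorisation system the left class is closed under colimits in the arrow category while the right class is closed under limits, so orthogonality alone gives $\mc M$ closed under products, not coproducts.

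This is precisely where the costability hypothesis earns its keep, and your proposal never invokes it. The paper's Lemma~\ref{lem.mcoproductsmc} obtains $m+1\in\mc M$ from the pushout square
\[
  \xymatrix{
    A+C \ar[r]^{m+1} & B+C \\
    A \ar[u]^{\iota} \ar[r]_m & B \ar[u]_{\iota}
  }
\]
together with stability of $\mc M$ under pushout, and then $m+m'=(1+m')\circ(m+1)$. Once you replace your dual-orthogonality claim with this argument, the rest of your proof goes through and coincides with the paper's.
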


\subsection*{Proof strategy:} 
We will prove this theorem in two stages. The first stage, which will be the
rest of this section, is focussed on monoidal considerations. We prove two
lemmas, which respectively show that $\mc E$ and $\mc M$ are closed under $+$.
In particular, that $\mc E$ is closed (Lemma \ref{lem.monfact}) implies that the
proposed monoidal product on $\corel(\mc C)$ is independent of choice of
representative corelation, and hence well defined as a function. For the second
stage, it remains to check a number of axioms: functoriality of the monoidal
product, naturality of the coherence maps, the coherence axioms for symmetric
monoidal categories, the Frobenius laws. We do this in the next section.

Our strategy for the axiom checking of Stage 2 will be to show that the
surjective function from cospans to corelations defined by taking a cospan to
its jointly $\mc E$-part preserves both composition and the monoidal product.
This then implies that to evaluate an expression in the monoidal category of
corelations, we may simply evaluate it in the monoidal category of cospans, and
then take the $\mc E$-part. Thus if an equation is true for cospans, it is true
for corelations.

Instead of proving just this, however, we will prove a generalisation regarding
an analogous map between any two corelation categories. Such a map exists
whenever we have two corelation categories $\corel_{(\mc E,\mc M)}(\mc C)$ and
$\corel_{(\mc E',\mc M')}(\mc C')$ and a colimit
preserving functor $A\maps \mc C \to \mc C'$ such that the image of $\mc M$ lies
in $\mc M'$. As $(\mc C,\mc I_{\mc C})$-corelations are just cospans, this
reduces to the desired special case by taking the domain to be the category of
$(\mc C,\mc I_{\mc C})$-corelations, $\mc C'$ to be equal to $\mc C$, and $A$ to
be the identity functor. But the generality is not spurious: it has the
advantage of proving the existence of a class of hypergraph functors between
corelation categories in the same fell swoop. Although a touch convoluted, this strategy is worth the pause for thought. We
will use it once again for \emph{decorated} corelations, to great economy.
\smallskip

First though, back to Stage 1: monoidal considerations. As we are concerned with
building monoidal categories of corelations, it will be important that our
factorisation systems are so-named monoidal factorisation systems. These are
factorisation systems $(\mc E,\mc M)$ such that $(\mc E,\ot)$ is a monoidal
category. Luckily, when the monoidal product is the coproduct, \emph{all}
factorisation systems are monoidal factorisation systems.

\begin{lemma} \label{lem.monfact}
  Let $\mc C$ be a category with finite coproducts, and let $(\mc E, \mc M)$ be a
  factorisation system on $\mc C$. Then $(\mc E,+)$ is a symmetric monoidal
  category.
\end{lemma}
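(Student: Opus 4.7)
Since $(\mc C,+)$ is already a symmetric monoidal category, and $\mc E$ is a wide subcategory of $\mc C$ (wide because, by axiom (i), $\mc E$ contains all isomorphisms, and in particular every identity), the plan is to show that the monoidal structure of $(\mc C,+)$ restricts to $\mc E$. Concretely, I need to check two things: (a) the structural isomorphisms of $(\mc C,+)$---associators, unitors, and the symmetry---lie in $\mc E$, and (b) the bifunctor $+\maps\mc C\times\mc C\to\mc C$ restricts to a bifunctor $\mc E\times\mc E\to\mc E$. Once both hold, the coherence diagrams automatically commute in $\mc E$ because they already commute in $\mc C$, so $(\mc E,+)$ inherits the symmetric monoidal structure from $(\mc C,+)$.

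Part (a) is immediate from axiom (i) of the definition of a factorisation system. For part (b), the content is showing that if $e_1\maps A_1\to B_1$ and $e_2\maps A_2\to B_2$ lie in $\mc E$, then $e_1+e_2\maps A_1+A_2\to B_1+B_2$ also lies in $\mc E$; functoriality (preservation of identities and composition) then follows from functoriality of $+$ on $\mc C$ together with the fact that $\mc E$ is closed under composition.

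The main obstacle, and the only nontrivial step, is showing closure of $\mc E$ under $+$. The plan is to use the orthogonality formulation of a factorisation system, namely that $e\in\mc E$ if and only if $e$ has the unique left lifting property against every $m\in\mc M$. (This equivalence follows from axiom (iii) by taking the trivial factorisations $e=\idn\circ e$ and $m=m\circ\idn$, which is permitted because $\mc E$ and $\mc M$ each contain all identities.) So, given $m\in\mc M$ and a commuting square
\[
\xymatrix{
A_1+A_2 \ar[r]^-{[u_1,u_2]} \ar[d]_{e_1+e_2} & X \ar[d]^m \\
B_1+B_2 \ar[r]_-{[v_1,v_2]} & Y,
}
\]
precompose with each coproduct injection to obtain two squares
\[
\xymatrix{
A_i \ar[r]^-{u_i} \ar[d]_{e_i} & X \ar[d]^m \\
B_i \ar[r]_-{v_i} & Y.
}
\]
Each $e_i\in\mc E$ supplies a unique diagonal filler $s_i\maps B_i\to X$, and copairing yields $[s_1,s_2]\maps B_1+B_2\to X$, which is easily seen to fill the original square. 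Uniqueness of the filler follows from uniqueness of each $s_i$ by the universal property of the coproduct. Hence $e_1+e_2$ has the left lifting property against every $m\in\mc M$, and therefore $e_1+e_2\in\mc E$, completing the proof.
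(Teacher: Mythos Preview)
Your proof is correct but takes a different route from the paper's. Both reduce to showing that $\mc E$ is closed under $+$; the difference lies in how this closure is established. The paper argues directly: it factors $f+g$ as $m\circ e$ and then shows $m$ is an isomorphism by using axiom (iii) twice (once for $f$, once for $g$) to build maps $x,y$ whose copairing $[x,y]$ is checked to be a two-sided inverse of $m$. You instead invoke the orthogonality characterisation of $\mc E$ and verify the unique left lifting property of $e_1+e_2$ against an arbitrary $m\in\mc M$ by splitting the square along the coproduct injections. Your argument is the standard proof that the left class of an orthogonal factorisation system is closed under colimits in the arrow category (here specialised to coproducts), and is arguably more conceptual; the paper's argument is more explicit and self-contained with respect to the axioms as stated. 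One small point: your parenthetical justification of the orthogonality characterisation only covers the forward direction ($e\in\mc E\Rightarrow e\perp\mc M$); the converse, which is what you actually use, requires factoring $e$ and showing the $\mc M$-part is an isomorphism via a retraction argument---essentially the same manoeuvre the paper carries out. This is standard, but worth a sentence if you want the proof to be fully self-contained.
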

\begin{proof}
  The only thing to check is that $\mc E$ is closed under $+$. That is, given
  $f\maps A \to B$ and $g\maps C \to D$ in $\mc E$, we wish to show that
  $f+g\maps A+C \to B+D$, defined in $\mc C$, is also a morphism in $\mc E$. 

  Let $f+g$ have factorisation $A+C \stackrel{e}\longrightarrow \overline{B+D}
  \stackrel{m}\longrightarrow B+D$, where $e \in \mc E$ and $m \in \mc
  M$. We will prove that $m$ is an isomorphism. To construct an inverse, recall
  that by definition, as $f$ and $g$ lie in $\mc E$, there exist morphisms
  $x\maps B \to \overline{B+D}$ and $y\maps D \to \overline{B+D}$ such that
  \[ \label{eq.coreltensor}
    \begin{gathered}
    \xymatrixcolsep{2pc}
    \xymatrixrowsep{2pc}
    \xymatrix{
      A \ar[r]^f \ar[d] & B \ar@{=}[r] \ar@{.>}[d]^x & B
      \ar[d] \\
      A+C \ar[r]_{e}&\overline{B+D} \ar[r]_{m} & B+D
    }
  \end{gathered}
    \qquad \mbox{and} \qquad
    \begin{gathered}
    \xymatrixcolsep{2pc}
    \xymatrixrowsep{2pc}
    \xymatrix{
      C \ar[r]^g \ar[d] & D \ar@{=}[r] \ar@{.>}[d]^y & D
      \ar[d] \\
      A+C \ar[r]_{e}&\overline{B+D} \ar[r]_{m} & B+D
    }
  \end{gathered}
    \tag{1}
  \]
  The copairing $[x,y]$ is an inverse to $m$. 
  
  Indeed, taking the coproduct of the top rows of the two diagrams above and the
  copairings of the vertical maps gives the commutative diagram
  \[
    \xymatrix{
      A+C \ar[r]^{f+g} \ar@{=}[d] & B+D \ar@{=}[r] \ar[d]_{[x,y]} & B+D \ar@{=}[d] \\
      A+C \ar[r]^{e} & \overline{B+D} \ar[r]^{m} & B+D
    }
  \]
  Reading the right-hand square immediately gives $m \circ [x,y] =1$.
  
  Conversely, to see that $[x,y] \circ m = 1$, remember that by definition $f+g
  = m \circ e$. So the left-hand square above implies that
  \[
    \xymatrixcolsep{2pc}
    \xymatrixrowsep{2pc}
    \xymatrix{
      A+C \ar[r]^e \ar@{=}[d] & \overline{B+D} \ar[d]^{[x,y] \circ m} \\
      A+C \ar[r]_{e}&\overline{B+D} 
    }
  \]
  commutes. But by the universal property of factorisation systems, there is a
  unique map $\overline{B+D} \to \overline{B+D}$ such that this diagram
  commutes, and clearly the identity map also suffices. Thus $[x,y] \circ m =
  1$.
\end{proof}

The analogous fact for $\mc M$ is also important. It follows from stability under pushout.

\begin{lemma} \label{lem.mcoproductsmc}
  Let $\mathcal C$ be a category with finite colimits, and let $\mathcal M$ be a
  subcategory of $\mathcal C$ stable under pushouts and containing all
  isomorphisms. Then $(\mc M,+)$ is a symmetric monoidal category.
\end{lemma}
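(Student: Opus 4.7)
The plan is essentially dual in spirit to Lemma \ref{lem.monfact}, but this time the work is done by the pushout-stability hypothesis rather than by the factorisation system. Since $\mc M$ is a wide subcategory of $\mc C$ (it contains all identities, being a subcategory containing all isomorphisms), and since the coproduct functor $+\maps \mc C\times\mc C\to \mc C$ is already symmetric monoidal with associator, unitor, and braiding given by isomorphisms, the entire monoidal structure on $\mc C$ restricts to $\mc M$ as soon as we know that $+$ sends pairs of $\mc M$-morphisms to $\mc M$-morphisms. Indeed, the coherence isomorphisms of $(\mc C,+)$ all lie in $\mc M$ automatically, so the coherence axioms transfer for free.

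Thus the whole content of the lemma is to check that if $m\maps A\to B$ and $n\maps C\to D$ lie in $\mc M$, then $m+n\maps A+C\to B+D$ lies in $\mc M$. I would do this by factoring
\[
  m+n \;=\; (m+1_D)\circ (1_A+n),
\]
and showing each factor is in $\mc M$ individually; closure then follows because $\mc M$ is a subcategory and hence closed under composition.

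The key observation is that each of these factors is a pushout of an $\mc M$-morphism. For $1_A+n$, the square
\[
  \xymatrix{
    C \ar[r]^-{\iota_C} \ar[d]_n & A+C \ar[d]^{1_A+n} \\
    D \ar[r]_-{\iota_D} & A+D
  }
\]
is a pushout: $A+D$ is the coproduct of $A$ and $D$, so it is the pushout of $A \leftarrow \varnothing \to D$, and comparing with the evident pushout cube built from $A \leftarrow \varnothing \to C \xrightarrow{n} D$ identifies the above as a pushout square in $\mc C$. Since $n\in\mc M$ and $\mc M$ is stable under pushout, $1_A+n\in\mc M$. The symmetric argument applied to $m$ gives $m+1_D\in\mc M$.

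The only potential pitfall is sloppiness about which square is the pushout: one must verify that the canonical map from the pushout of $A \xleftarrow{\iota_A}\varnothing\xrightarrow{\iota_C} C$ along $n$ really is the coproduct injection $\iota_D\maps D\to A+D$ with opposite side $1_A+n$. But this is immediate from the universal property of the coproduct, and once this is granted the lemma is complete.
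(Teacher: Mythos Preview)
Your proof is correct and follows essentially the same approach as the paper: reduce to showing $m+n\in\mc M$, factor it as a composite of $m+1$ and $1+n$, and exhibit each factor as a pushout of an $\mc M$-morphism along a coproduct inclusion. The only differences are cosmetic---you compose in the other order and spell out the pushout verification in slightly more detail than the paper does.
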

\begin{proof}
  It is enough to show that for all morphisms $m,m' \in \mc M$ we have $m+m'$ in
  $\mc M$. Since $\mc M$ contains all isomorphisms, the coherence maps are
  inherited from $\mc C$. The required axioms---the functoriality of the tensor
  product, the naturality of the coherence maps, and the coherence laws---are
  also inherited as they hold in $\mc C$.

  To see $m+m'$ is in $\mc M$, simply observe that we have the pushout square
  \[
  \xymatrixcolsep{3pc}
  \xymatrixrowsep{2pc}
    \xymatrix{
      A+C \ar[r]^{m+1} & B+C \\
      A \ar[r]^m \ar[u]^{\iota} & B \ar[u]_{\iota} \\
    }
  \]
  in $\mc C$. As $\mc M$ is stable under pushout, $m+1 \in \mc M$. Similarly,
  $1+m' \in \mc M$. Thus their composite $m+m'$ lies in $\mc M$, as required.
\end{proof}

\begin{remark}
An analogous argument shows that pushouts of maps $m+_Ym'$ also lie in $\mc M$.
Using this fact it is not difficult to show the associativity of composition of
corelations---the key point is that factorisation commutes with pushouts. 
\end{remark}

\section{Functors between corelation categories} \label{sec.corelfunctors}

To construct a functor between cospan categories one may start with a
colimit-preserving functor between the underlying categories. Corelations are
cospans where we forget the $\mc M$-part of each cospan. Hence for functors
between corelation categories, we require not just a colimit-preserving functor
but, loosely speaking, also that we don't forget more in the domain category
than in the codomain category.

We devote the next few pages to proving the following proposition. Along the way
we prove, as promised, that corelation categories are well-defined hypergraph
categories.

\begin{proposition} \label{prop.corelfunctors}
  Let $\mathcal C$, $\mathcal C'$ have finite colimits and respective costable
  factorisation systems $(\mathcal E, \mathcal M)$, $(\mathcal E', \mathcal
  M')$. Further let $A\maps \mathcal C \to \mathcal C'$ be a functor that
  preserves finite colimits and such that the image of $\mathcal M$ lies in
  $\mathcal M'$.

  Then we may define a hypergraph functor $\square\maps \corel(\mathcal C) \to
  \corel(\mathcal C')$ sending each object $X$ in $\corel(\mathcal C)$ to $AX$
  in $\corel(\mc C')$ and each corelation 
  \[
    X \stackrel{i_X}{\longrightarrow} N \stackrel{o_Y}{\longleftarrow} Y 
  \]
  to the $\mc E'$-part
  \[
    AX \xrightarrow{e'\circ\iota_{AX}} \overline{AN}
    \xleftarrow{e'\circ\iota_{AY}} AY.
  \]
  of the image cospan. The coherence maps are the $\mc E'$-part
  $\overline{\kappa_{X,Y}}$ of the isomorphisms $\kappa_{X,Y}\maps AX+AY \to
  A(X+Y)$ given as $A$ preserves colimits.
\end{proposition}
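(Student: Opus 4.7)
My approach is to factor the construction through the cospan level. The finite-colimit-preserving functor $A$ induces a hypergraph functor $\hat A\maps \cospan(\mc C) \to \cospan(\mc C')$ in the standard way, sending a cospan $X \to N \leftarrow Y$ to $AX \to AN \leftarrow AY$. Composing with the $\mc E'$-part quotient $\square'\maps \cospan(\mc C') \to \corel(\mc C')$ gives a map from cospans in $\mc C$ to corelations in $\mc C'$, and I will show it factors uniquely through the $\mc E$-part quotient $\square\maps \cospan(\mc C) \to \corel(\mc C)$. The resulting factor is the desired hypergraph functor, and the verification of every axiom is reduced to the corresponding statement for $\hat A$.

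The cornerstone is a \emph{factorisation transport lemma}: given any cospan $X \to N \leftarrow Y$ in $\mc C'$ and a factorisation of its copairing as $X + Y \xrightarrow{u} K \xrightarrow{n} N$ with $n \in \mc M'$, the $\mc E'$-part of the cospan to $N$ is canonically isomorphic to the $\mc E'$-part of the cospan to $K$. This is immediate from uniqueness of $(\mc E', \mc M')$-factorisations, since any factorisation $u = m \circ e$ with $e \in \mc E'$, $m \in \mc M'$ composes with $n$ to give a valid factorisation of the copairing. I use this lemma twice. First, for well-definedness on morphisms: any two representative cospans of a single corelation sit above a common $\mc E$-part via $\mc M$-maps, and applying $\hat A$ produces $\mc M'$-maps (since $A$ sends $\mc M$ into $\mc M'$), so the lemma identifies both images as the $\mc E'$-part of $AX \to A\overline N \leftarrow AY$, as claimed. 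Second, for composition preservation: given corelations $c_1, c_2$ with apices $N, M$, the composite in $\corel(\mc C)$ has apex $\overline{N +_Y M}$, and its image under the proposed functor is the $\mc E'$-part of $AX \to A(N +_Y M) \leftarrow AZ$; whereas the composite in $\corel(\mc C')$ of the images is the $\mc E'$-part of $AX \to \overline{AN} +_{AY} \overline{AM} \leftarrow AZ$. Since $A$ preserves pushouts, $A(N +_Y M) \cong AN +_{AY} AM$, and the universal comparison map $\overline{AN} +_{AY} \overline{AM} \to AN +_{AY} AM$ is built from compositions and pushouts of $\mc M'$-maps, hence lies in $\mc M'$ by the stability hypothesis together with the argument of Lemma \ref{lem.mcoproductsmc} and the remark immediately following it. The transport lemma then identifies the two $\mc E'$-parts.

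Identity preservation is immediate. The specified coherence maps $\overline{\kappa_{X,Y}}$ make sense because $\kappa_{X,Y}$ is an iso, and the strong symmetric monoidal axioms reduce to the corresponding statements for $\hat A$ (which hold since $A$ preserves coproducts) via the same reduce-to-cospans-then-$\mc E'$-part strategy. Preservation of the special commutative Frobenius structure is likewise inherited, since the Frobenius maps on both sides are the images of their cospan-level counterparts under $\square$ and $\square'$ respectively, and $\hat A$ preserves the cospan-level hypergraph structure. The main obstacle is the composition-preservation step: carefully identifying the universal comparison map $\overline{AN} +_{AY} \overline{AM} \to AN +_{AY} AM$ as lying in $\mc M'$ so that the transport lemma applies. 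Once that is in hand the remainder of the proof is essentially bookkeeping, and moreover this same argument establishes Theorem \ref{thm.cospantocorel} by specialising to $\mc C' = \mc C$ with $A$ the identity and the domain factorisation system taken to be $(\mc C, \mc I_{\mc C})$.
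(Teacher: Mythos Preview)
Your approach is correct and essentially the same as the paper's. The paper also proves composition preservation by exhibiting two $(\mc E',\mc M')$-factorisations of $AX+AZ \to AN+_{AY}AM$---precisely your transport lemma applied with $m'_{AN}+_{AY}m'_{AM} \in \mc M'$---then checks naturality of $\overline{\kappa}$ by a parallel diagram, and finally specialises to $(\mc E,\mc M)=(\mc C,\mc I_{\mc C})$ and $A=1_{\mc C}$ to obtain Theorem~\ref{thm.cospantocorel} before completing the general case. Your explicit framing via the factorisation $\square' \circ \hat A = H \circ \square$ and the named transport lemma is a slightly cleaner packaging of the same argument; in particular, once you have established that $\square' \circ \hat A$ descends through $\square$, your separate direct check of composition preservation is in fact redundant, since $\square'\circ\hat A$ and $\square$ are already functors by Proposition~\ref{prop.corelcat} and $\square$ is surjective on morphisms.
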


As discussed, we still have to prove that $\corel(\mc C)$ is a hypergraph
category. We address this first with two lemmas regarding these proposed
functors.

\begin{lemma} \label{lem.corelfuncomposition}
  The above function $\square\maps \corel(\mc C) \to \corel(\mc C')$ preserves
  composition.
\end{lemma}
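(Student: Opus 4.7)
The plan is to identify both $\square$ applied to the composite corelation and the composite in $\corel(\mc C')$ of the $\square$-images with a common third corelation, namely the $\mc E'$-part of the cospan $AX \to A(N+_YM) \leftarrow AZ$ in $\cospan(\mc C')$.

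To set up, fix corelations $X \xrightarrow{i_X} N \xleftarrow{o_Y} Y$ and $Y \xrightarrow{i_Y} M \xleftarrow{o_Z} Z$, let $P = N+_YM$ with pushout inclusions $j_N, j_M$, and factor the copairing $[j_Ni_X, j_Mo_Z]\maps X+Z \to P$ as $X+Z \xrightarrow{e} \overline P \xrightarrow{m} P$ with $e \in \mc E$, $m \in \mc M$; the composite corelation is then $X \to \overline P \leftarrow Z$. For the first identification, apply $A$: the map $Am\maps A\overline P \to AP$ lies in $\mc M'$ by hypothesis, so further $(\mc E',\mc M')$-factoring $Ae$ as $AX+AZ \xrightarrow{e'} \overline{A\overline P} \xrightarrow{m'} A\overline P$ produces an $(\mc E',\mc M')$-factorization $AX+AZ \xrightarrow{e'} \overline{A\overline P} \xrightarrow{Am\circ m'} AP$ of $A[j_Ni_X, j_Mo_Z]$, the copairing of the cospan $AX \to AP \leftarrow AZ$ (using that $A$ preserves coproducts so that $A(X+Z)\cong AX+AZ$). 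Uniqueness of factorizations then yields $\overline{A\overline P} \cong \overline{AP}$, identifying $\square$ of the composite with the $\mc E'$-part of $AX \to AP \leftarrow AZ$.

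For the second identification, since $A$ preserves colimits the cospan composite in $\cospan(\mc C')$ of $AX \to AN \leftarrow AY$ and $AY \to AM \leftarrow AZ$ has apex $AN+_{AY}AM \cong AP$, so this cospan composite is exactly $AX \to AP \leftarrow AZ$. Then by Proposition \ref{prop.corelcat} applied in $\mc C'$---functoriality of $\square\maps \cospan(\mc C') \to \corel(\mc C')$---the composite in $\corel(\mc C')$ of the $\mc E'$-parts of these two cospans equals the $\mc E'$-part of their cospan composite, which is again the $\mc E'$-part of $AX \to AP \leftarrow AZ$. Combining the two identifications gives the desired equality of corelations.

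The main obstacle I expect is bookkeeping rather than a deep idea: the canonical isomorphisms $A(X+Z) \cong AX+AZ$ and $A(N+_YM) \cong AN+_{AY}AM$ induced by colimit-preservation must be threaded consistently through every diagram, and the feet maps of each composite corelation (not merely the apices) must be verified to agree on both sides. These are routine applications of the universal properties of coproducts and pushouts, but they require care to keep the two computations aligned with the single target corelation $\overline{AP}$.
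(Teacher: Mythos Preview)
Your proposal is correct and follows essentially the same approach as the paper: both arguments reduce to showing that $\square(g\circ f)$ and $\square g \circ \square f$ arise as the $\mc E'$-parts of two $(\mc E',\mc M')$-factorisations of the single map $AX+AZ \to A(N+_YM)\cong AN+_{AY}AM$, and then invoke uniqueness of factorisations. The only difference is packaging: you route through the intermediate corelation ``$\mc E'$-part of $AX \to AP \leftarrow AZ$'' and invoke the functoriality of $\cospan(\mc C')\to\corel(\mc C')$ from Proposition~\ref{prop.corelcat} for the second identification, whereas the paper writes out a single two-row diagram and makes the pushout-stability step (that $m'_{AN}+_{AY}m'_{AM}\in\mc M'$) explicit rather than hiding it inside Proposition~\ref{prop.corelcat}.
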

\begin{proof}
  Let $f = (X \longrightarrow N \longleftarrow Y)$ and $g= (Y \longrightarrow M
  \longleftarrow Z)$ be corelations in $\mathcal C$. By definition, the
  corelations $\square(g) \circ \square(f)$ and $\square(g \circ f)$ are given
  by the first arrows in the top and bottom row respectively of the diagram:
  \[ \label{diag.eparts}
    \begin{aligned}
      \xymatrixcolsep{5.5pc}
      \xymatrixrowsep{2pc}
      \xymatrix{
	\scriptstyle AX+AZ \ar[r]^{\mc E'} \ar@{=}[d] & \scriptstyle \overline{\overline{AN}+_{AY}\overline{AM}}
	\ar[r]^{\mc M'} \ar@{<.>}[d]^{n} & \scriptstyle \overline{AN}+_{AY}\overline{AM}
	\ar[r]^{m'_{AN}+_{AY}m'_{AM}} & \scriptstyle
	AN+_{AY}AM \\
	\scriptstyle AX+AZ \ar[r]^{\mc E'} & \scriptstyle \overline{A(\overline{N+_YM})} \ar[r]^{\mc M'} & \scriptstyle
	A(\overline{N+_YM}) \ar[r]^{Am_{N+_YM}} & \scriptstyle A(N+_YM) \ar@{<->}[u]_{\sim}
      }
    \end{aligned}
    \tag{$\ast$}
  \]
  The morphisms labelled $\mc E'$ lie in $\mc E'$, and similarly for $\mc M'$;
  these are given by the factorisation system on $\mc C'$.  The maps
  $Am_{N+_YM}$ and $m'_{AN}+_{AY}m'_{AM}$ lie in $\mc M'$ too: $Am_{N+_YM}$ as
  it is in the image of $\mc M$, and $m'_{AN}+_{AY}m'_{AM}$ as $\mc M'$ is
  stable under pushout. 

  Moreover, the diagram commutes as both maps $AX+AZ \to AN+_{AY}AM$ compose to
  that given by the pushout of the images of $f$ and $g$ over $AY$.  Thus the
  diagram represents two $(\mc E', \mc M')$ factorisations of the same morphism,
  and there exists an isomorphism $n$ between the corelations $\square(g) \circ
  \square(f)$ and $\square(g\circ f)$. This proves that $\square$ preserves
  composition.
\end{proof}

\begin{remark}
  While we have already assumed that $\corel(\mc C)$ is a category, this first
  lemma allows us to verify the associativity and unit laws for $\corel(\mc C)$.
  Consider the case of Proposition \ref{prop.corelfunctors} with $\mc C = \mc
  C'$, $(\mc E,\mc M) = (\mc C, \mc I_{\mc C})$, and $A = 1_{\mc C}$. Then the
  domain of $\square$ is $\cospan(\mc C)$ by definition. (Indeed, $\square$ is
  the functor of Proposition \ref{prop.corelcat} mapping a cospan to its $\mc
  E$-part.) In this case, the function $\square\maps \cospan(\mc C) \to
  \corel(\mc C)$ is bijective-on-objects and surjective-on-morphisms. Thus to
  compute the composite of any two corelations, we may consider them as cospans,
  compute their composite \emph{as cospans}, and then take the $\mc E$-part of
  the result. Since composition of cospans is associative and unital, so is
  composition of corelations, with the identity corelation just the image of the
  identity cospan.
\end{remark}

This first lemma is useful in proving an second important lemma: the
naturality of $\overline{\kappa}$.

\begin{lemma} \label{lem.corelfunmonoidal}
  The maps $\overline{\kappa_{X,Y}}$, as defined in Proposition
  \ref{prop.corelfunctors}, are natural.
\end{lemma}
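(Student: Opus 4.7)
The plan is to reduce the naturality of $\overline{\kappa_{X,Y}}$ in $\corel(\mc C')$ to the naturality of $\kappa_{X,Y}$ in $\mc C'$, then transport the resulting equation across the $\mc E'$-part map from $\cospan(\mc C')$ to $\corel(\mc C')$. This parallels the general strategy outlined at the start of Section~\ref{sec.corels}.

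First, by Lemma~\ref{lem.corelfuncomposition} applied in the special case $\mc C = \mc C'$, $(\mc E,\mc M) = (\mc C',\mc I_{\mc C'})$, and $A = 1_{\mc C'}$, the canonical map $\square\maps \cospan(\mc C') \to \corel(\mc C')$ of Proposition~\ref{prop.corelcat} preserves composition; moreover, it is bijective on objects and surjective on morphisms. Thus it suffices, for any pair of cospan representatives of morphisms $f\maps X \to Y$, $g\maps X' \to Y'$ in $\corel(\mc C)$, to verify naturality of $\overline{\kappa}$ on a chosen lift to $\cospan(\mc C')$.

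Lift $\square f$ and $\square g$ to the image cospans $Af = (AX \to AN \leftarrow AY)$ and $Ag = (AX' \to AM \leftarrow AY')$, and lift each $\overline{\kappa_{Z,Z'}}$ to the cospan $AZ+AZ' \xrightarrow{\kappa_{Z,Z'}} A(Z+Z') \xleftarrow{1} A(Z+Z')$. The composite $\overline{\kappa_{Y,Y'}} \circ (\square f + \square g)$ in $\cospan(\mc C')$ is the pushout cospan with apex $(AN+AM) +_{AY+AY'} A(Y+Y')$, while $\square(f+g) \circ \overline{\kappa_{X,X'}}$ has apex $A(N+M)$, with left leg $A[i_X,i_{X'}] \circ \kappa_{X,X'}$ and right leg $A[o_Y,o_{Y'}]$. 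Because $\kappa_{Y,Y'}$ is an isomorphism, the first apex is canonically isomorphic to $AN+AM$, and $\kappa_{N,M}$ supplies a further isomorphism $AN+AM \to A(N+M)$; naturality of $\kappa$ applied to $[i_X,i_{X'}]\maps X+X' \to N+M$ and $[o_Y,o_{Y'}]\maps Y+Y' \to N+M$ then witnesses that the two cospans are isomorphic as cospans. Hence the two composites agree in $\cospan(\mc C')$, and applying $\square$ yields the desired equality in $\corel(\mc C')$.

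The main obstacle is the routine but fiddly book-keeping with the pushout apex $(AN+AM) +_{AY+AY'} A(Y+Y')$: one must identify it with $AN+AM$ via the inverse of $\kappa_{Y,Y'}$ and trace how this identification interacts with the left and right legs of the composite cospan. Once this identification is made explicit, the naturality of $\kappa$ together with functoriality of $A$ provide the desired commuting squares without further calculation.
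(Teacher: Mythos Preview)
Your overall strategy---lift the naturality square to $\cospan(\mc C')$, verify it there using naturality of $\kappa$, then push down via the composition-preserving map $\square$---is sound and is indeed the paper's strategy. But one step is left unjustified, and it is precisely the step where the paper does its actual work.

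You lift $\square f$ to $Af$ and $\square g$ to $Ag$, and then tacitly treat $Af + Ag$ as a lift of $\square f + \square g$. For this to be valid you need $\square(Af+Ag) = \square(Af)+\square(Ag)$ in $\corel(\mc C')$: that is, the $\mc E'$-part of a coproduct of cospans must equal the coproduct of their $\mc E'$-parts. This is not automatic; it is exactly the assertion that $(\mc E',\mc M')$-factorisation commutes with $+$. It follows because the coproduct of the two individual factorisations $AX+AY \to \overline{AN} \to AN$ and $AX'+AY' \to \overline{AM} \to AM$ is again an $(\mc E',\mc M')$-factorisation, which in turn requires Lemma~\ref{lem.monfact} (coproducts of $\mc E'$-maps lie in $\mc E'$) and Lemma~\ref{lem.mcoproductsmc} (coproducts of $\mc M'$-maps lie in $\mc M'$). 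The paper makes this explicit as the commutative square~(\ref{diag.natural}): the top row is the coproduct of factorisations, the bottom row the factorisation of $A(f+g)$, and the cited lemmas show the top row is itself a factorisation, so uniqueness yields the comparison isomorphism $p$. Once you supply this argument, your proof and the paper's are essentially the same; without it, the lift you rely on is unsubstantiated.

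A minor point: you write ``the composite $\overline{\kappa_{Y,Y'}} \circ (\square f + \square g)$ in $\cospan(\mc C')$'' when you mean the composite of the chosen lifts $\kappa_{Y,Y'} \circ (Af+Ag)$. Keeping the lifted and unlifted morphisms notationally distinct would make the argument easier to follow.
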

\begin{proof}
  Let $f = (X \longrightarrow N \longleftarrow Y)$, $g= (Z \longrightarrow M
  \longleftarrow W)$ be corelations in $\mc C$. We wish to show that
  \[
    \xymatrixcolsep{4pc}
    \xymatrixrowsep{3pc}
    \xymatrix{
      AX+AY \ar[r]^{\square(f)+\square(g)}
      \ar[d]_{\overline{\kappa_{X,Y}}} & 
      AZ+AW \ar[d]^{\overline{\kappa_{Z,W}}} \\
      A(X+Y) \ar[r]^{\square(f+g)} & A(Z+W)
    }
  \]
  commutes in $\corel(\mc C')$. 

  Consider the following commutative diagram in $\mc C'$, with the outside
  square equivalent to the naturality square for the coherence maps of the
  monoidal functor \linebreak $\cospan(\mc C) \to \cospan(\mc C')$:
  \[ \label{diag.natural}
    \begin{aligned}
      \xymatrixcolsep{4pc}
      \xymatrixrowsep{2.5pc}
      \xymatrix{
	(AX+AY)+(AZ+AW) \ar[r]^(.65){\mc E'+\mc E'}
	\ar[d]_{\kappa_{X,Y}+\kappa_{Z,W}} & 
	\overline{AN}+\overline{AM} \ar[r]^{\mc M'+\mc M'} \ar@{.>}[d]^{p} & 
	AN+AM \ar[d]^{\kappa_{N,M}}\\
	A(X+Y)+A(Z+W) \ar[r]^{\mc E'} & \overline{A(N+M)} \ar[r]^{\mc M'} & A(N+M)
      }
    \end{aligned}
    \tag{$\#$}
  \]
  We have factored the top edge as the coproduct of the respective
  factorisations of $f$ and $g$, and the bottom edge simply as the factorisation
  of the coproduct $f+g$. 

  Note that by Lemma \ref{lem.monfact} the coproduct of two maps in $\mc E'$ is
  again in $\mc E'$, while Lemma \ref{lem.mcoproductsmc} implies the same for
  $\mc M'$. Thus the top edge is an $(\mc E',\mc M')$-factorisation, and the
  uniqueness of factorisations gives the isomorphism $n$. 
  Given that the map reducing cospans to corelations is functorial, the
  commutative square
  \[
    \xymatrixcolsep{2.5pc}
    \xymatrixrowsep{2.5pc}
    \xymatrix{
      (AX+AY)+A(Z+W) \ar[r]^{1+\kappa_{Z,W}^{-1}} \ar@{=}[d] & (AX+AY)+(AZ+AW)
      \ar[r]^(.65){\mc E'+\mc E'} & 
      \overline{AN}+\overline{AM} \ar[d]^{n} \\
      (AX+AY)+A(Z+W) \ar[r]^{\kappa_{X,Y}+1} & A(X+Y)+A(Z+W) \ar[r]^(.6){\mc E'} & 
      \overline{A(N+M)}
    }
  \]
  then implies the naturality of the maps $\overline{\kappa}$.
\end{proof}

These lemmas now imply that $\corel(\mc C)$ is a well-defined hypergraph
category.
\begin{proof}[of Theorem \ref{thm.cospantocorel}.]
  To complete the proof, consider again the case of
  Proposition~\ref{prop.corelfunctors} with $\mc C = \mc C'$, $(\mc E,\mc M) =
  (\mc C, \mc I_{\mc C})$, and $A = 1_{\mc C}$. Note that by definition this
  function maps the coherence and hypergraph maps of $\cospan(\mc C)$ onto the
  corresponding maps of $\corel(\mc C)$. Then since $\cospan(\mc C)$ is a
  hypergraph category, and since $\square$ preserves composition and respects
  the monoidal and hypergraph structure, $\corel(\mc C)$ is also a hypergraph
  category. 
  
  For instance, suppose we want to check the functoriality of the monoidal
  product $+$. We then wish to show $(g \circ f) + (k \circ h) = (g + k) \circ
  (f + h)$ for corelations of the appropriate types.  But $\square$ preserves
  composition, and the naturality of $\kappa$, here the identity map, implies
  that for any two cospans the $\mc E$-part of their coproduct is equal to the
  coproduct of their $\mc E$-parts. Thus we may compute these two expressions by
  viewing $f$, $g$, $h$, and $k$ as cospans, evaluating them in the category of
  cospans, and then taking their $\mc E$-parts. Since the equality holds in the
  category of cospans, it holds in the category of corelations.
\end{proof}

\begin{corollary}
  The functor 
  \[
    \square\maps \mathrm{Cospan}(\mathcal C) \longrightarrow
    \mathrm{Corel}(\mathcal C),
  \]
  that takes each object of $\cospan(\mathcal C)$ to itself as an object of
  $\corel(\mathcal C)$ and each cospan to its $\mathcal E$-part is a strict
  hypergraph functor. 
\end{corollary}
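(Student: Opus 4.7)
The plan is to instantiate Proposition~\ref{prop.corelfunctors} with $\mc C' = \mc C$, with the domain factorisation system $(\mc C, \mc I_{\mc C})$ (so that the domain corelation category is literally $\cospan(\mc C)$), with the codomain factorisation system equal to the given $(\mc E,\mc M)$, and with $A = 1_{\mc C}$.

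First I would check the hypotheses of the proposition. The identity functor trivially preserves finite colimits, so the colimit-preservation hypothesis holds. For the factorisation condition, we need the image under $A = 1_{\mc C}$ of $\mc I_{\mc C}$ to lie in $\mc M$; but by definition every factorisation system $\mc M$ contains all isomorphisms, so this is automatic. Both factorisation systems are costable: $(\mc E,\mc M)$ by assumption, and $(\mc C, \mc I_{\mc C})$ because $\mc I_{\mc C}$ is trivially stable under pushout (the pushout of an isomorphism along any morphism is an isomorphism). Hence Proposition~\ref{prop.corelfunctors} gives a hypergraph functor $\square\maps \cospan(\mc C) \to \corel(\mc C)$ whose action on objects is the identity and whose action on a cospan $X \to N \leftarrow Y$ is to take its $\mc E$-part, as described.

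The only remaining task is to verify strictness. The coherence maps of $\square$ are the $\mc E$-parts $\overline{\kappa_{X,Y}}$ of the comparison maps $\kappa_{X,Y}\maps AX + AY \to A(X+Y)$. Since $A$ is the identity, $\kappa_{X,Y}$ is the identity map on $X+Y$, hence already lies in $\mc E$ with $\mc M$-part an identity, so $\overline{\kappa_{X,Y}}$ is itself the identity corelation. Thus the monoidal structure is preserved on the nose. Similarly, the hypergraph maps on $\corel(\mc C)$ were defined in Theorem~\ref{thm.cospantocorel} to be the images of the hypergraph maps on $\cospan(\mc C)$ under precisely this functor, so the hypergraph structure is also strictly preserved. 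The main step is really just recognising the appropriate instance of Proposition~\ref{prop.corelfunctors}; strictness then follows immediately from the triviality of $\kappa$ when $A$ is the identity.
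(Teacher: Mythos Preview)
Your proof is correct and follows exactly the approach the paper intends: the corollary is stated without proof precisely because it is the special case of Proposition~\ref{prop.corelfunctors} with $\mc C' = \mc C$, domain factorisation system $(\mc C,\mc I_{\mc C})$, codomain system $(\mc E,\mc M)$, and $A = 1_{\mc C}$, as already invoked in the proof of Theorem~\ref{thm.cospantocorel}. Your verification of strictness is also right in substance---since $\kappa_{X,Y}$ is the identity cospan on $X+Y$, its $\mc E$-part is by definition the identity corelation---though note that the relevant factorisation is of the copairing $[1,1]\maps (X+Y)+(X+Y)\to X+Y$, not of $\kappa_{X,Y}$ itself as a single arrow; the conclusion is unaffected.
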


Finally, we complete the proof that $\square\maps \corel(\mc C) \to \corel(\mc
C')$ is in general a hypergraph functor.

\begin{proof}[of Proposition \ref{prop.corelfunctors}.] 
  We show $\square$ is a functor, a symmetric monoidal functor, and then finally
  a hypergraph functor.

  \paragraph{Functoriality.} First, recall that $\square$ preserves composition
  (Lemma \ref{lem.corelfuncomposition}). Thus to prove $\square$ is a functor it
  remains to show identities are mapped to identities. The general idea for this
  and for similar axioms is to recall that the special maps are given by reduced
  versions of particular colimits, and that $(\mc E',\mc M')$ reduces maps more
  than $(\mc E,\mc M)$. 

  In this case, recall the identity corelation is given by the $\mc E$-part $X+X
  \to \overline{X}$ of $[1,1]\maps X+X \to X$. Thus the image of the identity on
  $X$ and the identity on $AX$ are given by the top and bottom rows of the
  commuting square
  \[
    \xymatrixcolsep{4pc}
    \xymatrixrowsep{2pc}
    \xymatrix{
      A(X+X) \ar[d]^{\kappa^{-1}}_{\sim} \ar[r]^{\mc E'} &
      \overline{A\overline{X}} \ar[r]^{\mc M'} \ar@{.>}[d]^{n} &
      A\overline{X} \ar[r]^{A\mc M} & AX \ar@{=}[d]\\
      AX+AX \ar[r]^{\mc E'} & \overline{AX} \ar[rr]^{\mc M'} && AX
    }
  \]
  The outside square commutes as we know $A$ maps the identity cospan of $\mc C$
  to the identity cospan of $\mc C'$. The top row is the image under $A$ of the
  identity cospan in $\mc C$, factored first in $\mc C$, and then in $\mc C'$.
  The bottom row is just the factored identity cospan on $AX$ in $\mc C'$. As
  $A$ maps $\mc M$ into $\mc M'$, the map marked $A\mc M$ lies in $\mc M'$. Thus
  both rows are $(\mc E',\mc M')$-factorisations, and so we have the isomorphism
  $n$. Thus $\square$ preserves identities.

  \paragraph{Strong monoidality.} We proved in Lemma \ref{lem.corelfunmonoidal}
  that our proposed coherence maps are natural. The rest of the properties
  follow from the composition preserving map $\cospan(\mc C') \to \corel(\mc
  C')$.  Since the $\kappa$ obey all the required axioms as cospans, they obey
  them as corelations too.

  \paragraph{Hypergraph structure.} The proof of preservation of the hypergraph
  structure follows the same pattern as the identity maps. 
\end{proof}

\begin{remark} \label{rem.corelposet}
  On any category $\mc C$ with finite colimits, reverse inclusions of the right
  factor $\mc M$ defines a partial order on the set of costable factorisation
  systems. That is, we write $(\mc E,\mc M) \ge (\mc E',\mc M')$ whenever $\mc M
  \subseteq \mc M'$.  The trivial factorisation systems $(\mc C,\mc I_{\mc C})$
  and $(\mc I_{\mc C},\mc C)$ are the top and bottom elements of this poset
  respectively.

  Corelation categories realise this poset as a subcategory of the category of
  hypergraph categories. One way to understand this is that corelations are
  cospans with the $\mc M$-part `forgotten'. Using the morphism-isomorphism
  factorisation system nothing is forgotten, so these corelations are just
  cospans. Using the isomorphism-morphism factorisation system everything is
  forgotten, so there is a unique corelation between any two objects.

  We can construct a hypergraph functor between two corelation categories
  precisely when the codomain forgets more than the domain: i.e. if the codomain
  is less than the domain in the poset. In particular, this implies there is
  always a hypergraph functor from the cospan category $\corel_{(\mc C,\mc
  I_{\mc C})}(\mc C)= \cospan(\mc C)$ to any other corelation category
  $\corel_{(\mc E,\mc M)}(\mc C)$, and from $\corel_{(\mc E,\mc M)}(\mc C)$
  any corelation category to the indiscrete category $\corel_{(\mc I_{\mc
  C},\mc C)}(\mc C)$ on the objects of $\mc C$.
\end{remark}

\section{Decorated corelations} \label{sec.dcorc}

In this section we define the category of decorated corelations. 

Recall that decorating cospans requires more than just choosing a set of
decorations for each apex: for composition, we need to describe how these
decorations transfer along the copairing of pushout maps $[j_N,j_M]\maps N+M \to
N+_YM$. Thus to construct a decorated cospan category we need not merely a
function from the objects of $\mc C$ to $\Set$, but a lax symmetric monoidal
functor $(\mc C,+) \to (\Set,\times)$. 

Similarly, decorating $(\mc E,\mc M)$-corelations requires still more
information: we now further need to know how to transfer decorations backwards along the
morphisms $N+_YM \xleftarrow{m} \overline{N+_YM}$. We thus introduce the
symmetric monoidal category $\mc C;\mc M^\opp$ with morphisms isomorphism
classes of cospans of the form $\xrightarrow{f}\xleftarrow{m}$, where $f \in \mc
C$ and $m \in \mc M$.  For constructing categories of decorated $(\mc E,\mc
M)$-corelations, we then require a lax symmetric monoidal functor $F$ from $\mc
C;\mc M^\opp$ to $\Set$. 

To prove that we have indeed defined a hypergraph category of decorated
corelations, we will proceed as we did for corelations, using
structure-preserving functions from a category already known to be hypergraph.
This will hence again be completed in our discussion of functors in the next
section.
\smallskip

\subsection*{Adjoining right adjoints} \label{ssec.rightads}\hfill

\noindent
Suppose we have a cospan $X+Y \to N$ with a decoration on $N$. Reducing this to
a corelation requires us to factor this to $X+Y \stackrel{e}\to \overline{N}
\stackrel{m}\to N$. To define a category of decorated corelations, then, we must
specify how to take decoration on $N$ and `pull it back' along $m$ to a decoration on
$\overline{N}$.

For decorated cospans, it is enough to have a functor $F$ from a category $\mc
C$ with finite colimits; the image $Ff$ of morphisms $f$ in $\mc C$ describes
how to move decorations forward along $f$. We now want to expand $\mc C$ to
include a morphism $m^\opp$ for each $m$ in $\mc M$, so that the image $Fm^\opp$
describes how to move decorations backwards along $m$.  This is allowed by the
stability of $\mc M$ under pushouts.

\begin{proposition}
  Let $\mathcal C$ be a category with finite colimits, and let $\mathcal M$ be a
  subcategory of $\mathcal C$ stable under pushouts. Then we define the category
  $\mathcal C; \mathcal M^\opp$ as follows  
  \smallskip 

  \begin{center}
    \begin{tabular}{| c | p{.65\textwidth} |}
      \hline
      \multicolumn{2}{|c|}{The symmetric monoidal category $(\mc C;\mc M^\opp,+)$} \\
      \hline
      \textbf{objects} & the objects of $\mathcal C$ \\ 
      \textbf{morphisms} & isomorphism classes of cospans of the form
      $\stackrel{c}\rightarrow \stackrel{m}\leftarrow$, where $c$ lies in
      $\mathcal C$ and $m$ in $\mathcal M$\\ 
      \textbf{composition} & given by pushout \\
      \textbf{monoidal product} & the coproduct in $\mathcal C$ \\
      \textbf{coherence maps} & the coherence maps in $\mc C$ \\
      \hline
    \end{tabular}
  \end{center}
  \smallskip 
\end{proposition}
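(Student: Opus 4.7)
The plan is to realise $\mc C; \mc M^\opp$ as a wide sub-symmetric-monoidal category of the hypergraph category $(\cospan(\mc C),+)$. Once this is done, associativity, unitality, functoriality of $+$, naturality of the coherence maps, and the symmetric monoidal coherence axioms are all inherited for free from $\cospan(\mc C)$. Thus the only real task is to verify three closure properties of the specified class of morphisms: it contains identities, it is closed under cospan composition, and it is closed under the coproduct pairing of morphisms.

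First I would check that identities belong: the identity cospan on $X$ is $X \xrightarrow{1_X} X \xleftarrow{1_X} X$, and $1_X \in \mc M$ because $\mc M$, as a subcategory of $\mc C$, contains all identity morphisms. Next, for closure under composition, consider two candidate cospans $X \xrightarrow{c_1} N \xleftarrow{m_1} Y$ and $Y \xrightarrow{c_2} M \xleftarrow{m_2} Z$ with $c_i \in \mc C$ and $m_i \in \mc M$. Their cospan composite in $\cospan(\mc C)$ is $X \xrightarrow{j_N \circ c_1} N+_YM \xleftarrow{j_M \circ m_2} Z$, where $j_M\maps M \to N+_YM$ is the pushout of $m_1$ along $c_2$. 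By the hypothesis that $\mc M$ is stable under pushouts we have $j_M \in \mc M$, and then $j_M \circ m_2 \in \mc M$ by closure of $\mc M$ under composition. Hence the composite is again of the required form. Because the pushout is determined up to unique isomorphism, this operation is moreover well defined on isomorphism classes.

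For the monoidal product, the coproduct pairing of two cospans $X_i \xrightarrow{c_i} N_i \xleftarrow{m_i} Y_i$ yields the cospan $X_1+X_2 \xrightarrow{c_1+c_2} N_1+N_2 \xleftarrow{m_1+m_2} Y_1+Y_2$; its right leg $m_1+m_2$ lies in $\mc M$ by Lemma~\ref{lem.mcoproductsmc}, so the required class is closed under $+$ as well. (Implicit here is that $\mc M$ contains the relevant identities and isomorphisms needed to apply Lemma~\ref{lem.mcoproductsmc}; this is the standing hypothesis of that lemma and holds in all intended applications, in particular when $\mc M$ arises as the right class of a factorisation system.)

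The only place where any content has to be checked is closure of composition, and this is precisely where pushout stability of $\mc M$ is used; everything else is a straight inheritance from the ambient hypergraph category $\cospan(\mc C)$, so no independent verification of the symmetric monoidal axioms is required.
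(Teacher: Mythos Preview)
Your proposal is correct and follows essentially the same approach as the paper: realise $\mc C;\mc M^\opp$ as a symmetric monoidal subcategory of $\cospan(\mc C)$, check closure under composition via pushout stability of $\mc M$, check closure under $+$ via Lemma~\ref{lem.mcoproductsmc}, and inherit all coherence axioms. The paper's proof is just a terser version of what you wrote; your parenthetical about the isomorphism hypothesis of Lemma~\ref{lem.mcoproductsmc} is a fair observation, though note that the part of that lemma actually needed here---that $m+m'\in\mc M$---follows from pushout stability and closure under composition alone.
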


\begin{proof}
  Our data is well defined: composition because $\mc M$ is stable under
  pushouts, and monoidal composition by Lemma \ref{lem.mcoproductsmc}. The
  coherence laws follow as this is a symmetric monoidal subcategory of
  $\cospan(\mc C)$. 
\end{proof}

\begin{remark}
As we state in the proof, the category $\mc C;\mc M^\opp$ is a subcategory of
$\cospan(\mc C)$. We can in fact view it as a sub-bicategory of the bicategory
of cospans in $\mc C$, where the 2-morphisms given by maps of cospans. In this
bicategory every morphism of $\mc M$ has a right adjoint.
\end{remark}

\begin{examples} 
  Note that $\mathcal C; \mathcal C^\opp$ is by definition equal to
  $\cospan(\mathcal C)$, $\mathcal C;\mathcal I_{\mathcal C}^\opp$ is isomorphic
  to $\mathcal C$, and $\Set;\mathrm{Inj}^\opp$ is the category with sets as
  objects and partial functions as morphisms.
\end{examples}

The following lemma details how to construct functors between this type of
category.

\begin{lemma} \label{lem.madjointsfunctor}
  Let $\mathcal C$, $\mathcal C'$ be categories with finite colimits, and let
  $\mathcal M$, $\mathcal M'$ be subcategories of $\mc C$, $\mc C'$ respectively
  each stable under pushouts. Let $A\maps \mathcal C \to \mc C'$ be functor that
  preserves colimits and such that the image of $\mc M$ lies in $\mc M'$. Then
  $A$ extends to a symmetric strong monoidal functor
  \[
    A\maps \mc C;\mc M^\opp \longrightarrow \mc C'; \mc M'^\opp.
  \]
  mapping $X$ to $AX$ and $\stackrel{c}\rightarrow \stackrel{m}\leftarrow$ to
  $\xrightarrow{Ac} \xleftarrow{Am}$.
\end{lemma}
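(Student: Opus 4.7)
The plan is to define $A$ on $\mc C;\mc M^\opp$ by $X \mapsto AX$ on objects and by sending a cospan $X \xrightarrow{c} N \xleftarrow{m} Y$ to $AX \xrightarrow{Ac} AN \xleftarrow{Am} AY$; since $A$ preserves isomorphisms, this is well-defined on isomorphism classes, and the right leg $Am$ lies in $\mc M'$ by the hypothesis $A\mc M \subseteq \mc M'$, so the image is a genuine morphism of $\mc C';\mc M'^\opp$. For functoriality, identity cospans are built from identity arrows, so they are preserved automatically; composition of cospans is computed via pushout, and since $A$ preserves finite colimits it preserves this pushout (up to the canonical isomorphism), giving $A(g \circ f) = Ag \circ Af$ as isomorphism classes in $\mc C';\mc M'^\opp$.

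For the strong monoidal structure, I would take the coherence isomorphisms $\kappa_{X,Y}\maps AX + AY \to A(X+Y)$ and $\kappa_0\maps 0_{\mc C'} \to A 0_{\mc C}$ to be the canonical isomorphisms arising from $A$'s preservation of finite coproducts and initial objects; regarded as cospans with identity right legs, these live in $\mc C';\mc M'^\opp$ because $\mc M'$ contains all isomorphisms. The main step is naturality: given morphisms $f = (X \to N \leftarrow Y)$ and $g = (Z \to M \leftarrow W)$ of $\mc C;\mc M^\opp$, their monoidal product $f+g$ is the cospan $X+Z \to N+M \leftarrow Y+W$ whose right leg lies in $\mc M$ by Lemma \ref{lem.mcoproductsmc}. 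Applying $A$ and using preservation of coproducts, one checks that both $\kappa_{Y,W} \circ (Af + Ag)$ and $A(f+g) \circ \kappa_{X,Z}$ present the same isomorphism class of cospan from $AX+AZ$ to $AY+AW$, with apex canonically identified with $A(N+M)$ via $\kappa_{N,M}$, so the naturality square commutes in $\mc C';\mc M'^\opp$.

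Finally, the pentagon, triangle, and symmetry axioms follow because the coherence maps are inherited from the coproduct structure in $\mc C'$, where these axioms hold, and the inclusion $\mc C' \hookrightarrow \mc C';\mc M'^\opp$ is a faithful symmetric strict monoidal functor. The principal obstacle is really just the interplay of the two hypotheses: preservation of finite colimits is needed to supply the $\kappa$'s and to preserve composition-by-pushout, while the condition $A\mc M \subseteq \mc M'$ is needed both to see that $Am$ lies in $\mc M'$ on individual morphisms and, in combination with the first hypothesis together with Lemma \ref{lem.mcoproductsmc}, to ensure the cospans produced by coproducts and pushouts of $\mc M$-arrows have their right legs in $\mc M'$. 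Once these well-definedness checks are in hand, the remaining verifications are routine transcriptions of identities that already hold in $\mc C'$.
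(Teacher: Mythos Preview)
Your proposal is correct, but the paper dispatches the lemma much more quickly by a single observation you did not make explicit: since $\mc C;\mc M^\opp$ is by construction a symmetric monoidal subcategory of $\cospan(\mc C)$ (and likewise for $\mc C';\mc M'^\opp$ in $\cospan(\mc C')$), and since a colimit-preserving $A$ already induces a symmetric strong monoidal functor $\cospan(\mc C) \to \cospan(\mc C')$, the desired functor is simply the restriction and corestriction of that known functor. The only thing left to check is that the restriction lands in the right codomain, which is exactly the hypothesis $A(\mc M) \subseteq \mc M'$. All of the verifications you carry out---preservation of composition via pushouts, naturality of $\kappa$, the coherence axioms---are then inherited for free from the ambient cospan functor, rather than needing to be re-established. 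Your direct approach is not wrong, and has the virtue of being self-contained, but it effectively reproves facts about $\cospan(\mc C) \to \cospan(\mc C')$ that the paper takes as background.
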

\begin{proof}
  Note $A(\mc M) \subseteq \mc M'$, so $\xrightarrow{Ac} \xleftarrow{Am}$ is
  indeed a morphism in $\mc C';\mc M'^\opp$. This is then a restriction and
  corestriction of the usual functor $\cospan(\mc C) \to \cospan(\mc C')$ to the
  above domain and codomain.
\end{proof}

Note that a similar construction giving subcategories of cospan categories could
be defined more generally using any two isomorphism-containing wide
subcategories stable under pushout.  The above, however, suffices for decorated
corelations.
\smallskip

\subsection*{Decorated corelations} \label{ssec.deccorel}\hfill

\noindent
As we have said, decorated corelations are constructed from a lax symmetric
monoidal functor from $\mc C;\mc M^\opp$ to $\Set$. We now define
decorated corelations and give a composition rule for them, showing that this
composition rule is well defined up to isomorphism.

\begin{definition}
  Let $\mathcal C$ be a category with finite colimits, $(\mathcal E, \mathcal
  M)$ be a costable factorisation system, and 
  \[
    F: (\mathcal C;\mathcal M^\opp,+) \longrightarrow (\Set, \times)
  \]
  be a lax symmetric monoidal functor.  We define an $F$-\define{decorated
  corelation} to a pair
  \[
    \left(
    \begin{aligned}
      \xymatrix{
	& N \\  
	X \ar[ur]^{i} && Y \ar[ul]_{o}
      }
    \end{aligned}
    ,
    \qquad
    \begin{aligned}
      \xymatrix{
	FN \\
	1 \ar[u]_{s}
      }
    \end{aligned}
    \right)
  \]
  where the cospan is jointly $\mathcal E$-like. A morphism of decorated
  corelations is a morphism of decorated cospans between two decorated
  corelations. 
\end{definition}

Suppose we have decorated corelations
\[
  \left(
  \begin{aligned}
    \xymatrix{
      & N \\  
      X \ar[ur]^{i_X} && Y \ar[ul]_{o_Y}
    }
  \end{aligned}
  ,
  \qquad
  \begin{aligned}
    \xymatrix{
      FN \\
      1 \ar[u]_{s}
    }
  \end{aligned}
  \right)
  \qquad
  \mbox{and}
  \qquad
  \left(
  \begin{aligned}
    \xymatrix{
      & M \\  
      Y \ar[ur]^{i_Y} && Z \ar[ul]_{o_Z}
    }
  \end{aligned}
  ,
  \qquad
  \begin{aligned}
    \xymatrix{
      FM \\
      1 \ar[u]_{t}
    }
  \end{aligned}
  \right).
\]
Then, recalling the notation introduced in \textsection\ref{sec.background},
their composite is given by the composite corelation
\[
  \xymatrix{
    & \overline{N+_YM} \\  
    X \ar[ur]^{e\circ \iota_X} && Z \ar[ul]_{e \circ \iota_Z}
  }
\]
paired with the decoration
\[
  1 \xrightarrow{\varphi_{N,M}\circ\langle s,t\rangle} F(N+M)
  \xrightarrow{F[j_N,j_M]} F(N+_YM) \xrightarrow{F(m^\opp)} F(\overline{N+_YM}).
\]
As composition of corelations and decorated cospans are both well defined up to
isomorphism, it is straightforward to show that this too is well defined up to isomorphism. 

\begin{proposition} \label{prop.deccorelcomp}
  The above is a well-defined composition rule on isomorphism classes of
  decorated corelations.
\end{proposition}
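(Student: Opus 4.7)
The plan is to show that if we swap either of the input decorated corelations for an isomorphic representative, the resulting composite is isomorphic as a decorated corelation. Proposition~\ref{prop.corelcat} already handles this for the underlying composite corelation: the canonical induced isomorphism $\overline h\maps \overline{N+_YM} \to \overline{N'+_YM'}$ of apices is in hand, so the remaining question is whether $F\overline h$ transports one candidate decoration on $\overline{N+_YM}$ onto the other on $\overline{N'+_YM'}$.

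Concretely I would start with isomorphisms of decorated corelations
$\alpha\maps(X \to N \leftarrow Y,\, s) \to (X \to N' \leftarrow Y,\, s')$
and
$\beta\maps(Y \to M \leftarrow Z,\, t) \to (Y \to M' \leftarrow Z,\, t')$,
so that $F\alpha \circ s = s'$ and $F\beta \circ t = t'$. The universal property of pushout supplies $h\maps N+_YM \to N'+_YM'$ with $h\circ j_N = j_{N'}\circ\alpha$ and $h\circ j_M = j_{M'}\circ\beta$, and hence $h \circ [j_N,j_M] = [j_{N'},j_{M'}] \circ (\alpha+\beta)$. The $(\mc E,\mc M)$-factorisation system then descends $h$ to the isomorphism $\overline h$ of $\mc E$-parts, satisfying $m' \circ \overline h = h \circ m$, where $m$, $m'$ denote the $\mc M$-parts of the factorisations of the composite cospans.

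The decoration compatibility then reduces to the identity
\[
F\overline h \circ F(m^\opp) \circ F[j_N,j_M] \circ \varphi_{N,M} \circ \langle s,t\rangle = F(m'^\opp) \circ F[j_{N'},j_{M'}] \circ \varphi_{N',M'} \circ \langle s',t'\rangle,
\]
which I plan to establish by chaining four ingredients: (i) $(F\alpha \times F\beta)\circ\langle s,t\rangle = \langle s',t'\rangle$, from the hypothesis on $\alpha,\beta$; (ii) naturality of $\varphi$ at $\alpha+\beta$; (iii) functoriality of $F$ applied to $h \circ [j_N,j_M] = [j_{N'},j_{M'}] \circ (\alpha+\beta)$; and (iv) the identity $\overline h \circ m^\opp = m'^\opp \circ h$ in $\mc C;\mc M^\opp$.

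The hard part will be (iv), which requires unpacking composition in $\mc C;\mc M^\opp$ as pushout of cospans. On the left, $\overline h \circ m^\opp$ is the pushout of $N+_YM \xleftarrow{m} \overline{N+_YM} \xrightarrow{\overline h} \overline{N'+_YM'}$, which since $\overline h$ is invertible simplifies to the cospan $N+_YM \xrightarrow{1} N+_YM \xleftarrow{m\circ\overline h^{-1}} \overline{N'+_YM'}$. On the right, $m'^\opp \circ h$ is literally $N+_YM \xrightarrow{h} N'+_YM' \xleftarrow{m'} \overline{N'+_YM'}$. The map $h$ itself is then an isomorphism of cospans between these two, and the nontrivial compatibility $h \circ m = m' \circ \overline h$ is precisely the defining relation of $\overline h$. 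Once (iv) is in place, functoriality of $F$ chains (i)--(iv) into the required equation, and $\overline h$ emerges as the desired isomorphism of composite decorated corelations.
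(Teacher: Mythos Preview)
Your proposal is correct and follows essentially the same route as the paper. The paper's proof is just a more compressed version of yours: it packages your steps (i)--(iii) into the single observation that the triangle commutes because composition of \emph{decorated cospans} is already known to be well defined (so the decorations on $F(N+_YM)$ and $F(N'+_YM')$ agree via $Fh$), and it packages your step (iv) into the observation that the remaining square commutes because composition of \emph{corelations} is well defined (which amounts exactly to the factorisation-system identity $h \circ m = m' \circ \overline h$ you unpack). Your explicit verification of (iv) in $\mc C;\mc M^\opp$ is correct and is precisely what underlies the paper's appeal to that fact.
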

\begin{proof}
Let 
\[
  \big(X \stackrel{i_X}\longrightarrow N \stackrel{o_Y}\longleftarrow Y,\enspace 1
\stackrel{s}\longrightarrow FN\big) \stackrel\sim\longrightarrow \big(X \stackrel{i'_X}\longrightarrow N'
\stackrel{o'_Y}\longleftarrow Y,\enspace 1 \stackrel{s'}\longrightarrow FN'\big)
\]
and
\[
  \big(Y \stackrel{i_Y}\longrightarrow M \stackrel{o_Z}\longleftarrow Z,\enspace 1
\stackrel{t}\longrightarrow FM\big) \stackrel\sim\longrightarrow \big(Y \stackrel{i'_Y}\longrightarrow M'
\stackrel{o'_Z}\longleftarrow Z,\enspace 1 \stackrel{t'}\longrightarrow FM'\big)
\]
be isomorphisms of decorated corelations. We wish to show that the composite of
the decorated corelations on the left is isomorphic to the composite of the
decorated corelations on the right. 

By definition, the composites of the underlying corelations are isomorphic, via
an isomorphism $s$ which exists by the factorisation system. We need to show
this $s$ is an isomorphism of decorations.  This is a matter of showing the
diagram 
\[
    \xymatrixcolsep{3pc}
    \xymatrixrowsep{1pc}
 \xymatrix{
   && F(N+_YM) \ar[r]^{Fm^\opp} \ar[dd]_\sim^{Fp} & F(\overline{N+_YM})
   \ar[dd]_\sim^{Fs} \\
   1 \ar[urr]^(.4){F[j_N,j_M]\circ\varphi_{N,M}\circ \langle s,t \rangle\qquad}
   \ar[drr]_(.4){F[j_{N'},j_{M'}]\circ\varphi_{N',M'}\circ \langle s',t'
 \rangle\qquad\quad} \\
   && F(N'+_YM') \ar[r]_{Fm'^\opp} & F(\overline{N'+_YM'})\\
 }
\]
The triangle commutes as composition of decorated cospans is well defined, while
the square commutes as composition of corelations is well defined.
\end{proof}

\begin{remark}
  We could give a more general definition of decorated corelation for lax
  braided monoidal functors 
  \[
    (\mathcal C;\mathcal M^\opp,+) \longrightarrow (\mc D, \otimes).
  \]
  A similar argument to that in Appendix \ref{ssec.setdecorations} shows,
  however, that we gain no extra generality. On the other hand, keeping track of
  this possibly varying category $\mc D$ in the following distracts from the
  main insights. We thus merely remark that it is possible to make the more
  general definition, and leave it at that.
\end{remark}
\smallskip

\subsection*{Categories of decorated corelations} \label{ssec.deccorelcats}\hfill

\noindent
We now define the hypergraph category $F\mathrm{Corel}$ of decorated
corelations. Having defined decorated corelations and their composition in the
previous subsection, the key question to address is the provenance of the
monoidal and hypergraph structure. 

Recall, from \textsection\ref{sec.corels}, that to define the monoidal and
hypergraph structure on categories of corelations, we used functors
$\mathrm{Cospan}(\mc C) \to \mathrm{Corel}(\mc C)$, leveraging the monoidal and
hypergraph structure on cospan categories. In analogy, here we leverage the same
fact for decorated cospans, this time using a structure preserving map 
\[
  \square\maps F\mathrm{Cospan} \longrightarrow F\mathrm{Corel}.
\]
Here $F\mathrm{Cospan}$ denotes the decorated cospan category constructed from
the restriction of the functor $F\maps \mc C;\mc M^\opp \to \Set$ to the domain
$\mc C$. 

The monoidal product of two decorated corelations is their monoidal product as
decorated cospans. To define the coherence maps for this monoidal product, as
well as the coherence maps, we introduce the notion of a restricted decoration.

Given a cospan $X \to N \leftarrow Y$, write $m\maps \overline{N} \to N$ for the
$\mc M$ factor of the copairing $X+Y \to N$. The map $\square$ takes a
decorated cospan 
\[
  (X \stackrel{i}\longrightarrow N \stackrel{o}\longleftarrow Y, \enspace 1
    \stackrel{s}\longrightarrow FN)
\]
to the decorated corelation 
\[
  (X \stackrel{\overline{i}}\longrightarrow \overline N
  \stackrel{\overline{o}}{\longleftarrow} Y, \enspace 1 \xrightarrow{Fm^\opp \circ
  s} F\overline{N}),
\]
where the corelation is given by the jointly $\mc E$-part of the cospan, and the
decoration is given by composing $s$ with the $F$-image $Fm^\opp\maps FN \to
F\overline{N}$ of the map $N \stackrel{1_N}\to N \stackrel{m}\leftarrow
\overline{N}$ in $\mc C;\mc M^\opp$. This is well-defined up to isomorphism of
decorated corelations. We call $Fm^\opp \circ s$ the \define{restricted
decoration} of the decoration on the cospan $(X \to N \leftarrow Y, \enspace 1
\stackrel{s}\to FN)$.

We then make the following definition.

\begin{theorem} \label{thm.fcorel}
  Let $\mathcal C$ be a category with finite colimits and a costable
  factorisation system $(\mathcal E, \mathcal M)$, and let 
  \[
    F: (\mathcal C;\mathcal M^\opp,+) \longrightarrow (\Set, \times)
  \]
  be a lax symmetric monoidal functor.  Then we may define 
  \smallskip 

  \begin{center}
    \begin{tabular}{| c | p{.65\textwidth} |}
      \hline
      \multicolumn{2}{|c|}{The hypergraph category $(F\mathrm{Corel},+)$} \\
      \hline
      \textbf{objects} & the objects of $\mathcal C$ \\ 
      \textbf{morphisms} & isomorphism classes of $F$-decorated corelations in
      $\mathcal C$\\ 
      \textbf{composition} & given by $\mc E$-part of pushout with restricted
      decoration  \\
      \textbf{monoidal product} & the coproduct in $\mathcal C$  \\
      \textbf{coherence maps} & maps from $\cospan(\mc C)$ with restricted empty
      decoration \\
      \textbf{hypergraph maps} & maps from $\cospan(\mc C)$ with restricted empty
      decoration \\
      \hline
    \end{tabular}
  \end{center}
  \smallskip 
\end{theorem}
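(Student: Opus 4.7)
The plan is to mimic the strategy used in Section~\ref{sec.corelfunctors} for corelations: construct a surjective-on-morphisms, structure-preserving function
\[
  \square\maps F\mathrm{Cospan} \longrightarrow F\mathrm{Corel}
\]
from the known hypergraph category $F\mathrm{Cospan}$ (Proposition~\ref{thm:fcospans}) onto the proposed $F\mathrm{Corel}$, and transfer all the needed equations across. Concretely, $\square$ sends a decorated cospan $(X\stackrel{i}\to N\stackrel{o}\leftarrow Y,\ 1\stackrel{s}\to FN)$ to its $\mc E$-part $X\to\overline N\leftarrow Y$ decorated by the restricted decoration $Fm^\opp\circ s\maps 1\to F\overline N$, where $m\maps\overline N\to N$ is the $\mc M$-factor of $[i,o]$. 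Well-definedness on isomorphism classes follows from uniqueness of factorisations in $(\mc E,\mc M)$ and functoriality of $F$; in particular, $\square$ is clearly surjective on objects and morphisms. By construction, the coherence and hypergraph maps of $F\mathrm{Corel}$ are declared to be the $\square$-images of those of $F\mathrm{Cospan}$.

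The verification splits into three routine items and one substantive one. Preservation of identities and of the monoidal product $+$ follow essentially as in the proofs of Lemmas~\ref{lem.corelfuncomposition} and \ref{lem.corelfunmonoidal}: in each case one sets up two $(\mc E,\mc M)$-factorisations of the same underlying map, obtains a unique comparison isomorphism $n$ between apices, and then uses functoriality of $F$ applied to $n$ (viewed as an isomorphism in $\mc C;\mc M^\opp$) to see that the two candidate decorations agree. Once $\square$ is a structure-preserving surjection from a hypergraph category, the symmetric monoidal axioms, Frobenius axioms, and hypergraph coherence for $F\mathrm{Corel}$ follow by evaluating both sides of any required equation in $F\mathrm{Cospan}$ and then applying $\square$, exactly as in the concluding step of Theorem~\ref{thm.cospantocorel}.

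The main obstacle is showing that $\square$ preserves composition, which is where the functoriality of $F$ on $\mc C;\mc M^\opp$ (and not merely on $\mc C$) is essential. Given composable decorated cospans $(X\to N\leftarrow Y,s)$ and $(Y\to M\leftarrow Z,t)$, one has to compare:
\begin{enumerate}[(a)]
\item $\square$ of their cospan-composite, which has apex $\overline{N+_YM}$ and decoration
\[
  F(m_{N+_YM})^\opp\circ F[j_N,j_M]\circ\varphi_{N,M}\circ\langle s,t\rangle;
\]
\item the $F\mathrm{Corel}$-composite of $\square(f)=(X\to\overline N\leftarrow Y,Fm_N^\opp\circ s)$ and $\square(g)=(Y\to\overline M\leftarrow Z,Fm_M^\opp\circ t)$, which has apex $\overline{\overline N+_Y\overline M}$ and decoration
\[
  F(m_{\overline N+_Y\overline M})^\opp\circ F[j_{\overline N},j_{\overline M}]\circ\varphi_{\overline N,\overline M}\circ\langle Fm_N^\opp\circ s,Fm_M^\opp\circ t\rangle.
\]
\end{enumerate}
The unique comparison isomorphism of apices, call it $n$, is supplied as in diagram~(\ref{diag.eparts}) in the proof of Lemma~\ref{lem.corelfuncomposition}, using that $\mc M$ is stable under pushout (so $m_N+_Ym_M$ lies in $\mc M'=\mc M$) and that factorisation commutes with pushouts. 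The heart of the proof is then to build, inside $\mc C;\mc M^\opp$, a pushout square witnessing that $[j_{\overline N},j_{\overline M}]\circ(m_N+m_M)^\opp$ equals $(m_N+_Ym_M)^\opp\circ[j_N,j_M]$ after post-composing with $m_{N+_YM}^\opp$. Applying $F$ and using lax naturality of $\varphi$ then transports the decoration of~(a) onto that of~(b) along $Fn$.

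Once $\square$ is shown to be a composition- and $+$-preserving surjection mapping the coherence and hypergraph maps of $F\mathrm{Cospan}$ to those stipulated for $F\mathrm{Corel}$, every axiom required of a hypergraph category transfers: given any such equation, evaluate both sides in $F\mathrm{Cospan}$, where it holds by Proposition~\ref{thm:fcospans}, and apply $\square$. This simultaneously establishes that $F\mathrm{Corel}$ is a category, that $+$ is a symmetric monoidal product on it, and that the designated Frobenius maps make each object into a special commutative Frobenius monoid coherent with $+$, completing the proof.
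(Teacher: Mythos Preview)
Your proposal is correct and follows essentially the same approach as the paper: you construct the surjective, composition- and monoidal-product-preserving map $\square\maps F\mathrm{Cospan}\to F\mathrm{Corel}$ and transfer the hypergraph axioms across it, with the key step being the verification that the two candidate decorations on $\overline{N+_YM}$ and $\overline{\overline N+_Y\overline M}$ agree via the pushout square in $\mc C;\mc M^\opp$ relating $[j_{\overline N},j_{\overline M}]\circ(m_N+m_M)^\opp$ and $(m_N+_Ym_M)^\opp\circ[j_N,j_M]$. The only difference is one of packaging: the paper proves this simultaneously with Proposition~\ref{prop.deccorelfunctors} by establishing the general functor $\square\maps F\mathrm{Corel}\to G\mathrm{Corel}$ and then specialising to $A=1_{\mc C}$, $(\mc E,\mc M)=(\mc C,\mc I_{\mc C})$, $\theta=\mathrm{id}$, whereas you argue the special case directly; your route is shorter for Theorem~\ref{thm.fcorel} alone but does not yield the functor result for free.
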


Similar to Theorem~\ref{thm.cospantocorel} defining the hypergraph category
$\corel(\mc C)$, we have now specified well-defined data and just need to check a
collection of coherence axioms. As before, we prove this in the next section,
alongside a theorem regarding functors between decorated corelation categories.

\begin{remark}
  Decorated corelations generalise both decorated cospans, and corelations.
  Decorated cospans are simply decorated corelations with respect to the trivial
  factorisation system $(\mc C, \mc I_{\mc C})$.  `Undecorated' corelations are
  corelations decorated by the constant symmetric monoidal functor
  $\{\ast\}\maps \mc C;\mc M^\opp \to \Set$ on some terminal object $\{\ast\}$
  of $\Set$.
\end{remark}

\begin{remark}
  Note that decorated corelations are strictly more general than decorated
  cospans. For example, the category of epi-mono corelations in $\Set$ is not a
  decorated cospan category. 
  
  To see this, we count so-named scalars: morphisms from the monoidal unit
  $\varnothing$ to itself. In a decorated cospan category, the set of morphisms
  from $X$ to $Y$ always comprises all decorated cospans $(X \to N \leftarrow
  Y,\enspace 1 \to FN)$. Now for any object $N$ in the underlying category $\mc
  C$, there is a unique morphism $\varnothing \to N$. This means that the
  morphisms $\varnothing \to \varnothing$ are indexed by (isomorphism classes
  of) elements of $FN$, ranging over $N$.

  Suppose we have a decorated cospan category with a unique morphism
  $\varnothing \to \varnothing$. By the previous paragraph, and replacing $\mc
  C$ with an equivalent skeletal category, this implies there is only one object
  $N$ such that $FN$ is nonempty. But $FN$ must always contain at least one
  element, the empty decoration $1 \xrightarrow{\varphi_I} F\varnothing
  \xrightarrow{F!} FN$.  This implies there is only one object $N$ in $\mc C$:
  the object $\varnothing$.  Thus $\mc C$ must be the one object discrete
  category, and $F\maps \mc C \to \Set$ is the functor that sends the object of
  $\mc C$ to the one element set $1$.

  Hence any decorated cospan category with a unique morphism $\varnothing \to
  \varnothing$ is the one object discrete category. But the category of epi-mono
  corelations in $\Set$ is a nontrivial category with a unique morphism
  $\varnothing \to \varnothing$.  Thus it cannot be constructed as a decorated
  cospan category.

  On the other hand, as far as hypergraph categories are concerned, we need not
  get more general than decorated corelations: every hypergraph category is
  equivalent to a decorated corelation category \cite{Fon16}.
\end{remark}


\section{Functors between decorated corelation categories} \label{sec.dcorf}

In this section we show how to construct hypergraph functors between decorated
corelation categories. The construction of these functors holds no surprises:
their requirements combine the requirements of corelations and decorated
cospans. In the process of proving that our construction gives well-defined
hypergraph functors, we also complete the necessary prerequisite proof that
decorated corelation categories are well-defined hypergraph categories.

Recall that Lemma \ref{lem.madjointsfunctor} says that, when the image of $\mc
M$ lies in $\mc M'$, we can extend a colimit-preserving functor $\mc C \to \mc
C'$ to a symmetric monoidal functor $\mc C;\mc M^\opp \to \mc C';\mc M'^\opp$ .

\begin{proposition}\label{prop.deccorelfunctors}
  Let $\mathcal C$, $\mathcal C'$ have finite colimits and respective costable
  factorisation systems $(\mathcal E, \mathcal M)$, $(\mathcal E', \mathcal
  M')$, and suppose that we have lax symmetric monoidal functors
\[
  F: (\mathcal C;\mathcal M^\opp,+) \longrightarrow (\Set, \times)
\]
and
\[
  G: (\mathcal C';\mathcal M'^\opp,+) \longrightarrow (\Set, \times).
\]

Further let $A\maps \mathcal C \to \mathcal C'$ be a functor that preserves
finite colimits and such that the image of $\mathcal M$ lies in $\mathcal M'$.
This functor $A$ extends to a symmetric monoidal functor $\mc C;\mc M^\opp \to
\mc C';\mc M'^\opp$.

Suppose we have a monoidal natural transformation $\theta$:
\[
  \xymatrixrowsep{2ex}
  \xymatrix{
    \mc C; \mc M^\opp \ar[dd]_{A} \ar[drr]^F  \\
    &\twocell \omit{_\:\theta}& \Set \\
    \mc C'; \mc {M'}^\opp \ar[urr]_{G} 
  }
\]

Then we may define a hypergraph functor $T\maps F\mathrm{Corel} \to
G\mathrm{Corel}$ sending each object $X \in F\mathrm{Corel}$ to $AX \in
G\mathrm{Corel}$ and each decorated corelation 
  \[
    \left(
    \begin{aligned}
      \xymatrix{
	& N \\  
	X \ar[ur]^{i} && Y \ar[ul]_{o}
      }
    \end{aligned}
    ,
    \qquad
    \begin{aligned}
      \xymatrix{
	FN \\
	1 \ar[u]_{s}
      }
    \end{aligned}
    \right)
  \]  
to
  \[
    \left(
    \begin{aligned}
      \xymatrix{
	& \overline{AN} \\  
	AX \ar[ur]^{e'\circ\iota_{AX}} && AY \ar[ul]_{e'\circ\iota_{AY}}
      }
    \end{aligned}
    ,
    \qquad
    \begin{aligned}
      \xymatrixrowsep{1.5ex}
      \xymatrix{
	G\overline{AN} \\
	GAN \ar[u]_{Gm_{AN}^\opp}\\
        FN \ar[u]_{\theta_N}\\
	1 \ar[u]_{s}
      }
    \end{aligned}
    \right)
  \]  
  The coherence maps $\overline{\kappa_{X,Y}}$ are given by the coherence maps of
  $A$ with the restricted empty decoration.
\end{proposition}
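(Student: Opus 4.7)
The plan is to adapt the two-stage strategy from Section \ref{sec.corelfunctors}: the underlying-corelation part of $T$ is precisely the functor supplied by Proposition \ref{prop.corelfunctors} applied to $A$, so all the genuinely new work concerns the decorations. The approach also proves Theorem \ref{thm.fcorel} simultaneously: choosing $\mc C' = \mc C$, the trivial system $(\mc C,\mc I_{\mc C})$ on the domain, the given system $(\mc E,\mc M)$ on the codomain, $A = 1_{\mc C}$, and $\theta$ the identity recovers the structure-preserving surjection $\square\maps F\mathrm{Cospan} \to F\mathrm{Corel}$. Once Proposition \ref{prop.deccorelfunctors} is established, $F\mathrm{Corel}$ inherits its hypergraph structure from $F\mathrm{Cospan}$ via $\square$, just as $\corel(\mc C)$ inherited its structure from $\cospan(\mc C)$ in the proof of Theorem \ref{thm.cospantocorel}.

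First I would check that $T$ is well defined on isomorphism classes of decorated corelations: given an isomorphism $n\maps N \to N'$ of representatives, the map $\overline{An}$ obtained from the factorisation system witnesses the isomorphism of the underlying cospans, and naturality of $\theta$ combined with the functoriality of $G$ on $\overline{An}^\opp$ shows the decorations agree.

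Next, the crux of the argument is that $T$ preserves composition. The underlying corelations match up to the isomorphism supplied by Lemma \ref{lem.corelfuncomposition}. For the decorations, I would carry out a diagram chase combining three ingredients: (i) monoidality of $\theta$, which exchanges the coherence maps $\varphi^F_{N,M}$ and $\varphi^G_{AN,AM}$; (ii) naturality of $\theta$ on the pushout copairing $[j_N,j_M]$ and on the $\mc M$-arrow $m_{N+_Y M}^\opp$ used to restrict the decoration; and (iii) the fact that $A$ preserves colimits, so that $A(N +_Y M) \cong AN +_{AY} AM$ is the relevant pushout in $\mc C'$ and commutes with the factorisation. Each individual square commutes for a standard reason; the main obstacle is purely the bookkeeping needed to assemble them into a diagram identifying the two decorations on the common apex.

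Naturality of the coherence maps $\overline{\kappa_{X,Y}}$ then follows by combining Lemma \ref{lem.corelfunmonoidal} for the underlying corelations with one more application of monoidality of $\theta$ on the decorations. With composition preservation and natural coherence maps in hand, the remaining axioms—preservation of identities, symmetric monoidal coherence, and preservation of the hypergraph structure—reduce to the corresponding facts for $F\mathrm{Cospan}$ by the same argument used in the proof of Theorem \ref{thm.cospantocorel}: the surjection $\square\maps F\mathrm{Cospan} \to F\mathrm{Corel}$, now itself justified as the special case of this proposition described above, allows every equation between morphisms of $F\mathrm{Corel}$ to be verified in the already-known hypergraph category $F\mathrm{Cospan}$.
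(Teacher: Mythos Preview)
Your proposal is correct and follows essentially the same strategy as the paper: reduce to showing composition preservation and naturality of the coherence maps via diagram chases driven by monoidality and naturality of $\theta$ (together with monoidality of $G$ and the colimit preservation of $A$), then obtain Theorem~\ref{thm.fcorel} from the special case $A=1$, $(\mc E,\mc M)=(\mc C,\mc I_{\mc C})$ so that the remaining axioms descend from $F\mathrm{Cospan}$. The only ingredient you do not name explicitly is Lemma~\ref{lem.emptydecorations}, which the paper invokes to strip away the empty decorations on the coherence maps $\overline{\kappa}$ before comparing the decorations of $\square f+\square g$ and $\square(f+g)$; this is a convenience rather than an essential idea, and your sketch would arrive at the same point with slightly more bookkeeping.
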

\begin{proof}[of Theorem \ref{thm.fcorel} and Proposition
  \ref{prop.deccorelfunctors}.]
  In the proof of Theorem \ref{thm.cospantocorel} and Proposition
  \ref{prop.corelfunctors} we proved that the map 
  \[
    \square\maps \corel(\mc C) \longrightarrow \corel(\mc C')
  \]
  preserved composition and had natural coherence maps. Specialising to the case
  when $\corel(\mc C)=\cospan(\mc C')$, we saw that this bijective-on-objects,
  surjective-on-morphisms, composition and monoidal product preserving map
  proved $\corel(\mc C')$ is a hypergraph category, and it immediately followed
  that $\square$ is a hypergraph functor.

  The analogous argument holds here: we simply need to prove
  \[
    \square\maps F\mathrm{Corel} \longrightarrow G\mathrm{Corel}
  \]
  preserves composition and has natural coherence maps. Theorem \ref{thm.fcorel}
  then follows from examining the map $F\mathrm{Cospan} \to F\mathrm{Corel}$
  obtained by choosing $\mc C = \mc C'$, $(\mc E,\mc M) = (\mc C', \mc I_{\mc
  C'})$, $F$ the restriction of $G$ to $\mc C'$, $A$ the identity functor on
  $\mc C'$, and $\theta$ the identity natural transformation. Subsequently
  Proposition \ref{prop.deccorelfunctors} follows from noting that all the
  axioms hold for the corresponding maps in $G\mathrm{Cospan}$.
  
  \paragraph{$\square$ preserves composition.} Suppose we have decorated
  corelations
  \[
    f=(X \stackrel{i_X}{\longrightarrow} N \stackrel{o_Y}{\longleftarrow} Y,
    \enspace 1 \stackrel{s}{\to} FN)
    \qquad
    \mbox{and}
    \qquad 
    g=(Y \stackrel{i_Y}{\longrightarrow} M \stackrel{o_Y}{\longleftarrow} Z,
    \enspace 1 \stackrel{t}{\to} FM)
  \]
  We know the functor $\square$ preserves composition on the cospan part; this
  is precisely the content of Proposition \ref{prop.corelfunctors}. It remains to
  check that $\square( g \circ f)$ and $\square g \circ \square f$ have
  isomorphic decorations. This is expressed by the commutativity of the
  following diagram:
  \[
    \xymatrixrowsep{1.1pc}
    \xymatrixcolsep{.6pc}
    \xymatrix{
      \scriptstyle G\overline{A(\overline{N+_YM})} \ar[rrrrrr]^{Gn} &&&&&&
      \scriptstyle G\overline{(\overline{AN}+_{AY}\overline{AM})}\\
      \\
      \scriptstyle GA(\overline{N+_YM}) \ar[uu]^{Gm^\opp_{A(\overline{N+_YM})}} &&&
      \textsc{\tiny($\ast\ast$)} &&& 
      \scriptstyle G(\overline{AN}+_{AY}\overline{AM})
      \ar[uu]_{Gm^\opp_{\overline{AN}+_{AY}\overline{AM}}} \\
      \\
      \scriptstyle F(\overline{N+_YM}) \ar[uu]^{\theta_{\overline{N+_YM}}} &
      &
      \scriptstyle GA(N+_YM) \ar[uull]_{GAm^\opp_{N+_YM}} && 
      \scriptstyle G(AN+_{AY}AM) \ar[ll]_{G\sim} \ar[uurr]^{G(m_{AN}^\opp
      +_{AY}m_{AM}^\opp)\phantom{spac}}
      & \textsc{\tiny(c)} & 
      \scriptstyle G(\overline{AN}+\overline{AM})
      \ar[uu]_{G[j_{\overline{AN}},j_{\overline{AM}}]} 
      \\
      &\textsc{\tiny(tn)}&& \textsc{\tiny(a)} 
      \\
      \scriptstyle F(N+_YM) \ar[uu]^{Fm_{N+_YM}^\opp}
      &&
      \scriptstyle GA(N+M)\ar[uu]_{GA[j_N,j_M]} && 
      \scriptstyle G(AN+AM) \ar[uu]^{G[j_{AN},j_{AM}]} \ar[ll]^{G\alpha_{N,M}}
      \ar[uurr]^(.6){G(m_{AN}^\opp +m_{AM}^\opp)\phantom{s}} & \textsc{\tiny(gm)} & 
      \scriptstyle G\overline{AN} \times G\overline{AM}
      \ar[uu]_{\gamma_{\overline{AN},\overline{AM}}} \\
      \\
      \scriptstyle F(N+M) \ar[uu]^{F[j_N,j_M]} \ar[uurr]_{\theta_{N+M}} &&&
      \textsc{\tiny(tm)} &&& 
      \scriptstyle GAN \times GAM \ar[uu]_{Gm_{AN}^\opp \times Gm_{AM}^\opp}
      \ar[uull]^{\gamma_{AN,AM}} \\
      \\
      &&&
      \scriptstyle FN \times FM \ar[uulll]^{\varphi_{N,M}}
      \ar[uurrr]_{\theta_N \times \theta_M} \\\\
      &&&
      \scriptstyle 1 \ar[uu]_{\rho_1\circ (s \times t)}
    }
  \]
  This diagram does indeed commute. To check this, first observe that \textsc{(tm)}
  commutes by the monoidality of $\theta$, \textsc{(gm)} commutes by the
  monoidality of $G$, and \textsc{(tn)} commutes by the naturality of $\theta$.
  The remaining three diagrams commute as they are $G$-images of diagrams that
  commute in $\mc C';\mc M'^\opp$. Indeed, \textsc{(a)} commutes since $A$ preserves
  colimits and $G$ is functorial, \textsc{(c)} commutes as it is the $G$-image
  of a pushout square in $\mc C'$, so 
  \[
    \xleftarrow{m_{AN}+m_{AM}}
    \xrightarrow{[j_{\overline{AN}},j_{\overline{AM}}]}
    \quad 
    \textrm{and}
    \quad
    \xrightarrow{[j_{AN},j_{AM}]}
    \xleftarrow{m_{AN}+_{AY}m_{AM}} 
  \]
  are equal as morphisms of $\mc C';\mc M'^\opp$, and \textsc{($\ast\ast$)}
  commutes as it is the $G$-image of the right-hand subdiagram of
  (\ref{diag.eparts}) used to define $n$ in the proof of Lemma
  \ref{lem.corelfuncomposition}.

  \paragraph{Coherence maps are natural.}
  Let $f = (X \longrightarrow N \longleftarrow Y, \enspace 1 \to FN)$, $g= (Z
  \longrightarrow M \longleftarrow W, \enspace 1 \to FM)$ be $F$-decorated
  corelations in $\mc C$. We wish to show that
  \[
    \xymatrixcolsep{4pc}
    \xymatrixrowsep{2pc}
    \xymatrix{
      AX+AY \ar[r]^{\square f+\square g}
      \ar[d]_{\overline{\kappa_{X,Y}}} & 
      AZ+AW \ar[d]^{\overline{\kappa_{Z,W}}} \\
      A(X+Y) \ar[r]^{\square(f+g)} & A(Z+W)
    }
  \]
  commutes in $G\mathrm{Corel}$, where the coherence maps are given by
  \[
    \overline{\kappa_{X,Y}}=          
    \left(
    \begin{aligned}
      \xymatrix{
	& \overline{A(X+Y)} \\  
	AX+AY \ar[ur] && A(X+Y) \ar[ul]
      }
    \end{aligned}
    ,
    \qquad
    \begin{aligned}
      \xymatrixrowsep{1.4ex}
      \xymatrix{
	G(\overline{A(X+Y)}) \\
	GA(X+Y) \ar[u]_{Gm_{AX+AY}^\opp} \\
	G\varnothing \ar[u]_{G!} \\
	1 \ar[u]_{\gamma_1}
      }
    \end{aligned}
    \right).
  \]
  Lemma \ref{lem.corelfunmonoidal} shows that the composites of corelations
  agree. It remains to check that the decorations also agree.

  Here Lemma \ref{lem.emptydecorations} is helpful. Since $\square$ is
  composition preserving, we can replace the $\overline{\kappa}$ with the empty
  decorated coherence maps $\kappa$ of $G\mathrm{Cospan}$, and compute these
  composites in $G\mathrm{Cospan}$, before restricting to the $\mc E'$-parts.
  Lemma \ref{lem.emptydecorations} then implies that the restricted empty
  decorations on the isomorphisms $\overline{\kappa}$ play no role in
  determining the composite decorations. It is thus enough to prove that the
  decorations of $\square f + \square g$ and $\square(f+g)$ are the same up to
  the isomorphism $p\maps G(\overline{AN} +\overline{AM}) \to
  G\overline{A(N+M)}$ between their apices, as defined in the diagram
  (\ref{diag.natural}) in the proof of Lemma \ref{lem.corelfunmonoidal}.

  This comes down to proving the following diagram commutes:
  \[
    \xymatrixrowsep{.8pc}
    \xymatrixcolsep{.8pc}
    \xymatrix{
      &&&& 
      GAN \times GAM \ar[rrdd]^{\gamma} \ar[rr]^{Gm \times Gm} && 
      G\overline{AN} \times G\overline{AM} \ar[rr]^{\gamma} && 
      G(\overline{AN}+\overline{AM}) \ar[dddd]^{Gp}_\sim \\ 
      &&&&&& 
      \textrm{\tiny(G)} \\
      1 \ar[rr]^(.4){\langle s,t \rangle} && 
      FN\times FM \ar[uurr]^{\theta} \ar[ddrr]_{\varphi} && 
      \textrm{\tiny(T)} && 
      G(AN+AM) \ar[uurr]^{G(m+m)} \ar[dd]_{G\kappa} \\
      &&&&&&& 
      \textrm{\tiny(\#\#)}\\ 
      &&&& 
      F(N+M) \ar[rr]_{\theta} && 
      GA(N+M) \ar[rr]_{Gm} && 
      G\overline{A(N+M)}
    }
  \]
  This is straightforward to check: (T) commutes by the monoidality of $\theta$,
  (G) by the monoidality of $G$, and (\#\#) as it is the $G$-image of the
  rightmost square in (\ref{diag.natural}).
\end{proof}

In particular, we get a hypergraph functor from the category of $F$-decorated
cospans to the category of $F$-decorated corelations. In applications, this
is often the key aspect of constructing `black box' or semantic functors.


\begin{corollary}
  Let $\mathcal C$ be a category with finite colimits, and let $(\mathcal E,
  \mathcal M)$ be a factorisation system on $\mathcal C$. Suppose that we also
  have a lax monoidal functor
  \[
    F: (\mathcal C;\mathcal M^\opp,+) \longrightarrow (\Set, \times).
  \]
  Then we may define a category $F\mathrm{Corel}$ with objects the objects of
  $\mathcal C$ and morphisms isomorpism classes of $F$-decorated corelations.

  Write also $F$ for the restriction of $F$ to the wide subcategory $\mathcal
  C$ of $\mathcal C;\mathcal M^\opp$. We can thus also obtain the category
  $F\mathrm{Cospan}$ of
  $F$-decorated cospans. We moreover have a functor 
  \[
    F\mathrm{Cospan} \to F\mathrm{Corel}
  \]
  which takes each object of $F\mathrm{Cospan}$ to itself as an object of
  $F\mathrm{Corel}$, and each decorated cospan
  \[
    \left(
    \begin{aligned}
      \xymatrix{
	& N \\  
	X \ar[ur]^{i} && Y \ar[ul]_{o}
      }
    \end{aligned}
    ,
    \qquad
    \begin{aligned}
      \xymatrix{
	FN \\
	1 \ar[u]_{s}
      }
    \end{aligned}
    \right)
  \]  
  to its jointly $\mathcal E$-part
  \[
    \xymatrix{
      & \overline{N} \\  
      X \ar[ur]^{e\circ \iota_X} && Y \ar[ul]_{e\circ\iota_Y}
    }
  \]
  decorated by the composite
  \[
    \xymatrix{
      1 \ar[r]^s & FN \ar[r]^{Fm_N^\opp} & F\overline{N}.
    }
  \]
\end{corollary}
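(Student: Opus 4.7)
The plan is to deduce this corollary directly from Proposition \ref{prop.deccorelfunctors} by specialising its data. Taking $\mc C' = \mc C$ and equipping the codomain side with the given costable factorisation system $(\mc E, \mc M)$, I would equip the domain side with the trivial costable factorisation system $(\mc C, \mc I_{\mc C})$. As noted in Example \ref{ex.factsysts} and in the examples following Lemma \ref{lem.madjointsfunctor}, the category $\mc C;\mc I_{\mc C}^\opp$ is isomorphic to $\mc C$, every cospan is jointly $\mc C$-like, and so the decorated corelation category built from a lax symmetric monoidal functor $(\mc C,+) \to (\Set,\times)$ coincides with the decorated cospan category $F\mathrm{Cospan}$ from Proposition \ref{thm:fcospans}. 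Thus the inclusion $\mc C \cong \mc C;\mc I_{\mc C}^\opp \hookrightarrow \mc C;\mc M^\opp$ restricts $F$ to the functor needed to build the domain $F\mathrm{Cospan}$.

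Next I would take $A = 1_{\mc C}$. Since every factorisation system contains all isomorphisms, $A(\mc I_{\mc C}) \subseteq \mc M$, and the hypothesis of Proposition \ref{prop.deccorelfunctors} is satisfied. The extension of $A$ to a symmetric monoidal functor $\mc C;\mc I_{\mc C}^\opp \to \mc C;\mc M^\opp$ provided by Lemma \ref{lem.madjointsfunctor} is simply the inclusion. The required monoidal natural transformation $\theta$ can then be taken to be the identity, because both the domain functor $F_0 \maps \mc C;\mc I_{\mc C}^\opp \to \Set$ and the composite $F \circ A$ are literally equal to $F$ restricted along this inclusion. Monoidality of the identity natural transformation is automatic.

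With these choices, Proposition \ref{prop.deccorelfunctors} produces a hypergraph functor $T \maps F\mathrm{Cospan} \to F\mathrm{Corel}$. Unpacking its action on morphisms with $A = 1$ and $\theta = \mathrm{id}$, the decorated cospan $(X \to N \leftarrow Y, \enspace 1 \xrightarrow{s} FN)$ is sent to the jointly $\mc E$-part $(X \to \overline{N} \leftarrow Y, \enspace 1 \xrightarrow{s} FN \xrightarrow{Fm_N^\opp} F\overline{N})$, which is exactly the assignment in the statement. The coherence maps, being images of isomorphisms in $\mc C$ with restricted empty decoration, match those inherited from $\cospan(\mc C)$.

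I do not anticipate any genuine obstacle: all the hard work has already been carried out in Proposition \ref{prop.deccorelfunctors}. The only minor verifications are (i) that the special case $(\mc C, \mc I_{\mc C})$ really does identify $F\mathrm{Corel}$ with $F\mathrm{Cospan}$, and (ii) that the identity is a valid monoidal natural transformation between the two equal composites $F_0$ and $F \circ A$; both are immediate from the definitions.
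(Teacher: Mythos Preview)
Your proposal is correct and is essentially identical to the paper's own approach: the paper derives this corollary (and indeed Theorem~\ref{thm.fcorel} itself) by specialising Proposition~\ref{prop.deccorelfunctors} to $\mc C' = \mc C$, the trivial factorisation system $(\mc C,\mc I_{\mc C})$ on the domain side, $A = 1_{\mc C}$, and $\theta$ the identity natural transformation. Your verification that these choices satisfy the hypotheses and yield the stated functor is exactly what is needed.
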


\section{Examples} \label{sec.excor}
We give two extended examples. Our first example revisits the matrix
example from the introduction, having now developed the material necessary to
formalise it. Our second example is to give two constructions for the category
of linear relations: first as a corelation category, then as a decorated
corelation category.

\subsection{Matrices} \label{ssec.matrices} \hfill

\noindent
Let $R$ be a commutative rig.\footnote{Also known as a semiring, a rig is a ring
  without \emph{n}egatives.} In this subsection we will
construct matrices over $R$ as decorated corelations over $\FinSet^\opp$. 

In $\FinSet^\opp$ the coproduct is the cartesian product $\times$ of sets, the
initial object is the one element set $1$, and cospans are spans in $\FinSet$.
The notation will thus be less confusing if we talk of decorated spans on
$(\FinSet,\times)$ given by the contravariant lax monoidal functor
\begin{align*}
  R^{(-)}: (\mathrm{FinSet},\times) &\longrightarrow (\Set,\times); \\
  N &\longmapsto R^N \\
  \Big(f\maps N \to M\Big) &\longmapsto \Big(R^f\maps R^M \to R^N; v \mapsto v \circ
  f\big).
\end{align*}
The coherence maps $\varphi_{N,M}\maps R^N \times R^M \to R^{N\times M}$ take a
pair $(s,t)$ of maps $s\maps N \to R$, $t\maps M \to R$ to the pointwise product
$s\cdot t \maps N\times M \to R; (n,m) \mapsto s(n) \cdot t(m)$. The unit
coherence map $\varphi_1\maps 1 \to R^1$ sounds almost tautological: it takes
the unique element of the one element set $1$ to the function $1 \to R$ that
maps the unique element of the one element set to the multiplicative identity
$1_R$ of the rig $R$.  As described in the introduction, 
$R^{(-)}\mathrm{Cospan}$ can be considered as the category of `multivalued
matrices' over $R$, and $R^{(-)}\mathrm{Corel}$ the category of matrices over
$R$.

Just as the coherence map $\varphi_1$ gives the unit for the multiplication, it
is the coherence maps $\varphi_{N,M}$ that enact multiplication of scalars:
the composite of decorated spans $(X \xleftarrow{i_X} N \xrightarrow{o_Y} Y,\enspace N
\xrightarrow{s} R)$ and $(Y \xleftarrow{i_Y} M \xrightarrow{o_Z} Z,\enspace M
\xrightarrow{t} R)$ is the span $X \leftarrow N\times_YM \rightarrow Z$ decorated
by the map
\[
  N \times_YM \hooklongrightarrow N \times M \xrightarrow{\varphi_{N,M}(s,t) = s \cdot t}
  R,
\]
where the inclusion from $N \times_YM$ into $N \times M$ is that given by the
categorical product. The intuition for this composition rule, in terms of
channels between elements of $X$ and those of $Z$, was discussed in the
introduction.

As $\varphi_1$ selects the multiplicative unit $1_R$ of $R$, the empty
decoration on any set $N$ is the function that sends every element of $N$ to
$1_R$. This implies the identity decorated span on $X = \{x_1,\dots, x_n\}$ is
that represented by the diagram
\[
    \tikzset{every path/.style={line width=.8pt}}
\begin{tikzpicture}
	\begin{pgfonlayer}{nodelayer}
		\node [style=sdot] (0) at (1.75, 0.5) {};
		\node [style=sdot] (1) at (-1.75, -1) {};
		\node [style=sdot] (2) at (1.75, -1) {};
		\node [style=none] (3) at (0, -0.25) {$\vdots$};
		\node [style=amp] (4) at (0, -1) {$1$};
		\node [style=amp] (5) at (0, 1) {$1$};
		\node [style=sdot] (6) at (1.75, 1) {};
		\node [style=amp] (7) at (0, 0.5) {$1$};
		\node [style=sdot] (8) at (-1.75, 0.5) {};
		\node [style=sdot] (9) at (-1.75, 1) {};
		\node [style=none] (10) at (1.75, -0.25) {$\vdots$};
		\node [style=none] (11) at (-1.75, -0.25) {$\vdots$};
		\node [style=none] (12) at (-2.125, 1) {$x_1$};
		\node [style=none] (13) at (-2.125, 0.5) {$x_2$};
		\node [style=none] (14) at (-2.125, -1) {$x_n$};
		\node [style=none] (15) at (2.125, 1) {$x_1$};
		\node [style=none] (16) at (2.125, 0.5) {$x_2$};
		\node [style=none] (17) at (2.125, -1) {$x_n$};
	\end{pgfonlayer}
	\begin{pgfonlayer}{edgelayer}
		\draw (6) to (5);
		\draw (4) to (2);
		\draw (1) to (4);
		\draw (5) to (9);
		\draw (8) to (7);
		\draw (7) to (0);
	\end{pgfonlayer}
\end{tikzpicture}
\]
while the Frobenius multiplication and unit are 
\[
    \tikzset{every path/.style={line width=.8pt}}
  \begin{aligned}
\begin{tikzpicture}
	\begin{pgfonlayer}{nodelayer}
		\node [style=sdot] (0) at (1.75, -0.5) {};
		\node [style=sdot] (1) at (1.75, -2.25) {};
		\node [style=none] (2) at (0, -1.25) {$\vdots$};
		\node [style=amp] (3) at (0, -2.25) {$1$};
		\node [style=amp] (4) at (0, 1.5) {$1$};
		\node [style=sdot] (5) at (1.75, 1.5) {};
		\node [style=amp] (6) at (0, -0.5) {$1$};
		\node [style=sdot] (7) at (-1.75, -0.5) {};
		\node [style=none] (8) at (1.75, -1.25) {$\vdots$};
		\node [style=none] (9) at (-1.75, -1.25) {$\vdots$};
		\node [style=sdot] (10) at (-1.75, -2.25) {};
		\node [style=sdot] (11) at (-1.75, 1.5) {};
		\node [style=none] (12) at (2.125, 1.5) {$x_1$};
		\node [style=none] (13) at (2.125, -0.5) {$x_2$};
		\node [style=none] (14) at (2.125, -2.25) {$x_n$};
		\node [style=none] (15) at (-2.5, 1.5) {$(x_1,x_1)$};
		\node [style=none] (16) at (-2.5, -0.5) {$(x_2,x_2)$};
		\node [style=none] (17) at (-2.5, -2.25) {$(x_n,x_n)$};
		\node [style=none] (18) at (-2.5, 1) {$(x_1,x_2)$};
		\node [style=sdot] (19) at (-1.75, 1) {};
		\node [style=none] (21) at (-1.75, 0.5) {$\vdots$};
		\node [style=none] (22) at (1.75, 0.75) {$\vdots$};
		\node [style=sdot] (23) at (-1.75, -0) {};
		\node [style=none] (24) at (-2.5, -0) {$(x_2,x_1)$};
		\node [style=sdot] (25) at (-1.75, -1.75) {};
		\node [style=none] (26) at (-2.7, -1.75) {$(x_n,x_{n-1})$};
	\end{pgfonlayer}
	\begin{pgfonlayer}{edgelayer}
		\draw (5) to (4);
		\draw (3) to (1);
		\draw (7) to (6);
		\draw (6) to (0);
		\draw (10) to (3);
		\draw (4) to (11);
	\end{pgfonlayer}
\end{tikzpicture}
  \end{aligned}
  \qquad \mbox{and} \qquad
  \begin{aligned}
\begin{tikzpicture}
	\begin{pgfonlayer}{nodelayer}
		\node [style=sdot] (0) at (1.75, 0.5) {};
		\node [style=sdot] (1) at (1.75, -1) {};
		\node [style=none] (2) at (0, -0.25) {$\vdots$};
		\node [style=amp] (3) at (0, -1) {$1$};
		\node [style=amp] (4) at (0, 1) {$1$};
		\node [style=sdot] (5) at (1.75, 1) {};
		\node [style=amp] (6) at (0, 0.5) {$1$};
		\node [style=sdot] (7) at (-1.75, -0) {};
		\node [style=none] (8) at (1.75, -0.25) {$\vdots$};
		\node [style=none] (9) at (2.125, 1) {$x_1$};
		\node [style=none] (10) at (2.125, 0.5) {$x_2$};
		\node [style=none] (11) at (2.125, -1) {$x_n$};
	\end{pgfonlayer}
	\begin{pgfonlayer}{edgelayer}
		\draw (5) to (4);
		\draw (3) to (1);
		\draw (7) to (6);
		\draw (6) to (0);
		\draw (7) to (4);
		\draw (7) to (3);
	\end{pgfonlayer}
\end{tikzpicture}
  \end{aligned}
\]
respectively, with the comultiplication and counit the mirror images.

These morphisms are multivalued matrices in the following sense: the
cardinalities of the domain $X$ and the codomain $Y$ give the dimensions of the
matrix, and the apex $N$ indexes its entries. If $n \in N$ maps to $x \in X$ and
$y \in Y$, we say there is an entry of value $s(n) \in \R$ in the $x$th row and
$y$th column of the matrix. It is multivalued in the sense that there may be
multiple entries in any position $(x,y)$ of the matrix.

To construct matrices proper, and not just multivalued matrices, as decorated
relations, we extend $R^{(-)}$ to the contravariant functor
\[
  R^{(-)}: (\mathrm{Span(FinSet)},\times) \longrightarrow (\Set,\times)
\]
mapping now a span $N \stackrel{f}\leftarrow A \stackrel{g}\to M$ to the
function
\begin{align*}
  R^{f^\opp;g}\maps R^M &\longrightarrow R^N; \\
  v &\longmapsto \Big(n \mapsto \sum_{a \in f^{-1}(n)} v\circ g(a)\Big).
\end{align*}
It is simply a matter of computation to check this is functorial.

Decorated corelations in this category then comprise trivial spans $X
\xleftarrow{\pi_X} X \times Y \xrightarrow{\pi_Y} Y$, where $\pi$ is the
projection given by the categorical product, together with a decoration $X\times
Y \to R$. Such morphisms give a value of $R$ for each pair $(x,y) \in X \times
Y$, and thus are trivially in one-to-one correspondence with $\lvert X \rvert
\times \lvert Y\rvert$-matrices. 

The map $R^{(-)}\mathrm{Cospan} \to R^{(-)}\mathrm{Corel}$ transports the
decoration $N\times_YM \to R$ along the function $N \times_YM \to N \times M$
that identifies elements over the same pair $(x,y)$. In terms of the multivalued
matrices, this sums over (the potentially empty) set of entries over $(x,y)$ to
create a single entry. It is thus easily observed that composition in this
category is matrix multiplication. Moreover, it is not difficult to check that
the monoidal product is the Kroenecker product of matrices, and thus that
$R^{(-)}\mathrm{Corel}$ is monoidally equivalent to the monoidal category of
$(\FinVect, \otimes)$ of finite dimensional vector spaces, linear maps, and the
tensor product.


Note that $R^X$ is always an $R$-module, and $R^f$ a homomorphism of
$R$-modules. Thus we could take decorations here in the category $R\mathrm{Mod}$
of $R$-modules, rather than the category $\Set$. While Proposition
\ref{prop.setdecorations} shows that the resulting decorated cospan and
corelation categories would be isomorphic, this hints at an enriched version of
the theory.

\subsection{Two constructions for linear relations} \hfill

\noindent
We give two constructions for the category of linear relations: first as a
category of epi-mono corelations in the category of linear maps, and second as
isomorphism-morphism corelations in the category sets decorated by linear
subspaces.

Recall that a linear relation $L\maps U \leadsto V$ is a subspace $L \subseteq U
\oplus V$, where $U$, $V$ are vector spaces. We compose linear relations as we
do relations, and vector spaces and linear relations form a category $\LinRel$.
It is straightforward to show that this category can be constructed as the
category of relations in the category $\Vect$ of vector spaces and linear maps
with respect to epi-mono factorisations: monos in $\Vect$ are simply injective
linear maps, and hence subspace inclusions. We show that they may also be
constructed as corelations in $\Vect$ with respect to epi-mono factorisations.

If we restrict to the full subcategory $\FinVect$ of finite dimensional vector
spaces duality makes this easy to see: after picking a basis for each vector
space the transpose yields an equivalence of $\FinVect$ with its opposite
category, so the category of $(\mathcal E,\mathcal M)$-corelations (jointly epic
cospans) is isomorphic to the category of $(\mathcal E,\mathcal M)$-relations
(jointly monic spans) in $\FinVect$. This fact has been fundamental in work on
finite dimensional linear systems and signal flow diagrams \cite{BE,BSZ,BSZ2}.

We prove the general case in detail. To begin, note $\mathrm{Vect}$ has an
epi-mono factorisation system with monos stable under pushouts. This
factorisation system is inherited from $\Set$: the epimorphisms in $\Vect$ are
precisely the surjective linear maps, the monomorphisms are the injective
linear maps, and the image of a linear map is always a subspace of the
codomain, and so itself a vector space. Monos are stable under pushout as the
pushout of a diagram $V \xleftarrow{f} U \xrightarrow{m} W$ is $V \oplus
W/\im[f\; -m]$. The map $m'\maps V \to V \oplus W/\im[f\; -m]$ into the pushout
has kernel $f(\ker m)$. Thus when $m$ is a monomorphism, $m'$ is too.

Thus we have a category of corelations $\corel(\Vect)$. We show that the map
$\corel(\Vect) \to \LinRel$ sending each vector space to itself and each
corelation
\[
  U \stackrel{f}\longrightarrow A \stackrel{g}\longleftarrow V
\]
to the linear subspace $\ker[f\;-g]$ is a full, faithful, and
bijective-on-objects functor.

Indeed, corelations $U \xrightarrow{f} A \xleftarrow{g} V$ are in one-to-one
correspondence with surjective linear maps $U\oplus V \to A$, which are in
turn, by the isomorphism theorem, in one-to-one correspondence with subspaces
of $U\oplus V$. These correspondences are described by the kernel construction
above. Thus our map is evidently full, faithful, and bijective-on-objects. It
also maps identities to identities. It remains to check that it preserves
composition.

Suppose we have corelations $U \xrightarrow{f} A \xleftarrow{g} V$
and $V \xrightarrow{h} B \xleftarrow{\ell} W$. Then their pushout is given by
$P=A \oplus B/\im[g\;-h]$, and we may draw the pushout diagram
\[
  \xymatrix{
    U \ar[dr]_{f} & & V \ar[dl]^{g}  
    \ar[dr]_{h} & & W \ar[dl]^{\ell} {} 
    \\
    & A \ar[dr]_{\iota_A} & & B \ar[dl]^{\iota_B}  \\
    & & P {\save*!<0cm,-.5cm>[dl]@^{|-}\restore}
  }
\]
We wish to show the equality of relations
\[
  \ker[f\;-g];\ker[h\;-\ell] = \ker[\iota_A f\; -\iota_B g].
\]
Now $(\mathbf{u},\mathbf{w}) \in U \oplus W$ lies in the composite relation
$\ker[f\;-g];\ker[h\;-\ell]$ if and only if there exists $\mathbf{v} \in V$ such that
$f\mathbf{u} = g\mathbf{v}$ and $h\mathbf{v} = \ell\mathbf{w}$. But as $P$ is the
pushout, this is true if and only if 
\[
  \iota_A f \mathbf{u} = \iota_A g \mathbf{v} = \iota_B h \mathbf{v} =
  \iota_B \ell \mathbf{w}.
\]
This in turn is true if and only if $(\mathbf{u}, \mathbf{w}) \in \ker[\iota_Af\;
-\iota_B\ell]$, as required. 

This corelational perspective is important as it fits the relational picture
into our philosophy of black boxing. Work by Baez and Erbele, and Bonchi,
Soboci\'nski and Zanasi shows that $\LinRel$ models controllable linear
time-invariant dynamical systems \cite{BE,BSZ,BSZ2}. In \cite{FRS16}, however,
it is shown that it is the construction of $\LinRel$ as corelations, rather than
relations, that correctly generalises to the case of non-controllable systems.
\bigskip

Finally, we give a decorated corelations costruction for $\LinRel$. For
simplicity, we consider just the finite dimensional case. That is, let $\LinRel$
be the category with finite dimensional $k$-vector spaces as objects, and linear
relations as morphisms. We have just seen that this category is a hypergraph
category. Since $\cospan(\FinSet)$ is the theory of special commutative
Frobenius monoids \cite{Lac04}, there exists a hypergraph functor
$\cospan(\FinSet) \to \LinRel$ sending the finite set $1$ to the 1-dimensional
vector space $k$. Also, it is straightforward to check that the covariant hom
functor on the monoidal unit of a symmetric monoidal category is a lax symmetric
monoidal functor; we thus get a functor $\LinRel(0,-)\maps \LinRel \to \Set$. 

Composing these, we have a lax symmetric monoidal functor
\[
  \mathrm{Lin} \maps \big(\mathrm{Cospan}(\mathrm{FinSet}),+\big)
  \longrightarrow \big(\mathrm{Set},\times\big).
\]
This functor takes a finite set $N$ to the set $\mathrm{Lin}(N)$ of linear subspaces of the
vector space $k^N$. Moreover, the image $\mathrm{Lin}(f)$ of a function $f\maps
N \to M$ maps a subspace $L \subseteq k^N$ to $\{v \mid v\circ f \in L\}
\subseteq k^M$, while the image $\mathrm{Lin}(f^\opp)$ of an opposite function
$g^\opp: N \to M$ maps a subspace $L \subseteq k^N$ to $\{v = u \circ g \mid u
\in L\} \subseteq k^M$. 

We thus get a decorated cospan category $\mathrm{LinCospan}$, and a decorated
corelation category $\mathrm{LinCorel}$. The former, $\mathrm{LinCospan}$, has
as morphisms cospans $X \to N \leftarrow Y$ of finite sets decorated by a
subspace of $k^N$. For the latter, note that we take corelations with respect to
the isomorphism-morphism factorisation system $(\mc I_{\FinSet},\FinSet)$. This
means that there is a unique corelation between any two objects; a
representative is simply the cospan $X \to X+Y \leftarrow Y$ given by the
coproduct inclusions. Thus morphisms from $X$ to $Y$ in $\mathrm{LinCorel}$ are
simply subspaces of $k^{X+Y} \cong k^X \oplus k^Y$---that is, linear relations
$k^X \leadsto k^Y$. It is straightforward to check that composition in
$\mathrm{LinCorel}$ is simply relational composition. Thus we have given a
decorated corelation construction for $\LinRel$.

In fact, this method of arriving at a decorated corelation construction applies
to any hypergraph category. The existence of a decorated corelation construction
is useful for the construction of hypergraph functors to and from the category:
it allows such functors to be constructed as decorated corelation functors, and
hence by exhibiting certain natural transformations.

In this particular case, the decorated corelation construction for linear
relations is useful for solving the problem alluded to in the introduction:
constructing semantic functors for electric circuits. Recall that open circuits
themselves have a readily available decorated cospan construction using the
functor $\mathrm{Circ}\maps \FinSet \to \Set$ that maps a finite set $N$ to the
set of circuit diagrams on $N$. Constructing a hypergraph functor from the
resulting decorated cospan category of circuit diagrams to $\LinRel$ is then
simply a matter of finding a monoidal natural transformation from
$\mathrm{Circ}$ to $\mathrm{Lin}\circ \gamma$, where $\gamma\maps \FinSet \to
\cospan(\FinSet)$ is the standard inclusion. This is explored in depth in
\cite{BF,Fon16}.

%
%
%

\appendix
\section{Appendix} \label{sec:proofs}

\paragraph{Decorations in $\Set$ are general.} \label{ssec.setdecorations}

The following observation is due to Sam Staton.

\begin{proposition} \label{prop.setdecorations}
  Let $F\maps (\mathcal C, +) \to (\mathcal D, \otimes)$ be a braided lax monoidal
  functor. Write $\mathcal D(I,-)\maps (\mathcal D, \otimes) \to (\Set,
  \times)$ for the hom functor taking each object $X \in \mathcal
  D$ to the homset $\mathcal D(I,X)$. Then $F\mathrm{Cospan}$ and $\mathcal
  D(I,F-)\mathrm{Cospan}$ are isomorphic as hypergraph categories.
\end{proposition}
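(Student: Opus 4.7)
The plan is to exploit the tautological identification of morphisms $s\maps I \to FN$ in $\mc D$ with elements of the set $\mc D(I,FN)$. This gives an identity-on-objects, bijective-on-underlying-cospans map $F\mathrm{Cospan} \to \mc D(I,F-)\mathrm{Cospan}$ sending each decorated cospan $(X \to N \leftarrow Y,\enspace I \xrightarrow{s} FN)$ to the same cospan decorated by $s$ now regarded as an element of $\mc D(I,FN)$. The content of the proposition is then that this bijection is a hypergraph functor.

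I would begin by observing that the covariant hom $\mc D(I,-)\maps (\mc D,\ot) \to (\Set,\times)$ is canonically lax symmetric monoidal, with coherence
\[
  \varphi^{\mc D(I,-)}_{X,Y}(s,t) \enspace = \enspace \bigl(I \xrightarrow{\lambda_I^{-1}} I\ot I \xrightarrow{s \ot t} X \ot Y\bigr),
\]
and unit $1 \to \mc D(I,I)$ picking out $1_I$; symmetry follows from that of the braiding on $\mc D$. Composing with $F$ yields a lax symmetric monoidal functor $\mc D(I,F-)\maps (\mc C,+) \to (\Set,\times)$ whose coherence maps factor as $\varphi^{\mc D(I,F-)}_{N,M}(s,t) = \varphi^F_{N,M} \circ (s \ot t) \circ \lambda_I^{-1}$, so Definition~\ref{def:fcospans} applies to produce $\mc D(I,F-)\mathrm{Cospan}$.

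Next I would check the hypergraph axioms by direct comparison of composition rules. The composite decoration in $F\mathrm{Cospan}$ of $s\maps I \to FN$ and $t\maps I \to FM$ over the pushout apex is by definition the morphism
\[
  F[j_N,j_M] \circ \varphi^F_{N,M} \circ (s \ot t) \circ \lambda_I^{-1}\maps I \longrightarrow F(N+_YM),
\]
which is exactly the value at $(s,t)$ of $\mc D(I,F[j_N,j_M]) \circ \varphi^{\mc D(I,F-)}_{N,M}$, i.e., the composite decoration in $\mc D(I,F-)\mathrm{Cospan}$. The same unwinding shows that the empty decoration $F!\circ\varphi^F_1$ in $F\mathrm{Cospan}$ corresponds to the empty decoration in $\mc D(I,F-)\mathrm{Cospan}$. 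Since the coherence and hypergraph maps of both categories come from $\cospan(\mc C)$ with empty decoration, the bijection is a hypergraph isomorphism.

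The main obstacle, such as it is, lies in the previous step: bookkeeping the interplay of $\lambda_I^{-1}$ with the coherences of $\mc D(I,-)$ so as to see that the two composite decorations literally coincide as morphisms in $\mc D$. Everything else is a routine unpacking of the definitions of $F\mathrm{Cospan}$ and of lax monoidal composition.
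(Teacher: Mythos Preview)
Your argument is correct and essentially self-contained: you directly unwind the coherence of $\mc D(I,-)$ and verify that the two composition rules on decorations literally coincide, then observe that empty decorations and hence all the structural (coherence, Frobenius) maps match.

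The paper takes a slightly more conceptual route. Rather than computing the composite decorations by hand, it observes that the triangle
\[
  \xymatrixrowsep{2ex}
  \xymatrix{
    && (\mc D,\otimes) \ar[dd]^{\mc D(I,-)} \\
    (\mc C,+) \ar[urr]^{F} \ar[drr]_{\mc D(I,F-)} \\
    && (\Set,\times)
  }
\]
commutes on the nose as a diagram of braided lax monoidal functors, and then invokes the general functoriality theorem for decorated cospans (Theorem~4.1 of \cite{Fon15}): such a triangle, read as an identity monoidal natural transformation, yields a strict hypergraph functor $F\mathrm{Cospan} \to \mc D(I,F-)\mathrm{Cospan}$. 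One then only needs to remark that this functor is bijective-on-objects, full, and faithful. Your approach trades this appeal to existing machinery for an explicit check; the paper's buys brevity by reusing the decorated-cospan functor construction already in hand. The underlying verification---that the coherence of $\mc D(I,-)$ composed with that of $F$ reproduces the $F\mathrm{Cospan}$ composition rule---is the same either way.
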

\begin{proof}
  Note that the hom functor from the monoidal unit is always lax braided
  monoidal. We have the commutative-by-definition triangle of braided lax
  monoidal functors
  \[
    \xymatrixrowsep{2ex}
    \xymatrix{
      && (\mathcal D,\otimes) \ar[dd]^{\mathcal D(I,-)} \\
      (\mathcal C,+) \ar[urr]^{F} \ar[drr]_{\mathcal D(I,F-)} \\
      && (\Set, \times)
    }
  \]
  By Theorem 4.1 of \cite{Fon15}, this gives rise to a strict hypergraph functor
  $F\mathrm{Cospan} \to \mathcal D(I,F-)\mathrm{Cospan}$. It is easily
  checked that this functor is bijective-on-objects, full, and faithful.
\end{proof}

\end{document}